\newcommand{\R}{\mathbb{R}}
\newcommand{\N}{\mathbb{N}}
\newcommand{\Q}{\mathbb{Q}}
\newcommand{\Z}{\mathbb{Z}}
\newcommand{\T}{\mathbb{T}}
\newcommand{\Lim}{\varprojlim} 
\newcommand{\OP}{\Omega_{\Phi}}
\newcommand{\larr}{\left( \begin{array}{c}}
\newcommand{\rarr}{\end{array} \right) }
\newcommand{\lsqarr}{\left[ \begin{array}{c}}
\newcommand{\rsqarr}{\end{array} \right]}
\newcommand{\seabox}{{\framebox{$\searrow$}}}
\newcommand{\neabox}{{\framebox{$\nearrow$}}}
\newcommand{\swabox}{{\framebox{$\swarrow$}}}
\newcommand{\nwabox}{{\framebox{$\nwarrow$}}}
\newcommand{\inv}{\varprojlim}
\newtheorem{theorem}{Theorem}
\newtheorem{definition}{Definition}
\newtheorem{lemma}{Lemma}
\newtheorem{corollary}{Corollary}
\newtheorem{proposition}{Proposition}
\begin{document}
\title{Asymptotic structure in substitution tiling spaces}
\author{Marcy~Barge and Carl Olimb}

\subjclass[2000]{Primary: 37B50, 54H20,
Secondary: 11R06, 37B10, 55N05, 55N35, 52C23.}
\keywords{tiling space, Pisot substitution, expansive action}

\begin{abstract} Every sufficiently regular space of tilings of $\R^d$ has at least one pair of distinct tilings that are asymptotic under translation in all the directions of some open $(d-1)$-dimensional hemisphere.
If the tiling space comes from a substitution, there is a way of defining a location on such tilings at which
asymptoticity `starts'. This leads to the definition of the {\em branch locus} of the tiling space: this is a subspace of the tiling space, of dimension at most $d-1$, that summarizes the `asymptotic in at least a half-space' behavior in the tiling space. We prove that if a $d$-dimensional self-similar substitution tiling space has a pair of distinct tilings that are asymptotic in a set of directions that contains a closed $(d-1)$-hemisphere in its interior, then the branch locus is a topological invariant of the tiling space. If the tiling space is a 2-dimensional self-similar Pisot substitution tiling space, the branch locus has a description as an inverse limit of an expanding Markov map on a 1-dimensional simplicial complex.
\end{abstract}

\maketitle

\section{Introduction}\label{sec:intro}

Tilings of $\R^d$, and the tiling spaces associated with them, play a fundamental role in recent investigations in number theory, physics, logic, computer science, and dynamical systems. For a given tiling $T$ of $\R^d$, one would like to understand the recurrence properties of the various patterns made by the tiles of $T$. These properties are encoded in the topology of the tiling space $\Omega$
associated with $T$ and in the dynamics of the action of $\R^d$ by translation on $\Omega$. Under standard assumptions on the nature of $T$, the space $\Omega$ is locally homeomorphic with a product of a $d$-dimensional disk and a Cantor set, the arc-components of $\Omega$ coincide with the orbits of the $\R^d$-action, and recurrences of patterns in a tiling correspond to returns under the $\R^d$-action to local neighborhoods.

We are interested here in the {\em asymptotics} of the translation action on tiling spaces. We will put sufficient conditions on the tilings we consider so that the associated tiling spaces are compact metric spaces: under the metric, $d$, two tilings are close if a small translate of one agrees with the other in a large neighborhood of the origin. Tilings $T,T'\in\Omega$ are then asymptotic in direction $v\in \mathbb{S}^{d-1}$ if $d(T-tv,T'-tv)\to 0$ as $t\to\infty$. We prove under rather general hypotheses that there are
$T\ne T'\in\Omega$ that are asymptotic in an entire open hemisphere's worth of directions. To say more, we must restrict a bit.

There are three common procedures for constructing tilings of $\R^d$: cut-and-project; matching rules; and substitutions. With the cut-and-project method, the asymptotic tilings and directions can be immediately deduced from the geometry of the boundary of the defining window (see, for example, \cite{FHK}). Global properties of matching rule tilings are, on the other hand, very difficult to ascertain - unless the rules enforce the sort of hierarchical structure enjoyed by substitution tilings (see \cite{M}, \cite{G-S}). We are thus led to consider substitutions $\Phi$ and their associated tiling spaces $\OP$.

A substitution $\Phi$ induces a self-map, also denoted by $\Phi$, of $\OP$. This will, among other things, provide a way of pinning down a specific location for the start of asymptoticity. For example, in the
one-dimensional ($d=1$) case, there are a finite and non-zero number of asymptotic $\R$-orbits
(at most $n^2$ if there are $n$ distinct tiles - see \cite{BDH}). Each of these orbits contains a unique tiling that is periodic under $\Phi$ and we may reasonably designate these $\Phi$-periodic, $\R$-asymptotic, orbits as the exact locations on their arc-components where asymptoticity starts. Let us say this more precisely. If $T\ne T'\in\Omega$ are such that $d(T-t,T'-t)\to0$ as $t\to\infty$
(or as $t\to-\infty$), then there is a unique $t_0\in\R$ so that, if $T_0:=T-t_0$ and $T'_0:=T'-t_0$, then
$T_0$ and $T'_0$ are $\Phi$-periodic. Moreover, $T_0\notin W^s(T'_0)$ and $T_0-t\in W^s(T'_0-t)$ for all $t>0$ (resp., $t<0$), where $W^s$ denotes the $\Phi$-stable `manifold'. This choice for initial point of asymptoticity is made less arbitrary by the rigidity result of \cite{BS}: if $\OP$ and $\Omega_{\Psi}$ are homeomorphic one-dimensional substitution tiling spaces, then there is a homeomorphism of
$\OP$ with $\Omega_{\Psi}$ that not only conjugates the $\R$-actions (perhaps with rescaling), but also
conjugates some power of $\Phi$ with a power of $\Psi$ and hence takes $\Phi$-periodic orbits to $\Psi$-periodic orbits. Thus, the finite collection of {\em asymptotic pairs} $\{(T_0,T_0')\}$, as above, is a topological invariant for $\OP$. (A complete topological invariant for one-dimensional substitution tiling spaces is derived from asymptotic pairs in \cite{BD1} and their connection with the Matsumoto $K_0$-group is explored in \cite{BDS}.)

In this article we extend the one-dimensional results to higher dimensional substitution tiling spaces.
We will call a pair of tilings $(T,T')\in\OP\times\OP$ a {\em branch pair} if $T\notin W^s(T')$ and there is an open hemisphere $H\subset \mathbb{S}^{d-1}$ so that $T-tv\in W^s(T'-tv)$ for all $v\in H$ and all $t>0$. We will see that such a pair is asymptotic in the directions $H$. (The terminology `branch pair' is inspired by the fact that, if the tiles are polyhedral, then the space $\OP$ is obtained as an inverse limit of a map on a branched manifold. The asymptotic-in-a-half-space pairs are then a ghostly remnant of the branching in the approximating branched manifolds.) The general situation is quite complicated when $d>1$. Members of branch pairs are not necessarily periodic under the substitution and there are typically infinitely many branch pairs. However, there are at most finitely many branch pairs that are asymptotic in a set of directions that contains a given closed hemisphere in its interior, and they are all substitution-periodic (Proposition \ref{periodic branch pairs exist}).
To capture the asymptotic structure of a substitution tiling space, we define the {\em branch locus} (essentially, this consists of all the members of branch pairs, together with all the tilings obtained by translating these members in directions in the boundary of their maximal asymptotic sectors, and all tilings accumulated on by such). We prove (Theorem \ref{topological invariance 1}) that if a self-similar substitution tiling space has a branch pair whose set of asymptotic directions contains a closed hemisphere in its interior, then the branch locus is a topological invariant.

If we restrict further to $2$-dimensional self-similar substitution tiling spaces, the structure of the branch locus becomes manageable: there are only finitely many lines along which branching occurs
(Theorem \ref{finitely many sectors}) and if the substitution is also Pisot, then the branch locus is the inverse limit of an expanding Markov map on a compact $1$-dimensional simplicial complex. In the latter case, it follows from the rigidity result of Kwapisz (\cite{Kwapisz}) that the branch locus is also a topological invariant (Theorem \ref{topological invariance 2}).

It frequently occurs that asymptotic structure in a tiling space can be held responsible for the appearance of certain subgroups in the cohomology of the space. For example, in the $1$-dimensional period doubling
tiling space ($a\mapsto ab, b\mapsto aa$), there is a branch pair $(T,T')$ with $d(T-t,T'-t)\to0$ as $t\to\pm\infty$: this pair contributes a $\Z$ in $H^1$. Similarly, the branch locus of the $2$-dimensional half-hex tiling space (see Example 1) consists of three
tilings, each asymptotic to the other two in all directions: this contributes $\Z^2$ to $H^2$.
For the chair tiling (Example 2), there are two ``tubes" made each of $\R^2$ orbits of four $2$-addic solenoids in the branch locus that contribute $\Z[1/2]^2$ to $H^2$ (the asymptotic structure also is responsible for a $\Z/3\Z$ that, while it doesn't show up in $H^2$ of the full tiling space, is, none-the-less, a topological invariant - see \cite{BDHS}). Calculations along these lines can be found in  the dissertation, \cite{O}, of  the second author, as can preliminary versions of many of the results contained in this article.

The existence of tilings that are asymptotic in a hemisphere's worth of directions is closely related to lack of expansiveness of translation subdynamics.
In \cite{BL}, Boyle and Lind consider subdynamics of expansive $\Z^d$ actions on compact metric spaces. In particular, they show that (for $d>1$) there is always a $d-1$-dimensional subspace of $\R^d$ so that the restriction of the $\Z^d$ action to directions within a bounded distance of the subspace is not expansive. The corresponding notion of expansiveness for the translation action of $\R^d$ on a $d$-dimensional tiling space $\Omega$ would be: the action is {\em transversely expansive} provided there is an $\alpha>0$ so that if $T\ne T'\in\Omega$ are any two tilings that share a tile, then there is a $v\in\R^d$ with $d(T-v,T'-v)\ge\alpha$. Under the assumption of finite local complexity (see Section \ref{sec:background}), the translation action is transversely expansive. Under additional (very mild) hypotheses, Proposition \ref{agree in half-space}
implies that there is always a $d-1$ dimensional subspace of $\R^d$ restricted to which the translation action on $\Omega$ is not transversely expansive. If $d=2$ and $\Omega$ is a self-similar substitution tiling space,
it is a consequence of Theorem \ref{two sectors} that there are at least two independent directions in which translation is not transversely expansive.

In Section \ref{sec:background} we review basic definitions and facts about tiling spaces. Section
\ref{sec:definitions} introduces the terminology related to asymptotic structures and establishes some general results. In Section \ref{2-d} we restrict to the case of $2$-dimensional self-similar substitution tiling spaces.

\section{Background}\label{sec:background}

By a {\em tile} $\tau$ in $\R^d$ we mean an ordered pair $\tau=(spt(\tau),m)$ where $spt(\tau)$, the {\em support} of $\tau$, is a compact subset of $\R^d$ and $m$ is a {\em mark} taken from some finite set of marks. A tile $\tau$ is {\em topologically regular} if $cl(int(spt(\tau)))=spt(\tau)$ and tiles $\tau=(spt(\tau),m)$ and $\sigma$ are {\em translationally equivalent} if there is a $v\in\R^d$ with $\tau+v:=((spt(\tau)+v,m)=\sigma$. In this article all tiles will be assumed to be topologically regular. By the {\em interior of a tile} we will mean the interior of its support: $\mathring{\tau}:= int(spt(\tau))$.

A {\em patch} is a collection of tiles with pairwise disjoint interiors, the {\em support} of a patch $P$, $spt(P)$, is the union of the supports of its constituent tiles, the {\em diameter} of $P$, $diam(P)$, is the diameter of its support, and a {\em tiling} of $\R^d$ is a patch with support $\R^d$. A collection $\Omega$ of tilings of $\R^d$
has {\em translationally finite local complexity} (FLC) if it is the case that for each $R$ there are
only finitely many translational equivalence classes of patches $P\subset T\in\Omega$ with $diam(P)\le R$. Given a tiling $T$, let $B_0[T]:=\{\tau\in T:0\in spt(\tau)\}$, and, for $R>0$, $B_R[T]:=\{\tau\in T:B_R(0)\cap spt(\tau)\ne\emptyset\}$. If $\Omega$ is a collection of tilings of $\R^d$ with FLC, there is a metric $d$ on $\Omega$ with the property: $d(T,T')<\epsilon$ if there are $v,v'\in\R^n$ with $|v|,|v'|<\epsilon/2$ so that $B_{1/\epsilon}[T'-v']=B_{1/\epsilon}[T-v]$. In other words, in this metric two tilings are close if a small translate of one agrees with the other in a large neighborhood of the origin. (See \cite{AP} for details.) We will call a collection $\Omega$ of tilings of $\R^d$ a {\em d-dimensional tiling space} if $\Omega$ has FLC, is closed under translation ($T\in\Omega$ and $v\in\R^d \Rightarrow T-v\in\Omega$), and is compact in the metric $d$. (All tiling spaces in this article are assumed to have FLC, but we will occasionally include the FLC hypothesis for emphasis.) For example, if $T$ is an FLC tiling of $\R^d$, then $\Omega=\{T':T'$ is a tiling of $\R^d$ and every patch of $T'$ is a translate of a patch of $T\}$ is a $d$-dimensional tiling space, called the {\em hull} of $T$ (\cite{AP}).

Suppose that $\mathcal{A}=\{\rho_1,\ldots,\rho_k\}$ is a set of translationally inequivalent tiles (called {\em prototiles}) in $\R^d$ and $\Lambda$ is an expanding linear isomorphism of $\R^d$. A {\em substitution} on $\mathcal{A}$ with expansion $\Lambda$ is a function $\Phi:\mathcal{A}\to\{P:P$ is a patch in $\R^d\}$
with the properties that, for each $i\in\{1,\ldots,k\}$, every tile in $\Phi(\rho_i)$ is a translate of an element of $\mathcal{A}$, and $spt(\Phi(\rho_i))=\Lambda(spt(\rho_i))$. Such a substitution naturally extends to patches whose elements are translates of prototiles by $\Phi(\{\rho_{i(j)}+v_j:j\in J\}):=\cup_{j\in J}(\Phi(\rho_{i(j)})+\Lambda v_j)$. A patch $P$ is {\em allowed} for $\Phi$ if there is an $m\ge1$, an $i\in\{1,\ldots,k\}$, and a $v\in\R^d$, with $P\subset \Phi^m(\rho_i)-v$. The {\em substitution tiling space} associated with
$\Phi$ is the collection $\OP:=\{T:T$ is a tiling of $\R^d$ and every finite patch in $T$ is allowed for $\Phi\}$. If $\Lambda$ is multiplication by the scalar $\lambda>1$, $\OP$ is called {\em self-similar}.

A $d$-dimensional tiling space $\Omega$ is {\em repetitive} if for each patch $P$ with compact support that occurs in some tiling in $\Omega$ there is an $R$ so that for all $T\in\Omega$ and all $x\in\R^d$, there is a $v\in\R^d$ so that $P-v\subset B_R[T-x]$. It is clear that if $\Omega$ is repetitive, then the action of $\R^d$ on $\Omega$ by translation is minimal. The substitution $\Phi$ is {\em primitive} if for each pair $\{\rho_i,\rho_j\}$ of prototiles there is an $n\in\N$ so that a translate of $\rho_i$ occurs in $\Phi^n(\rho_j)$.  If $\Phi$ is primitive then $\OP$ is repetitive.

If the translation action on $\Omega$ is free (i.e., $T-v=T\Rightarrow v=0$), $\Omega$ is said to be {\em non-periodic}. If $\Phi$ is primitive and $\OP$ is FLC and non-periodic then $\OP$ is compact in the metric described above, $\Phi:\OP\to\OP$ is a homeomorphism, and the translation action on $\OP$ is minimal and uniquely ergodic ( \cite{AP}, \cite{Solo}, \cite{sol}).

A real number $\lambda$ is a {\em Pisot number} if it is an algebraic integer and all of its algebraic conjugates lie strictly inside the unit circle. That is, there is a monic integer polynomial $p$, the {\em minimal polynomial} of $\lambda$, that is irreducible over $\Q$, has $\lambda$ as a root, and all other roots of $p$ have absolute value less than 1. The {\em degree} of $\lambda$ is the degree of $p$. A self-similar substitution is a {\em Pisot substitution} if the expansion $\lambda$ for $\Phi$ is a Pisot number.

\section{Definitions and preliminary results}\label{sec:definitions}

Suppose that $\Omega$ is a $d$-dimensional tiling space and $v\in\mathbb{S}^{d-1}$. Tilings
$T,T'\in\Omega$ are {\em asymptotic in direction $v$} provided $d(T-tv,T'-tv)\to 0$ as $t\to\infty$.
Given $S\subset\mathbb{S}^{d-1}$, $T$ and $T'$ are {\em uniformly asymptotic in
directions $S$} provided for each $\epsilon>0$ there is an $R$ so that $d(T-tv,T'-tv)<\epsilon$ for all
$v\in S$ and all $t\ge R$. We'll say that $T$ and $T'$ {\em agree on $X\subset\mathbb{R}^d$} provided
$B_0[T-v]=B_0[T'-v]$ for all $v\in X$.

\begin{proposition}\label{agree in half-space} If $\Omega$ is a non-periodic, repetitive, FLC tiling space
then there are tilings $\bar{T}\ne \bar{T}'$ in $\Omega$ that agree on an open half-space.
\end{proposition}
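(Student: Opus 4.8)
The plan is to first produce, for arbitrarily large $R$, a pair of \emph{distinct} tilings that agree on the ball $B_R(0)$, and then to ``slide the nearest point of disagreement off to infinity'' and pass to a subsequential limit in which the region of agreement opens up into an entire half-space. The creative core is the second step: a ball of radius $r$ whose boundary passes through the origin fills, as $r\to\infty$, the open half-space on the side of its center, so a sequence of ever-larger balls of agreement, translated so that they are tangent to the origin from a fixed direction, degenerates into a half-space of agreement while a pinned disagreement at the origin survives.

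For the first step, I would use that $\Omega$ is non-periodic, repetitive and FLC to guarantee that its canonical transversal has no isolated points (this is the standard fact that such a transversal is a Cantor set; see \cite{AP}). Concretely, if some fixed $R$ had the property that any two tilings agreeing on $B_R(0)$ were equal, then each tiling would be determined by its central $R$-patch together with its phase, and combined with minimality and freeness of the $\R^d$-action this forces $\Omega$ to be periodic, a contradiction. Hence for every $R$ there exist $S_R\ne S_R'$ in $\Omega$ with $B_R[S_R]=B_R[S_R']$, i.e.\ agreeing on $B_R(0)$.

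Now fix such a pair $S_n\ne S_n'$ agreeing on $B_n(0)$. Since they are distinct and $\Omega$ has FLC (so tile diameters are bounded by some constant $C$), I choose a genuine point of disagreement $y_n$ nearest the origin, so that $B_0[S_n-y_n]\ne B_0[S_n'-y_n]$ while $S_n,S_n'$ agree on the open ball $B_{r_n}(0)$ with $n\le r_n\le |y_n|\le r_n+C$. Translating by $-y_n$, set $U_n:=S_n-y_n$ and $U_n':=S_n'-y_n$; these disagree at the origin and agree on the ball $B_{r_n}(-y_n)$, whose center lies at distance $|y_n|=r_n+O(1)$ from the origin and whose radius is $r_n$. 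Passing to a subsequence, I may assume $y_n/|y_n|\to u\in\mathbb{S}^{d-1}$ and (by FLC and pigeonhole on the finitely many local patch-classes) that $B_0[U_n]\equiv p$ and $B_0[U_n']\equiv p'$ are constant with $p\ne p'$. By compactness of $\Omega$ I extract limits $U_n\to\bar T$ and $U_n'\to\bar T'$. The disagreement at the origin is preserved in the limit, so $\bar T\ne\bar T'$; and for any fixed $v$ with $\langle v,u\rangle<-C$ one checks that $v\in B_{r_n}(-y_n)$ for all large $n$, whence $B_0[U_n-v]=B_0[U_n'-v]$ stabilizes to give $B_0[\bar T-v]=B_0[\bar T'-v]$. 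Thus $\bar T$ and $\bar T'$ agree on the open half-space $\{x:\langle x,u\rangle<-C\}$.

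The step I expect to be the main obstacle is the geometric limiting argument in the last paragraph: I must simultaneously (i) keep the radius of agreement comparable to the distance of its center from the origin, so that the balls degenerate to a genuine open half-space rather than collapsing or filling all of $\R^d$, and (ii) ensure that the disagreement is pinned at a bounded location (here the origin) so that it is not lost in the limit, which is exactly why I select the \emph{nearest} disagreement point and why the FLC bound $C$ on tile diameters is essential. The first step, by contrast, is a fairly standard consequence of non-periodicity.
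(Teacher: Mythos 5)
Your overall strategy is exactly the paper's: use repetitivity plus non-periodicity to produce distinct tilings agreeing on arbitrarily large balls, recenter at a nearest point of disagreement, and pass to a subsequential limit in which the agreement balls (of radius comparable to the distance of their centers from the origin) open up into a half-space. Your first step is correct (the paper does it even more directly: repetitivity gives $x_n\ne0$ with $B_n[T-x_n]=B_n[T]$, and non-periodicity gives $T-x_n\ne T$), and your half-space limit computation is fine. The genuine gap is at the step you yourself flag as the crux: the claim that ``by FLC and pigeonhole'' one may pass to a subsequence along which $B_0[U_n]\equiv p$ and $B_0[U_n']\equiv p'$ are \emph{exactly} constant. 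FLC gives only finitely many patches \emph{up to translation}; the exact patch $B_0[U_n]$ also records where the origin sits relative to the tiles, a continuous parameter, and in general the recurrence points $y_n$ strike the configuration at pairwise distinct offsets, so the exact patches $B_0[U_n]$ are all distinct and no subsequence is constant. As written, this step fails.

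What pigeonhole actually yields is $(B_0[U_n],B_0[U_n'])=(p-z_n,p'-z_n)$ for a fixed pair $p\ne p'$ and a \emph{single} sequence of offsets $z_n$, which after a further subsequence and a recentering of $(p,p')$ may be assumed to satisfy $z_n\to0$; this is precisely the device in the paper's proof. Two points then need attention. First, to know the \emph{pairs} fall into finitely many classes up to translation, you must bound the relative displacement of the two patches; this holds because $U_n$ and $U_n'$ share a tile containing the origin (the nearest disagreement point is a limit of agreement points, and tiles are closed and locally finite), but it has to be said -- the paper records this as $B_1[T-x_n-y_n]\cap B_1[T-y_n]\ne\emptyset$. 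Second, and crucially, the offset must be simultaneous: if you recenter the two patches independently, the information tying them together is lost, and the two limits could a priori coincide (the configurations could differ by shifts tending to zero). It is the shared tile together with FLC that forbids such vanishing discrepancies, and that is exactly what the single $z_n$ encodes. Finally, note the paper works with $1$-balls rather than $0$-balls, so that $spt(P), spt(P')\supset B_1(0)$; this makes the conclusion $\bar T\ne\bar T'$ (no one tiling can contain both distinct full ball-patches $P$ and $P'$) clean, whereas with $B_0$-patches the supports may degenerate near the limit point. With these repairs your argument becomes the paper's proof.
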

\begin{proof} Let $T$ be any tiling in $\Omega$. By repetitivity, there are $x_n\ne 0$ with $|x_n|\to\infty$
so that $B_n[T-x_n]=B_n[T]$. By non-periodicity, $T-x_n\ne T$ so $r_n:=sup\{r:B_r[T-x_n]=B_r[T]\}<\infty$. Pick $y_n\in B_{r_n}(0)$ with $B_1[T-x_n-y_n]\ne B_1[T-y_n]$. Since $B_1[T-x_n-y_n]\cap B_1[T-y_n]\ne\emptyset$, finite local complexity insures that, up to translation, there are only finitely many pairs
$(B_1[T-x_n-y_n], B_1[T-y_n])$. Passing to a subsequence we may assume that $(B_1[T-x_n-y_n], B_1[T-y_n])=(P'-z_n,P-z_n)$, with $z_n\to 0$, for some pair of distinct patches $(P',P)$. By compactness, there is a subsequence $n_i$,  tilings $\bar{T}',\bar{T}\in\Omega$, and $u$, so that $T-x_{n_i}-y_{n_i}\to\bar{T}'$, $T-y_{n_i}\to\bar{T}$, and $y_{n_i}/|y_{n_i}|\to u$. Then $\bar{T}\ne\bar{T}'$ (since $\bar{T}'\supset P',\bar{T}\supset P$) and
$\bar{T},\bar{T}'$ agree on the half-space $\{v:\langle v,u\rangle<0\}$.
\end{proof}

The translation action on a tiling space is not expansive: if $|w|$ is small, then $d(T-v,T-w-v)$ remains small for all $v$. But, assuming FLC, the $\R^d$-action on a $d$-dimensional tiling space is {\em transversely expansive} in the following sense:
there is an $\alpha>0$ so that if $T\ne T'\in\Omega$ and $T\cap T'\ne\emptyset$ then there is a $v\in\R^d$ so that $d(T-v,T'-v)\ge\alpha$. This notion of transversely expansive is the natural way of transferring expansivity of a $\Z^d$-action to ``expansivity" of the $\R^d$-action obtained by suspension. More generally, if $V$ is a subspace of $\R^d$, then the translation action on $\Omega$ is {\em transversely expansive in the direction of V} if, in the definition of transversely expansive, the $v$ can be taken from $V$. The following is analogous to a result of Boyle and Lind (\cite{BL}) for expansive $\Z^d$-actions.

\begin{corollary}\label{transversely expansive} If $\Omega$ is a $d$-dimensional, non-periodic, repetitive, FLC tiling space then there is a $d-1$-dimensional subspace $V$ of $\R^d$ so that the translation action is not transversely expansive in the direction of $V$. Moreover, if $V$ is such a subspace, then there are tilings $\bar{T}\ne\bar{T}'\in\Omega$ that agree on one of the components of
$\R^d\setminus V$.
\end{corollary}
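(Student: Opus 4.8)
The plan is to derive both assertions from Proposition \ref{agree in half-space}. For the first assertion, I would apply the proposition to obtain $\bar T\neq\bar T'$ in $\Omega$ that agree on an open half-space $\{v:\langle v,u\rangle<0\}$, and set $V:=u^{\perp}$, a $(d-1)$-dimensional subspace. To see that translation is not transversely expansive in the direction of $V$, I would translate the pair deep into its half-space of agreement: for $t>0$ put $T_t:=\bar T+tu$ and $T_t':=\bar T'+tu$. A short computation with the definition of ``agree on'' shows that $T_t,T_t'$ agree at $x$ precisely when $\langle x,u\rangle<t$, and for $t>1/\epsilon$ this region contains the ball $B_{1/\epsilon}(v)$ about every $v\in V$. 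Hence $B_{1/\epsilon}[T_t-v]=B_{1/\epsilon}[T_t'-v]$ and so $d(T_t-v,T_t'-v)<\epsilon$ for all $v\in V$, while $T_t\neq T_t'$ and the two tilings share tiles. Thus, for every $\alpha>0$, taking $\epsilon=\alpha$ and $t>1/\alpha$ exhibits a pair of distinct tilings sharing a tile that stay within $\alpha$ of each other throughout $V$; this is exactly the failure of transverse expansiveness in the direction of $V$.

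For the ``moreover'' assertion, suppose $V$ is any $(d-1)$-dimensional subspace along which transverse expansiveness fails, with unit normal $u$. Then for each $n$ there is a pair $T_n\neq T_n'$ sharing a tile with $d(T_n-v,T_n'-v)<1/n$ for all $v\in V$. I would first argue that this uniform closeness forces $T_n$ and $T_n'$ to agree on a slab $\{x:|\langle x,u\rangle|<\rho_n\}$ about $V$ with $\rho_n\to\infty$ (agreement on a large ball about each point of $V$, unioned over $V$). Since $T_n\neq T_n'$, they disagree somewhere, necessarily at distance $\ge\rho_n$ from $V$; passing to a subsequence, these disagreements occur on a fixed side of $V$. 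I then relocate: choosing a disagreement point $p_n$ on that side with $\langle p_n,u\rangle$ within $1/n$ of its infimal value, I set $S_n:=T_n-p_n$ and $S_n':=T_n'-p_n$, so that $S_n,S_n'$ disagree at the origin while agreeing on $\{-\rho_n-\langle p_n,u\rangle<\langle y,u\rangle<\rho_n-\langle p_n,u\rangle\}$, a slab whose lower edge tends to $-\infty$ and whose upper edge tends to $0^{-}$.

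Finally I would extract, by compactness of $\Omega$, a subsequence along which $S_n\to\bar T$ and $S_n'\to\bar T'$. For each fixed $y$ with $\langle y,u\rangle<0$ the point $y$ lies in the agreement slab of $S_n$ for all large $n$, so $\bar T$ and $\bar T'$ agree at $y$; thus $\bar T,\bar T'$ agree on the component $\{x:\langle x,u\rangle<0\}$ of $\R^d\setminus V$. To keep the limit pair distinct I would copy the device in the proof of Proposition \ref{agree in half-space}: the patches $B_1[S_n],B_1[S_n']$ overlap near the origin and, by FLC, range over finitely many translation classes, so along a further subsequence they equal $(P'-z_n,P-z_n)$ with $z_n\to 0$ for fixed distinct patches $P',P$; then $\bar T\supset P'$ and $\bar T'\supset P$ are distinct.

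I expect the main obstacle to be the bookkeeping in the second assertion: converting the symmetric ``closeness along $V$'' (which in a naive limit would force $\bar T=\bar T'$) into one-sided half-space agreement. The crux is choosing the recentering vector $p_n$ at the innermost disagreement so that the upper edge $\rho_n-\langle p_n,u\rangle$ of the agreement slab converges to $0$ from below rather than running off to $-\infty$; this, together with the FLC argument preserving distinctness, is where care is needed. The translation ambiguity in the tiling metric (the small $v,v'$ in the definition of $d$) must also be absorbed, again using FLC exactly as in the proposition.
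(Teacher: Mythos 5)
Your proposal is correct and is essentially the paper's own argument: the first assertion is obtained by translating the half-space-agreeing pair from Proposition \ref{agree in half-space} deep into its region of agreement, and the ``moreover'' assertion by recentering the $1/n$-close pairs at a (near-)innermost disagreement point, extracting a subsequential limit, and using FLC both to absorb the small correcting translations (the paper makes these explicit as $x_n\to 0$ with $B_0[T_n-x_n]=B_0[T_n']$) and to keep the limit tilings distinct. One formula needs repair: after recentering, the upper edge of the agreement region is $h_n-\langle p_n,u\rangle$, where $h_n\ge\rho_n$ is the infimal disagreement height --- agreement holds on all of $\{x:-\rho_n<\langle x,u\rangle<h_n\}$, not merely on the symmetric slab --- so it is this quantity, not $\rho_n-\langle p_n,u\rangle$ (which could run off to $-\infty$ if $h_n\gg\rho_n$), that lies within $1/n$ of $0$ and tends to $0^-$ as you claim.
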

\begin{proof} By Proposition \ref{agree in half-space} there is a $w\ne 0$ and tilings $\bar{T}\ne\bar{T}'\in\Omega$ so that $\bar{T}$ and $\bar{T}'$ agree on $\{x:\langle x,w\rangle>0\}$. Let $V:=\{w\}^{\perp}$.
Then, for any $\alpha>0$, there is a $t>0$ so that, if $T:=\bar{T}-tw$ and $T':=\bar{T}'-tw$, $sup\{d(T-v,T'-v):v\in V\}<\alpha$.

Suppose now that $V$ is a $d-1$-dimensional subspace of $\R^d$ so that the translation action on $\Omega$ is not transversely expansive in the direction of $V$. There are then $T_n\ne T_n'\in\Omega$
so that $T_n\cap T_n'\ne\emptyset$ and $d(T_n-v,T_n'-v)<1/n$ for each $n\in\N$ and all $v\in V$. There are then $x_n\to0$ so that $B_0[T_n-x_n]=B_0[T_n']$ for all $n\in\N$. It follows that there are $t_n\to\infty$ so that $T_n-x_n -v$ agrees with $T_n'-v$ on the strip $V+\{tw:|t|\le t_n\}$, where $w\in\mathbb{S}^{d-1}$ is perpendicular to $V$. Notice that since $T_n\cap T_n'\ne\emptyset$, $T_n-x_n\ne T_n'$ for large $n$. By appropriately translating $T_n-x_n$ and $T_n'$ we obtain tilings $S_n\ne S_n'\in\Omega$ with the properties:  $B_1[S_n]\ne B_1[S_n']$ and  $S_n$ and $S_n'$ agree on $V+\{tw:0\le t\le 2t_n\}$ or on
 $V-\{tw:0\le t\le 2t_n\}$. Let $n_i\to\infty$ be such that $S_{n_i}\to\bar{T}\in\Omega$ and $S_{n_i}'\to\bar{T}'\in\Omega$. Then $\bar{T}\ne\bar{T'}$ and $\bar{T}$ and $\bar{T}'$ agree on one of the components of $\R^d\setminus V$.
\end{proof}

It is clear that if tilings $T$ and $T'$ agree on the open half-space $\{v\in\mathbb{R}^d:\langle v,u\rangle\,>0\}$, some $u\in\mathbb{S}^{d-1}$, then $T$ and $T'$ are uniformly asymptotic in directions $\{v\in\mathbb{S}^{d-1}:\langle v,u\rangle\,>\epsilon\}$ for each $\epsilon>0$.

\begin{lemma}\label{asymptotic direction} If $\Omega$ is a $d$-dimensional FLC tiling space and $T,T'\in\Omega$ are asymptotic in direction $v\in\mathbb{S}^{d-1}$ then there is $R$ so that $T$ and $T'$ agree on $\{tv:t\ge R\}$.
\end{lemma}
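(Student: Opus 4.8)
The plan is to unwind the metric definition of asymptoticity into a statement of \emph{exact} agreement on a tube around the ray tail; the only real work is to eliminate the small translations that the metric $d$ permits. First I would record the consequence of FLC that drives the argument. Since every tile is a translate of one of finitely many prototiles, there is $r>0$ so that each tile contains a ball of radius $r$ in its interior, and a $\rho>0$ bounding the diameter of every tile. Because the tiles of any single tiling have pairwise disjoint interiors, no tiling $S\in\Omega$ can contain both $\sigma$ and $\sigma-w$ with $0<|w|<r$ (their interiors would overlap). Equivalently, if $S$ and $S-w$ agree, as tilings, on a ball containing the support of a tile and $|w|<r$, then $w=0$. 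This is the rigidity that will absorb the metric's slack.

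Next I would unwind asymptoticity. Fix $\epsilon>0$ small (so that $\epsilon<r/2$ and the radius below exceeds $2\rho$). Asymptoticity in direction $v$ gives $R_\epsilon$ with $d(T-tv,T'-tv)<\epsilon$ for all $t\ge R_\epsilon$, and the definition of $d$ then produces, for each such $t$, a vector $c_t$ with $|c_t|<\epsilon$ and $T'=T-c_t$ on the ball $B_{L}(tv)$, where $L=L(\epsilon)\to\infty$ as $\epsilon\to0$. Thus $tv$ lies in a growing tube on which $T$ and $T'$ coincide up to the single small translation $c_t$.

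The crux is to show $c_t=0$. For $t,t'\ge R_\epsilon$ with $|t-t'|\le\rho$ the balls $B_{L}(tv)$ and $B_{L}(t'v)$ overlap in a region large enough to contain the support of a tile; on that overlap $T-c_t=T'=T-c_{t'}$, so $T=T-(c_{t'}-c_t)$ there, and since $|c_{t'}-c_t|<2\epsilon<r$ the rigidity above forces $c_{t'}=c_t$. Chaining along the ray shows $c_t$ equals a single constant $c^{(\epsilon)}$ for all $t\ge R_\epsilon$. Because $R_\epsilon<\infty$ for every $\epsilon$, any two scales $\epsilon_1,\epsilon_2$ can be compared at a common large $t$, where the same overlap argument gives $c^{(\epsilon_1)}=c^{(\epsilon_2)}$; hence all the constants coincide with one $c$ satisfying $|c|<\epsilon$ for \emph{every} $\epsilon>0$, so $c=0$.

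With $c=0$ in hand, $T'=T$ on $B_{L(\epsilon)}(tv)$ for every $t\ge R_\epsilon$; fixing any one admissible $\epsilon$ and setting $R:=R_\epsilon$ yields $B_0[T-tv]=B_0[T'-tv]$ for all $t\ge R$, which is exactly agreement of $T$ and $T'$ on $\{tv:t\ge R\}$. I expect the FLC rigidity step to be the main obstacle: converting metric closeness (which only gives agreement \emph{after} an $\epsilon$-small translation) into genuine equality of the tiles met by the ray. The device that resolves it is precisely that a bounded tile cannot coincide with a nonzero translate of itself, so the permitted slack $c_t$ is pinned to a single constant along the ray and then driven to $0$ by letting $\epsilon\to0$. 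A secondary point is the uniform inner radius $r$, which in the substitution setting is immediate and is what lets the tube argument avoid any case analysis at tile boundaries: once $c=0$ the agreement is exact on balls, so boundary-crossing values of $t$ need no special treatment.
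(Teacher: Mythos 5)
Your proof is correct and takes essentially the same route as the paper's: both unwind the metric into a small translation slack along the ray, invoke the FLC rigidity that no tiling can contain a tile together with a nonzero small translate of it to show the slack is locally (hence globally) constant, and then force the constant to be zero. The only cosmetic differences are that the paper propagates constancy via unit-ball patches (its properties (1) and (2)) and kills the constant by using $x(t)\to 0$ directly, whereas you use overlaps of large balls and a comparison across scales $\epsilon$; these are equivalent implementations of the same idea.
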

\begin{proof} As a consequence of finite local complexity, there is $\epsilon>0$ so that: (1) if $T_1,T_2\in\Omega$ satisfy $B_1[T_1]=B_1[T_2]$ and $|x|<\epsilon$, then $B_0[T_1-x]=B_0[T_2-x]$; and (2) if $B_0[T_1]=B_0[T_1-x]$ for some $T_1\in\Omega$ and $x$ with $|x|<\epsilon$, then $x=0$. Now if
$T$ and $T'$ are asymptotic in direction $v$, there is $x(t)$ so that $B_1[T-tv]=B_1[T'-tv-x(t)]$ and $x(t)\to0$ as $t\to\infty$. Let $R$ be large enough so that $|x(t)|<\epsilon/2$ for $t\ge R$. For $t_0>R$ we have
$B_1[T-t_0v]=B_1[T'-t_0v-x(t_0)]$ so $B_0[T-tv]=B_0[T'-tv-x(t_0)]$ for all $t$ with $|t-t_0|<\epsilon$, by (1). Also, $B_0[T-tv]=B_0[T'-tv-x(t)]$. Thus, if $|t-t_0|<\epsilon$ and $t,t_0\ge R$, we have $B_0[T'-tv-x(t_0)]=B_0[T'-tv-x(t)]$ and $|x(t)-x(t_0)|<\epsilon$, so that $x(t)=x(t_0)$, by (2). That is, $x(t)$ is constant
for $t\ge R$, and hence $x(t)=0$ for $t\ge R$.
\end{proof}

\begin{lemma}\label{asymptotic directions}  If $\Omega$ is a $d$-dimensional FLC tiling space and $T,T'\in\Omega$ are uniformly asymptotic in directions $S\subset\mathbb{S}^{d-1}$ then there is $R$ so that $T$ and $T'$ agree on $\{tv:v\in S,t\ge R\}$.
\end{lemma}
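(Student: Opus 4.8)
The plan is to reduce to the proof of Lemma~\ref{asymptotic direction} by observing that the $R$ produced there depends only on the rate at which $T-tv$ and $T'-tv$ approach one another, and that uniform asymptoticity supplies this rate uniformly over $v\in S$. First I would fix the FLC constant $\epsilon>0$ enjoying properties (1) and (2) from the proof of Lemma~\ref{asymptotic direction}. Then, applying the definition of uniform asymptoticity with a sufficiently small tolerance (small enough that the metric estimate $d(T-tv,T'-tv)<\eta$ forces a matching of $B_1$-patches after a translation of size less than $\epsilon/2$), I obtain a single $R$ with the following property: for every $v\in S$ and every $t\ge R$ there is $x(v,t)$ with $|x(v,t)|<\epsilon/2$ and $B_1[T-tv]=B_1[T'-tv-x(v,t)]$. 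The essential gain over Lemma~\ref{asymptotic direction} is that this $R$ is independent of $v$.

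Next, I would fix an arbitrary $v\in S$ and run the local-constancy argument of Lemma~\ref{asymptotic direction} verbatim in the single variable $t$, now with $R_v=R$. For $t_0\ge R$ and $|t-t_0|<\epsilon$, property (1) applied to the shift $(t-t_0)v$ gives $B_0[T-tv]=B_0[T'-tv-x(v,t_0)]$, while the defining relation for $x(v,t)$ gives $B_0[T-tv]=B_0[T'-tv-x(v,t)]$; comparing these and invoking property (2) (legitimate since $|x(v,t)-x(v,t_0)|<\epsilon$) yields $x(v,t)=x(v,t_0)$. Thus $t\mapsto x(v,t)$ is locally constant, hence constant, on the connected set $[R,\infty)$. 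Since uniform asymptoticity in $S$ implies asymptoticity in the individual direction $v$, we have $x(v,t)\to 0$ as $t\to\infty$, so the constant value is $0$; consequently $B_0[T-tv]=B_0[T'-tv]$ for all $t\ge R$.

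Because $R$ was chosen before fixing $v$ and the preceding paragraph applies to every $v\in S$, the equality $B_0[T-tv]=B_0[T'-tv]$ holds for all $v\in S$ and all $t\ge R$; that is, $T$ and $T'$ agree on $\{tv:v\in S,\ t\ge R\}$. I expect the only delicate point to be the first step: verifying that uniform asymptoticity really does deliver a single $R$ together with translations $x(v,t)$ bounded by $\epsilon/2$ simultaneously for all $v\in S$. This is exactly where the hypothesis \emph{uniformly} asymptotic (rather than merely asymptotic in each direction of $S$) is used, and it is what lets the one-direction argument be applied with a common cutoff; no joint continuity of $x(v,t)$ in $v$ is needed, since each direction is treated separately once the uniform $R$ is in hand.
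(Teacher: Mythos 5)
Your proof is correct and follows essentially the same route as the paper: the paper likewise fixes the FLC constant $\epsilon$ from the proof of Lemma~\ref{asymptotic direction}, uses uniform asymptoticity to obtain a single cutoff $R$ with translation vectors $x(t,v)$ of size less than $\epsilon/2$ for all $v\in S$ and $t\ge R$, and then runs the one-direction constancy argument for each fixed $v$ to conclude $x(t,v)=0$. Your write-up simply makes explicit the details (the choice of tolerance and the local-constancy step) that the paper compresses into ``as in the proof of Lemma~\ref{asymptotic direction}.''
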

\begin{proof} Let $\epsilon$ be as in the proof of Lemma \ref{asymptotic direction}. Then, for each
$v\in S$ there is an $R$ and $x(t,v)$ with $|x(t,v)|<\epsilon/2$ for $t\ge R$ so that $B_1[T-tv]=
B_1[T'-tv-x(t,v)]$. As in the proof of Lemma \ref{asymptotic direction}, this implies that $x(t,v)=0$
for all $v\in S$ and $t\ge R$. Thus $T$ and $T'$ agree on $\{tv:v\in S,t\ge R\}$.
\end{proof}

If $\Phi$ is the induced homeomorphism on a substitution tiling space $\OP$ and $T\in\OP$,
the {\em stable manifold} of $T$ is the set $W^s(T):=\{T'\in\OP:d(\Phi^n(T'),\Phi^n(T))\to0$ as $n\to\infty\}$.

\begin{lemma}\label{stable mfd} Suppose that $\Omega=\Omega_{\Phi}$ is an FLC substitution tiling space and $T\in\Omega$. Then $W^s(T)=\{T'\in\Omega:\forall r,\exists n$ such that $B_r[\Phi^n(T')]=B_r[\Phi^n(T)]\}=\{T'\in\Omega:B_0[\Phi^n(T')]=B_0[\Phi^n(T)]$ for some $n\}$.
\end{lemma}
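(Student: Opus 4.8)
The plan is to name the three sets $A := W^s(T)$, $B := \{T' : \text{for all } r \text{ there is } n \text{ with } B_r[\Phi^n(T')] = B_r[\Phi^n(T)]\}$, and $C := \{T' : B_0[\Phi^n(T')] = B_0[\Phi^n(T)] \text{ for some } n\}$, and to prove $B = C$ and $A = C$. The engine for everything is a single observation about how $\Phi$ interacts with agreement: recall that $S$ and $S'$ \emph{agree on} $X\subseteq\R^d$ when $B_0[S-v]=B_0[S'-v]$ for all $v\in X$. Directly from the definition of $\Phi$ on patches together with $\mathrm{spt}(\Phi(\rho_i))=\Lambda(\mathrm{spt}(\rho_i))$, one gets: if $S$ and $S'$ agree on $X$, then $\Phi(S)$ and $\Phi(S')$ agree on $\Lambda(X)$. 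Indeed, a tile of $\Phi(S)$ covering a point $\Lambda v$ must lie in $\Phi(\tau)$ for some $\tau\in S$ with $v\in\mathrm{spt}(\tau)$, i.e. among the substitutions of the tiles of $S$ covering $v$; these are identical to those of $S'$. I would also record the identity $\Phi(S-z)=\Phi(S)-\Lambda z$ and fix a constant $c>1$ with $\Lambda(B_r(0))\supseteq B_{cr}(0)$ for all $r$ (possible since $\Lambda$ is expanding; $c=\lambda$ in the self-similar case).

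With this engine, $B=C$ is quick: $B\subseteq C$ is the case $r=0$, and for $C\subseteq B$ I suppose $B_0[\Phi^n(T')]=B_0[\Phi^n(T)]$. Finite local complexity then forces $\Phi^n(T')$ and $\Phi^n(T)$ to agree on a ball $B_\delta(0)$ with $\delta>0$ (the finitely many tiles meeting the origin cover a neighborhood of it). Applying the engine $m$ times, $\Phi^{n+m}(T')$ and $\Phi^{n+m}(T)$ agree on $\Lambda^m(B_\delta(0))\supseteq B_{c^m\delta}(0)$, whose radius tends to infinity; given $r$, any $m$ with $c^m\delta\ge r$ yields $B_r[\Phi^{n+m}(T')]=B_r[\Phi^{n+m}(T)]$, so $T'\in B$. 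This same monotone growth gives $C\subseteq A$: the agreement on $B_{c^m\delta}(0)$ holds with \emph{no} translation, so $d(\Phi^{n+m}(T'),\Phi^{n+m}(T))\to 0$ as $m\to\infty$, i.e. $T'\in W^s(T)=A$.

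The remaining inclusion $A\subseteq C$ is the crux, and the main obstacle is that metric closeness only yields agreement \emph{up to a small translation}. Unwinding the metric, for large $n$ there are $s_n\to 0$ and radii $R_n\to\infty$ so that $\Phi^n(T')$ and $\Phi^n(T)-s_n$ agree on $B_{R_n}(0)$, and the goal is to show $s_n=0$ for large $n$. Here I would use both the expansion and the aperiodicity. Applying the engine to the level-$(n-1)$ agreement and using $\Phi(S-z)=\Phi(S)-\Lambda z$ shows that $\Phi^n(T')$ also agrees with $\Phi^n(T)-\Lambda s_{n-1}$ on a large ball; comparing the two matches on their common ball shows that $\Phi^n(T)$ agrees with its own translate by $s_n-\Lambda s_{n-1}$ near the origin. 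Since $s_n,\Lambda s_{n-1}\to 0$, this displacement is eventually smaller than the finite local complexity constant $\epsilon$ from the proof of Lemma~\ref{asymptotic direction} (no tiling has a nonzero translational near-period of size $<\epsilon$), which forces $s_n=\Lambda s_{n-1}$ for all large $n$. Iterating gives $s_{n+k}=\Lambda^k s_n$, so if some $s_n\ne 0$ then $|s_{n+k}|\ge c^k|s_n|\to\infty$, contradicting $s_n\to 0$. Hence $s_n=0$ for large $n$, $\Phi^n(T')$ and $\Phi^n(T)$ agree genuinely on $B_{R_n}(0)\ni 0$, and in particular $B_0[\Phi^n(T')]=B_0[\Phi^n(T)]$, so $T'\in C$. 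Combining the inclusions yields $A=C=B$, and since $A=W^s(T)$ by definition the lemma follows.
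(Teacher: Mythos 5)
Your proof is correct and takes essentially the same approach as the paper's: for the hard inclusion $W^s(T)\subseteq C$, both arguments extract the small matching translations (your $s_n$, the paper's $x_n$), use an FLC rigidity constant to show they must propagate as $s_{n+k}=\Lambda^k s_n$ under the substitution, and then let the expansion of $\Lambda$ force them to vanish. The remaining differences are cosmetic: the paper closes the contradiction at a single well-chosen power $k$ (where $|\Lambda^k x_N|$ lands in a fixed window) using a uniform expansivity constant $\delta(\epsilon_2)$, whereas you iterate and contradict $s_n\to 0$ directly, and you write out the agreement-propagation (``engine'') facts behind the inclusions that the paper dismisses as clear.
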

\begin{proof} The equality of the last two sets, and their containment in $W^s(T)$, is clear.
For the opposite containment, note that, by finite local complexity, there is $\epsilon_1>0$ so that, for all
$\epsilon_2$, with $0<\epsilon_2\le\epsilon_1$, $inf\{d(T_1,T_2-x):T_1,T_2\in\Omega, B_1[T_1]=B_1[T_2],\epsilon_2\le|x|\le\epsilon_1\}=:\delta(\epsilon_2)>0$. Choose $\epsilon_1>0$ also small enough so that if $T_1$ and $T_2$ are any two tilings in $\Omega$ with $B_1[T_1]=B_1[T_2]$, then $B_1[T_1-x]=B_1[T_2-y]$ and $|x|,|y|\le\epsilon_1$ implies that $x=y$. Set $\epsilon_2=\epsilon_1/|\Lambda|_{max}$, where $|\Lambda|_{max}:=max\{|\Lambda v|:|v|=1\}$ and $\Lambda$ is the inflation matrix for $\Phi$. Now if  $T'\in W^s(T)$, there are $N$  and $x_n$ with $|x_n|<\epsilon_2$, so that $B_1[\Phi^n(T')]=B_1[\Phi^n(T)-x_n]$ and $d(\Phi^n(T'),\Phi^n(T))<\delta(\epsilon_2)$ for all $n\ge N$. If $x_N\ne0$, there is $k>0$ so that $\epsilon_2\le |\Lambda^k x_N|\le\epsilon_1$. We would have $B_1[\Phi^{N+k}(T)-x_{N+k}]=B_1[\Phi^{N+k}(T')]=B_1[\Phi^k(\Phi^N(T)-x_N)]=B_1[\Phi^{N+k}(T)-\Lambda^kx_N]$ so that $x_{N+k}=\Lambda^kx_N$, from which it would follow that $d(\Phi^{N+k}(T'),\Phi^{N+k}(T))\ge\delta(\epsilon_2)$, in contradiction to the choice of $N$. Thus $B_1[\Phi^N(T')]=B_1[\Phi^N(T)]$.
\end{proof}

If $\Omega=\OP$ is a $d$-dimensional substitution tiling space with $T,T'\in\Omega$ and $T\in W^s(T')$ we will say that
$T$ and $T'$ are {\em stably equivalent} and write $T\sim_s T'$. Note that as a consequence of Lemma \ref{stable mfd} the relation $\nsim_s$, the negation of $\sim_s$, has the property: $T-x_n\nsim_s T'-x_n$ and $x_n\to x$ $\implies$ $T-x\nsim_s T'-x$ . The tiling $T\in\Omega$
will be called a {\em branch point} if there is a tiling $T'\in\Omega$ and an open half space $H$ so that
$T\nsim_s T'$ and $T-x\sim_s T'-x$ for all $x\in H$. In this case, $(T,T')$ will be called a {\em branch pair};
the connected component of $\{v\in\mathbb{S}^{d-1}:T-tv\sim_sT'-tv,\,\forall t>0\}$ containing $H\cap\mathbb{S}^{d-1}$ will
be called the {\em asymptotic sector} of $(T,T')$ and denoted by $S(T,T')$. We will denote the collection of all branch pairs of $\OP$ by $\mathcal{BP}(\Phi)$. (It may occur that a branch pair has two different asymptotic sectors, $H\cap\mathbb{S}^{d-1}$ and $-H\cap\mathbb{S}^{d-1}$ for some open half-space $H$, but this ambiguity will be harmless.)

For a given substitution tiling space $\Omega$, patches $P,P'$, and $X\subset int(spt(P)\cap spt(P'))$, we'll say that $P$ and $P'$ are {\em stably related on $X$} provided, whenever $T,T'\in\Omega$ are such that $P\subset T,P'\subset T'$, it is the case that $T-x\sim_s T'-x$ for all $x\in X$. If $P$ and $P'$ are stably related on $X=int(spt(P)\cap spt(P'))\ne \emptyset$, we'll say that $P$ and $P'$ are {\em stably related on overlap}. The following is an adaptation of a lemma in \cite{sol}.

\begin{lemma}\label{no slip} If $P$ and $P'$ are patches for a non-periodic, FLC, self-similar substitution tiling space and $P$ and $P'$ are stably related on overlap, there is an $\epsilon>0$ so that if $0<|x|<\epsilon$ then $P-x$ and $P'$ are not stably related on overlap.
\end{lemma}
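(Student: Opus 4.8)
The plan is to argue by contradiction, reducing the hypothesis to a near-translational symmetry of a single tiling and then using self-similarity to blow this up into an exact period, which non-periodicity forbids.

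Suppose the conclusion fails. Then there are $x_n\to 0$, $x_n\ne 0$, such that $P-x_n$ and $P'$ are stably related on overlap for every $n$. I would fix tilings $T\supset P$ and $T'\supset P'$ in $\OP$ and set $X:=int(spt(P)\cap spt(P'))$ and $X_n:=int((spt(P)-x_n)\cap spt(P'))$. Since $T\supset P$ forces $T-x_n\supset P-x_n$, applying the two stability hypotheses (to the pairs $(T,T')$ and $(T-x_n,T')$) and using that $\sim_s$ is an equivalence relation, I obtain, for every $y\in X\cap X_n$,
\[
T-y\ \sim_s\ T'-y\ \sim_s\ (T-x_n)-y,\qquad\text{hence}\qquad (T-y)\ \sim_s\ (T-y)-x_n .
\]
Choosing an open ball $B$ with $\overline{B}\subset X$, one has $\overline{B}\subset X\cap X_n$ for all large $n$ (because $spt(P)-x_n\to spt(P)$), so $T-y\sim_s (T-x_n)-y$ for all $y\in B$ and all large $n$.

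Next I would convert this family of stable equivalences into honest agreement after inflation. By Lemma \ref{stable mfd}, for each $y\in B$ there is a level $\ell$ with $B_0[\Phi^{\ell}(T-y)]=B_0[\Phi^{\ell}((T-x_n)-y)]$; the set of $y$ for which a fixed level works increases with the level (agreement persists under $\Phi$) and exhausts $B$, so a uniformity argument of the type in the proof of Lemma \ref{asymptotic directions} produces a single level $L=L(n)$ valid for all $y$ in a slightly smaller ball $B'$. Since $\Phi^{L}(T-y)=\Phi^{L}T-\lambda^{L}y$, this says precisely that $S_n:=\Phi^{L(n)}T$ and $S_n-v_n$ agree on the ball $\lambda^{L(n)}B'$, where $v_n:=\lambda^{L(n)}x_n$. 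Applying $\Phi$ once more multiplies both the agreement radius $\lambda^{L}\,\mathrm{rad}(B')$ and the slip $|v_n|$ by $\lambda$, so along the whole tail of levels the ratio (agreement radius)$/|v_n|=\mathrm{rad}(B')/|x_n|$ is fixed and tends to $\infty$.

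The contradiction is then a Solomyak-type blow-up (\cite{sol}): inflating further, and provided $|v_n|$ has not already been driven past $\lambda$ (see below), I arrange $1\le|v_n|<\lambda$, while the agreement radius, being $\mathrm{rad}(B')/|x_n|$ times $|v_n|$, still tends to $\infty$. Passing to a subsequence with $S_n\to S\in\OP$ and $v_n\to v$, $0\ne|v|\le\lambda$, the agreement of $S_n$ with $S_n-v_n$ on ever larger balls forces $S=S-v$, a nonzero translational period, contradicting non-periodicity.

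The delicate step, and the only place where the self-similar, non-periodic structure is used essentially, is the coordination of the inflation level with the slip size. I must be able to produce a level at which agreement already holds while $|v_n|=\lambda^{L(n)}|x_n|$ is still bounded, so that a further, necessarily upward, inflation can place it in $[1,\lambda)$; equivalently, I must rule out that the threshold level at which agreement first appears has already sent $|v_n|$ to infinity. Bounding this threshold slip is exactly the recognizability-type estimate adapted from \cite{sol}: minimality of $L(n)$ together with finite local complexity pins the ``fault line'' between $\Phi^{L-1}T$ and $\Phi^{L-1}(T-x_n)$ at tile scale, forcing $|v_n|$ to be comparable to a prototile diameter. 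I expect this bound to be the main obstacle; the reduction in the first two paragraphs and the limiting argument around it are routine.
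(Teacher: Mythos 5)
Your first two steps are sound and in fact coincide with the first half of the paper's proof: combining the two stable-relation hypotheses by transitivity of $\sim_s$ to get $T-y\sim_s(T-y)-x_n$ on a ball $B'$, and then using Lemma \ref{stable mfd} plus compactness of $cl(B')$ to produce one level $L(n)$ at which $\Phi^{L(n)}(T)$ and $\Phi^{L(n)}(T)-\lambda^{L(n)}x_n$ genuinely agree on $\lambda^{L(n)}B'$ (the paper does this with $P=P'=\{\tau\}$, comparing $T$ with $T-x$ directly). The gap is exactly the step you flag as the main obstacle, and your sketched justification of it does not work. Minimality of $L(n)$ only says that $\Phi^{L(n)-1}(T)$ and its translate by $\lambda^{L(n)-1}x_n$ disagree somewhere in the ball, and this is \emph{automatic} whenever $\lambda^{L(n)-1}|x_n|$ is smaller than twice the inradius of the prototiles: a tiling disagrees at every point with any such short nonzero translate of itself, since two distinct tiles of one tiling have disjoint interiors and so cannot be translates of each other by a vector shorter than twice the inradius, while no compact tile equals a nonzero translate of itself. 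So minimality pins the slip from \emph{below} (agreement at level $L(n)$ forces $\lambda^{L(n)}|x_n|$ to be at least tile scale) but gives no upper bound: once the slip is at tile scale, disagreement can persist in a fault-line configuration near the boundary of the ball through arbitrarily many further inflations, so the first level of agreement on all of $B'$ may carry an arbitrarily large slip. Inflation only multiplies the slip by $\lambda$, and deflating it would require local determination of $\Phi^{-1}$, i.e.\ recognizability/unique composition --- the main theorem of \cite{sol} --- which you have not invoked and which is a much heavier tool than the lemma you need.

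The repair is the move the paper makes, and it removes any need to control $L(n)$ or the absolute slip size: Lemma 2.4 of \cite{sol} is \emph{scale-invariant}. It supplies a single $N>1$ such that if a patch $Q$ in a tiling of $\OP$ has support containing a ball of radius $r_1$ and $Q-x_1$ is a patch of the same tiling with $|x_1|<r_1/N$, then $x_1=0$. Your level-$L(n)$ agreement says precisely that the patch of $\Phi^{L(n)}(T)$ covering $\lambda^{L(n)}B'$ and its translate by $\lambda^{L(n)}x_n$ both occur in the single tiling $\Phi^{L(n)}(T)$, and the slip-to-radius ratio is the level-independent quantity $|x_n|/r'$, where $r'$ is the radius of $B'$. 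As soon as this ratio is below $1/N$ the lemma forces $\lambda^{L(n)}x_n=0$, a contradiction; no sequence $x_n\to0$, no limit tiling, and no normalization into $[1,\lambda)$ are needed --- a single $x$ with $0<|x|<r'/N$ already does it, and $\epsilon=r'/N$ is the $\epsilon$ of the statement. With that substitution your argument becomes the paper's proof; without it, the blow-up to an exact translational period is not justified.
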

\begin{proof} We prove the result in case $P=P'=\{\tau\}$ for some tile $\tau$, from which the general result follows easily. Let $\lambda$ be the expansion factor for $\Phi$. Pick $y\in int(\tau)$ and let $r>0$ be small enough so that $cl(B_{2r}(y))\subset int(\tau)$. By Lemma 2.4 of \cite{sol}, there is an $N>1$ so that if $Q$ and $Q-x_1$ are both patches in some tiling $T\in\Omega$, $B_{r_1}(y)\subset spt(Q)$, and $|x_1|<r_1/N$, then $x_1=0$.
Let $\epsilon:=r/N$, let $|x|<\epsilon$, and suppose that $\{\tau\}$ and $\{\tau\}-x$ are stably related on overlap. Since the compact ball $cl(B_r(y))$ is contained in $int(\tau\cap( \tau-x))$, there is an $M$ so that $B_0[\Phi^M(\{\tau\}-z)]=B_0[\Phi^M(\{\tau\}-x-z)]$ for all $z\in cl(B_r(y))$ (Lemma \ref{stable mfd}). That is, both the patch $Q:=B_{\lambda^Mr}[\Phi^M(\{\tau\})]$ and the patch $Q-\lambda^Mx=
B_{\lambda^Mr}[\Phi^M(\{\tau\}-x)]$ occur in the patch $\Phi^M(\{\tau\})$. Moreover, letting $x_1=\lambda^Mx$ and $r_1=\lambda^Mr$, we have $B_{r_1}(y)\subset spt(Q)$ and $|x_1|<r_1/N$. Thus $x_1=0$ and hence $x=0$.
\end{proof}

\begin{lemma}\label{T=T'} If $\Omega=\Omega_{\Phi}$ is a $d$-dimensional non-periodic, FLC,
self-similar substitution tiling space and $T,T'\in\Omega$ are such that $T-x\sim_sT'-x$ for all $x\in\mathbb{R}^d$, then $T=T'$.
\end{lemma}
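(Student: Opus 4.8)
The plan is to argue by contraposition: assuming $T\neq T'$, I will produce a point $y$ at which $T$ and $T'$ carry literally the same tile, contradicting the choice of $y$. Since $T\ne T'$ and both tile $\R^d$, the set of points lying in the interior of a tile of $T$ and in the interior of a \emph{differing} tile of $T'$ is open and nonempty (if it were empty the two tilings would agree off their skeletons, hence be equal). Fix such a $y$, let $\tau\in T$ and $\tau'\in T'$ be the tiles with $y\in\mathring\tau\cap\mathring{\tau'}$ and $\tau\ne\tau'$, and choose $\rho_0>0$ with $B_{\rho_0}(y)\subset\mathring\tau\cap\mathring{\tau'}$. It then suffices to show $T$ and $T'$ agree on $B_{\rho_0}(y)$, for that forces $\tau=\tau'$.

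The first key step is to upgrade the pointwise hypothesis $T-z\sim_s T'-z$ to a single inflation level valid uniformly on a ball. Fix a constant radius $\sigma>\mathcal R/(\lambda-1)+\rho_0$, where $\mathcal R$ is the recognition radius of $\Phi$ (whose existence is exactly the invertibility of $\Phi$; cf. \cite{Solo},\cite{sol}), and set $K:=cl(B_\sigma(y))$. For each $n$ let $A_n:=\{z\in K:\lambda^n z\in int(E_n)\}$, where $E_n$ is the union of the supports of the tiles common to $\Phi^n(T)$ and $\Phi^n(T')$. Each $A_n$ is open in $K$; the $A_n$ increase with $n$, since a common tile of $\Phi^n(T),\Phi^n(T')$ substitutes to a common patch of $\Phi^{n+1}(T),\Phi^{n+1}(T')$ whose interior still contains $\lambda^{n+1}z$; and $\bigcup_n A_n=K$, because $T-z\sim_s T'-z$ for every $z$ gives, via Lemma \ref{stable mfd} applied with $r=1$, agreement of $\Phi^n(T),\Phi^n(T')$ on a unit ball about $\lambda^n z$ for some $n$. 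By compactness of $K$ there is a single $M$ with $A_M=K$, so $\Phi^M(T)$ and $\Phi^M(T')$ agree on $\lambda^M K=cl(B_{\lambda^M\sigma}(\lambda^M y))$.

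The second step deflates this agreement. Recognizability gives, for any $S,\td S\in\Omega$ agreeing on a ball $B_\rho(w)$, agreement of $\Phi^{-1}(S),\Phi^{-1}(\td S)$ on $B_{(\rho-\mathcal R)/\lambda}(w/\lambda)$. Applying this $M$ times to $\Phi^M(T),\Phi^M(T')$ with $w=\lambda^M y$, the radius evolves from $\lambda^M\sigma$ down to $\sigma-\mathcal R(1-\lambda^{-M})/(\lambda-1)>\sigma-\mathcal R/(\lambda-1)>\rho_0$, while the center returns to $y$. Hence $T$ and $T'$ agree on $B_{\rho_0}(y)$, giving $\tau=\tau'$ and contradicting $\tau\ne\tau'$; therefore $T=T'$.

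The heart of the matter, and the step I expect to be most delicate, is the uniformization. A single point $z$ only yields, for each radius $r$, \emph{some} level $n(r)$ with agreement of $\Phi^{n}(T),\Phi^{n}(T')$ on $B_r(\lambda^{n}z)$, and with no control of $n$ against $r$ the naive deflation is circular, since recovering agreement of $T,T'$ near $z$ from level $n$ would require an agreement radius of order $\lambda^{n}$. What breaks the circularity is precisely that the hypothesis holds for \emph{all} translates: one fixes the ball $K$ of radius $\sigma$ \emph{first} and only then extracts one level $M$ valid throughout $K$, whereupon the agreement radius $\lambda^M\sigma$ beats the cumulative deflation loss $\lambda^M\cdot\mathcal R/(\lambda-1)$ because $\sigma>\mathcal R/(\lambda-1)$. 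This is also where the full-ball hypothesis is indispensable: a branch pair is stably related only across a half-space, so no ball of stably-related translates surrounds an interior point of the branching, and such pairs are genuinely unequal. The remaining care is the clean statement of recognizability with a \emph{constant} radius $\mathcal R$ (deflation costs only a bounded collar per step), which is standard for primitive, non-periodic, FLC self-similar substitutions and is the same rigidity that powers Lemma \ref{no slip}.
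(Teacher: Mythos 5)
Your argument is correct, but it takes a genuinely different route from the paper's, and the difference is worth spelling out. The paper globalizes first: using FLC and Lemma \ref{no slip} it shows there are, up to translation, only finitely many pairs of pairs $((B_0[T-x],B_{\Delta+1}[T-x]),(B_0[T'-x],B_{\Delta+1}[T'-x]))$, uses this to make the level $m$ supplied by Lemma \ref{stable mfd} depend only on the pair-type and the point of the overlap, extracts one uniform level $K$ by compactness of the finitely many overlaps, concludes the exact global equality $\Phi^K(T)=\Phi^K(T')$, and finishes with nothing more than injectivity of $\Phi$ (already stated, with citations, in the background). You localize instead: you fix a single large ball around a putative disagreement point, uniformize the level only over that ball --- and your uniformization is softer than the paper's, needing just openness of the level-$n$ agreement sets $A_n$ (which comes from Lemma \ref{stable mfd}) plus compactness of $K$, with none of the finitely-many-pair-types bookkeeping --- and then you deflate back down, which is where you must invoke recognizability in its \emph{quantitative} form: a uniform recognition radius $\mathcal{R}$ so that agreement on $B_\rho(w)$ pulls back to agreement on $B_{(\rho-\mathcal{R})/\lambda}(w/\lambda)$, with $\sigma>\mathcal{R}/(\lambda-1)+\rho_0$ chosen so the geometry beats the cumulative collar loss. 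That trade is the substantive difference, and it is also the one place you should be more careful: constant-radius recognizability is \emph{not} literally ``exactly the invertibility of $\Phi$'' as you parenthetically claim --- invertibility is the qualitative statement, and upgrading it to a uniform radius requires an FLC-plus-no-slip compactness argument of essentially the same flavor and difficulty as the pair-type bookkeeping you avoided (or a citation to an explicit statement of the effective form; it is standard and in the spirit of \cite{sol}, and you do flag it as the delicate point, so I count it as a citable fact rather than a gap). Two minor steps to fill: passing from $\lambda^MK\subset int(E_M)$ to exact agreement of $\Phi^M(T)$ and $\Phi^M(T')$ on $\lambda^MK$ in the sense $B_0[\Phi^M(T)-w]=B_0[\Phi^M(T')-w]$ needs the one-line observation that, by topological regularity and disjointness of tile interiors, any tile containing a point of $int(E_M)$ must itself be a common tile; and nonemptiness of your starting set is cleanest seen by taking $\tau\in T\setminus T'$ and noting that $\mathring{\tau}$ must meet the interior of some tile of $T'$, which is then automatically different from $\tau$.
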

\begin{proof} Let $\Delta:=max\{diam(\rho):\rho$ is a prototile for $\Omega\}$. Then for all $\bar{T}\in\Omega$, $spt(B_0[\bar{T}])\subset int(spt(B_{\Delta+1}[\bar{T}]))$, and there are, up to translation, just finitely
many pairs of the form $(B_0[\bar{T}],B_{\Delta+1}[\bar{T}])$. It follows from Lemma \ref{no slip} that, up to translation, there are only finitely many pairs of pairs of the form $((B_0[T-x],B_{\Delta+1}[T-x]),(B_0[T'-x],B_{\Delta+1}[T'-x]))$, say for each $x$ there is an $i\in\{1,\ldots,n\}$ and a $y=y(x)$ with $((B_0[T-x],B_{\Delta+1}[T-x]),(B_0[T'-x],B_{\Delta+1}[T'-x]))=((P_i-y,Q_i-y),(P'_i-y,Q'_i-y))$. Let $x,y,i$ be as in the last sentence. For each $z\in spt(P_i)\cap
spt(P'_i)$, there is an $m=m(x,y,z)$ so that $B_1[\Phi^m(T-x+y-z)]=B_1[\Phi^m(T'-x+y-z)]$, by Lemma \ref{stable mfd}, and in fact we can choose this $m$ to depend only on $z$ (and $i$), since if $x',y',i$ are also such that  $((B_0[T-x'],B_{\Delta+1}[T-x']),(B_0[T'-x'],B_{\Delta+1}[T'-x']))=((P_i-y',Q_i-y'),(P'_i-y',Q'_i-y'))$, then
$(B_0[T-x'+y'-z],B_0[T'-x'+y'-z])=(B_0[T-x+y-z],B_0[T'-x+y-z]))$ (this is because $spt(P_i)\cap spt(P'_i)\subset int(spt(Q_i)\cap spt(Q'_i))$). Now if $z'$ is sufficiently close to $z$, then $B_0[\Phi^m(T-x+y-z')]=B_0[\Phi^m(T'-x+y-z')]$. Thus, by compactness of $spt(P_i)\cap spt(P'_i)$, there is an $M=M(i)$ so that
 $B_0[\Phi^M(T-x+y-z)]=B_0[\Phi^M(T'-x+y-z)]$ for all $z\in spt(P_i)\cap spt(P'_i)$ and $x,y$ so that
 $((B_0[T-x],B_{\Delta+1}[T-x]),(B_0[T'-x],B_{\Delta+1}[T'-x]))=((P_i-y,Q_i-y),(P'_i-y,Q'_i-y))$ Let $K=max\{M(1),\ldots,M(n)\}$. Then $B_0[\Phi^K(T-x+y-z)]=B_0[\Phi^K(T'-x+y-z)]$ for all appropriate $x,y,z$: that is, $\Phi^K(T)=\Phi^K(T')$, and since $\Phi$ is a homeomorphism, $T=T'$.
\end{proof}

\begin{lemma}\label{existence of R} If $\Omega=\Omega_{\Phi}$ is a $d$-dimensional non-periodic, FLC, self-similar substitution tiling space, there is an $R$ so that if $T,T'\in\Omega$ are such that $T-x\sim_s T'-x$ for all $x\in\mathbb{R}^d$ with $|x|\le R$, then $B_0[T]=B_0[T']$.
\end{lemma}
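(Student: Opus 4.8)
The plan is to argue by contradiction and reduce to Lemma~\ref{T=T'} via a compactness argument. Suppose no such $R$ exists. Then for each $n\in\N$ there are $T_n\ne T_n'\in\Omega$ with $T_n-x\sim_s T_n'-x$ for all $x$ with $|x|\le n$, yet $B_0[T_n]\ne B_0[T_n']$. Using compactness of $\Omega$, I would pass to a subsequence so that $T_n\to\bar T$ and $T_n'\to\bar T'$, and then aim to prove two things: first, that the origin-disagreement survives in the limit, so $\bar T\ne\bar T'$; and second, that $\bar T-x\sim_s\bar T'-x$ for every $x\in\R^d$. Lemma~\ref{T=T'} would then force $\bar T=\bar T'$, the desired contradiction.

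For the first point I would make the disagreement robust before taking the limit. Since $B_0[T_n]\ne B_0[T_n']$ and $0\in spt(B_0[T_n])\cap spt(B_0[T_n'])$, there is a tile $\tau\in B_0[T_n]\setminus B_0[T_n']$ (so $\tau\notin T_n'$ altogether, as $0\in spt(\tau)$) and a point $p_n\in int(spt(\tau))$ with $|p_n|$ bounded by the maximal prototile diameter. Replacing $T_n,T_n'$ by $T_n-p_n,T_n'-p_n$ shrinks the radius $n$ only by a fixed constant, so I may assume the origin lies in the interior of a tile of $T_n$ that is not a tile of $T_n'$. This is an open condition, hence persists under the convergence $T_n\to\bar T$, $T_n'\to\bar T'$, giving $B_0[\bar T]\ne B_0[\bar T']$ and in particular $\bar T\ne\bar T'$.

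The second point is the heart of the matter, and the main obstacle: stable equivalence is a \emph{global} relation (by Lemma~\ref{stable mfd}, $S\sim_s S'$ only asserts agreement of $B_0[\Phi^m S]$ and $B_0[\Phi^m S']$ for \emph{some} $m$, with no a priori bound), so $\sim_s$ is not open and does not transfer to limits for free. The key observation I would exploit is that the proof of Lemma~\ref{T=T'} actually produces a uniform exponent $K=K(\Omega)$: the finiteness, up to translation, of the stably-related-on-overlap configurations $((P_i,Q_i),(P_i',Q_i'))$, which comes from FLC together with the rigidity of Lemma~\ref{no slip}, yields finitely many classes, each carrying its own power $M(i)$, and $K=\max_i M(i)$ depends only on $\Omega$. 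Reading that proof with attention to which translates are used, one obtains a localized form: there is a fixed radius $\rho=\rho(\Omega)$ so that whenever $S-y\sim_s S'-y$ holds for all $y$ in the ball $\bar B_\rho(x)$, one gets $B_0[\Phi^{K}(S-x)]=B_0[\Phi^{K}(S'-x)]$, i.e. $S-x\sim_s S'-x$; moreover this equality is determined by $S,S'$ on a bounded neighborhood of $x$, by locality of the substitution.

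Granting this localized, uniform-exponent statement, I would finish as follows. Fix $x\in\R^d$ and take $n$ so large that $\bar B_\rho(x)\subset\bar B_n(0)$ and that $T_n,T_n'$ agree with $\bar T,\bar T'$ on a ball about $x$ of radius exceeding the locality constant for $\Phi^{K}$. Applying the localized form to $T_n,T_n'$ gives $B_0[\Phi^{K}(T_n-x)]=B_0[\Phi^{K}(T_n'-x)]$, an equality involving only bounded portions of $T_n,T_n'$ near $x$; letting $n\to\infty$ transfers it to $B_0[\Phi^{K}(\bar T-x)]=B_0[\Phi^{K}(\bar T'-x)]$, so $\bar T-x\sim_s\bar T'-x$. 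As $x$ was arbitrary, Lemma~\ref{T=T'} yields $\bar T=\bar T'$, contradicting $\bar T\ne\bar T'$. I expect the extraction and verification of the uniform exponent $K$ and its localization to be the only delicate step; the compactness passage and the robustness of the origin-disagreement are routine.
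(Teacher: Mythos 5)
Your skeleton (argue by contradiction, extract limits $\bar T,\bar T'$, transfer the stable relation to the limit, invoke Lemma~\ref{T=T'}) is the same as the paper's, but the two steps you identify as the substance of the argument both have genuine gaps, and they are really one gap: nothing in your argument pins down the \emph{relative} position of $T_n$ and $T_n'$. First, ``the origin lies in the interior of a tile of $T_n$ not belonging to $T_n'$'' is not an open condition that passes to limits: convergence in the tiling metric allows $T_n\to\bar T$ and $T_n'\to\bar T'$ with \emph{independent} small translation corrections, so the mismatch can slide away. Concretely, nothing you have established excludes $T_n'=T_n-\delta_n$ with $0<|\delta_n|\to0$ and $T_n-x\sim_s T_n'-x$ for $|x|\le n$ (ruling out exactly this kind of sliding, uniformly in $n$, is morally the content of the lemma, so you cannot assume it away); in that scenario $B_0[T_n]\ne B_0[T_n']$ for every $n$ while $\bar T=\bar T'$, and your contradiction evaporates. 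The paper's proof contains precisely the device you are missing: replace $(T_n,T_n')$ by $(\Phi^{m_n-1}(T_n),\Phi^{m_n-1}(T_n'))$, where $m_n\ge1$ is least with $B_0[\Phi^{m_n}(T_n)]=B_0[\Phi^{m_n}(T_n')]$ (it exists by Lemma~\ref{stable mfd} since $T_n\sim_s T_n'$). This keeps the $0$-patches distinct, does not shrink the radius of stable relation (since $\Lambda$ is expanding), and forces $\Phi(B_0[T_n])$ and $\Phi(B_0[T_n'])$ to contain a common patch, i.e.\ to share tiles; FLC then gives only finitely many pairs $(B_0[T_n],B_0[T_n'])$ up to a \emph{common} translation, so along a subsequence a fixed distinct pair $(P,P')$ survives into the limit and $\bar T\ne\bar T'$.

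Second, the uniform constants $K(\Omega)$, $\rho(\Omega)$ cannot be ``read off'' from the proof of Lemma~\ref{T=T'}: there, the finitely many configurations $((P_i,Q_i),(P_i',Q_i'))$ and the exponents $M(i)$ are attached to one fixed pair $(T,T')$ that is stably related on \emph{all} of $\R^d$, and nothing bounds these data uniformly over all pairs in $\Omega$. A uniform statement of the kind you want is essentially Lemma~\ref{finitely many patch pairs}, which the paper proves \emph{from} Lemma~\ref{existence of R}; so, as written, your route is circular, and making it non-circular (say, by a direct uniform version of Lemma~\ref{no slip}) is genuine work, not bookkeeping. Note also that the paper's transfer step needs no uniform exponent at all: once the relative positions are pinned, for each $r$ FLC forces $B_r[T_{n_i}+x_i]\subset T$ and $B_r[T'_{n_i}+x_i]\subset T'$ for large $i$, and since by Lemma~\ref{stable mfd} whether $U-y\sim_s U'-y$ holds depends only on the pair of $0$-patches $(B_0[U-y],B_0[U'-y])$ (because $B_0[\Phi^m(U-y)]$ is determined by $B_0[U-y]$), the stable relation transfers to the limit \emph{exactly}, not approximately. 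So the missing idea in your proposal is the normalization by $\Phi^{m_n-1}$ together with the FLC pinning it makes possible; with that in hand, both of your problematic steps become unnecessary.
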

\begin{proof} Otherwise, for each $n\in\N$, there are $T_n,T'_n\in\Omega$ with $T_n-x\sim_s T'_n-x$ for all $x$ with $|x|\le n$ and $B_0[T_n]\ne B_0[T'_n]$. There is a smallest $m_n$ so that $B_0[\Phi^{m_n}(T_n)]=B_0[\Phi^{m_n}(T'_n)]$: replace $T_n$ by $\Phi^{m_n-1}(T_n)$ and $T'_n$ by $\Phi^{m_n-1}(T'_n)$. By finite local complexity, there are only finitely many pairs $(B_0[T_n],B_0[T'_n])$, up to translation: say $(B_0[T_{n_i}],B_0[T'_{n_i}])=(P-x_i,P'-x_i)$ with $x_i\to 0$, $T_{n_i}\to T\in\Omega$,
and $T'_{n_i}\to T'\in\Omega$. Since $P\subset T$ and $P'\subset T'$, $T\ne T'$. Fix $r>0$. Since the
tilings $T_{n_i}+x_i$ all share the patch $P$ and the tilings $T'_{n_i}+x_i$ all share the patch $P'$, finite local complexity guarantees that there are just finitely many pairs $(B_r[T_{n_i}+x_i],B_r[T'_{n_i}+x_i])$. It follows
that $B_r[T_{n_i}+x_i]\subset T$ and $B_r[T'_{n_i}+x_i]\subset T'$ for large $i$. If $i$ is also large enough so that $n_i+|x_i|>r$, then $T_{n_i}+x_i-y\sim_sT'_{n_i}+x_i-y$ for all $y\in B_r(0)$ and hence
$T-y\sim_s T'-y$ for all $y\in B_r(0)$. Thus $T-y\sim_s T'-y$ for all $y$ and by Lemma \ref{T=T'} we arrive at the contradiction $T=T'$.
\end{proof}

\begin{lemma}\label{finitely many patch pairs} If $\Omega=\Omega_{\Phi}$ is a $d$-dimensional, non-periodic, FLC, self-similar substitution tiling space, then given $r$ and $\epsilon>0$, there are,
up to translation, only finitely many pairs of (allowed) patches $(P,P')$ with the properties: $diam(P),diam(P')\le r$; $P$ and $P'$ are stably related on $B_\epsilon(0)$.
\end{lemma}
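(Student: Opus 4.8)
The plan is to argue by contradiction, using finite local complexity to reduce to a single pair of translation types, and then to extract a small nonzero \emph{slip} that violates the rigidity behind Lemma~\ref{no slip}. First I would normalize. If $(P,P')$ is stably related on $B_\epsilon(0)$, then by definition $B_\epsilon(0)\subset int(spt(P)\cap spt(P'))$, so $0\in spt(P)\cap spt(P')$ and, since $diam(P),diam(P')\le r$, both supports lie in $\bar B_r(0)$. Suppose there were infinitely many pairwise translation-inequivalent such pairs $(P_n,P_n')$. By FLC there are only finitely many translation types of patches of diameter $\le r$, so after passing to a subsequence I may write $P_n=\hat P+a_n$ and $P_n'=\hat P'+b_n$ for fixed reference patches $\hat P,\hat P'$. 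Because $spt(P_n),spt(P_n')\subset\bar B_r(0)$ the vectors $a_n,b_n$ are bounded, so after a further subsequence $a_n\to a$ and $b_n\to b$; set $w_n:=a_n-b_n\to w$. Since a finite patch has trivial translational stabilizer ($\hat P+u=\hat P\Rightarrow u=0$), two of the pairs are simultaneously-translation equivalent exactly when their offsets $w_n$ agree, so pairwise inequivalence forces the $w_n$ to be pairwise distinct. As distinct points converging to $w$ can be chosen arbitrarily close together, for any threshold I can find indices $m\ne n$ with $x_{mn}:=w_m-w_n$ nonzero and as small as I like, and with $a_m-a_n$ as small as I like.

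Next I would produce two stable relations sharing the same first patch. Translating $(P_n,P_n')$ by $a_m-a_n$ sends it to $(P_m,\,P_m'+x_{mn})$, and since translation carries $B_\epsilon(0)$ to $B_\epsilon(a_m-a_n)\supset B_{\epsilon/2}(0)$ once $m,n$ are large, this translated pair is stably related on $B_{\epsilon/2}(0)$. The original pair $(P_m,P_m')$ is stably related on $B_\epsilon(0)\supset B_{\epsilon/2}(0)$. Fix tilings $S\supset P_m$ and $S'\supset P_m'$. The first relation gives $S-y\sim_s S'-y$ for all $y\in B_{\epsilon/2}(0)$; applying the second relation to $S\supset P_m$ and $S'+x_{mn}\supset P_m'+x_{mn}$ gives $S-y\sim_s (S'+x_{mn})-y$ for all $y\in B_{\epsilon/2}(0)$. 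As $\sim_s$ is an equivalence relation, transitivity yields $S'-y\sim_s (S'+x_{mn})-y$ for all $y\in B_{\epsilon/2}(0)$; that is, the single tiling $S'$ is stably related to its own translate $S'+x_{mn}$ on $B_{\epsilon/2}(0)$, with $x_{mn}$ nonzero but arbitrarily small.

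Finally I would derive the contradiction from this self-slip, which is the main obstacle and the only genuinely new ingredient. This is a ball version of Lemma~\ref{no slip}, proved the same way: by Lemma~\ref{stable mfd} and compactness of $\bar B_{\epsilon/4}(0)$ there is a single $M$ with $B_0[\Phi^M(S'-y)]=B_0[\Phi^M((S'+x_{mn})-y)]$ for all $y\in\bar B_{\epsilon/4}(0)$; using self-similarity, $\Phi^M(S'-y)=\Phi^M(S')-\lambda^M y$, this says that $\Phi^M(S')$ and its translate agree on $\bar B_{\lambda^M\epsilon/4}(0)$, i.e. a patch $Q:=B_{\lambda^M\epsilon/4}[\Phi^M(S')]$ and its translate $Q-\lambda^M x_{mn}$ both occur in $\Phi^M(S')$. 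The ratio of the offset $\lambda^M|x_{mn}|$ to the radius $\lambda^M\epsilon/4$ is $4|x_{mn}|/\epsilon$, independent of $M$, so Lemma~2.4 of \cite{sol} (no large patch recurs in a tiling at small relative offset, the same non-periodicity input used in Lemma~\ref{no slip}) forces $\lambda^M x_{mn}=0$ once $|x_{mn}|<\epsilon/(4N)$, contradicting $x_{mn}\ne0$. Hence only finitely many inequivalent pairs can occur. The routine part is the FLC-plus-compactness bookkeeping; the point needing care is precisely the adaptation of the no-slip estimate from the full overlap to the ball $B_{\epsilon/2}(0)$, which goes through because the entire argument is local near the origin.
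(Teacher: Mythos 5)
Your proof is correct, and it takes a genuinely different route from the paper's. The paper first shows there is a uniform $k=k(\epsilon)$ such that for every stably related pair the substituted patches $\Phi^k(P)$ and $\Phi^k(P')$ must share a tile: otherwise one obtains tilings that are stably equivalent on arbitrarily large balls yet have distinct $0$-patches, contradicting Lemma \ref{existence of R}; finiteness then follows from FLC applied to pairs of bounded-diameter patches sharing a tile, pulled back through the injectivity of $\Phi^k$ on allowed patches. You never substitute the patches forward: instead, FLC plus pigeonhole reduces infinitely many inequivalent pairs to a single pair of translation types $(\hat P,\hat P')$ with pairwise distinct, convergent offsets $w_n$; transitivity of $\sim_s$ converts two nearby pairs into a single tiling $S'$ stably related on a ball to its own translate $S'+x_{mn}$ with $x_{mn}$ nonzero and arbitrarily small; and a localized (ball) version of Lemma \ref{no slip} --- proved exactly as the paper proves that lemma, via Lemma \ref{stable mfd}, compactness, and Lemma 2.4 of \cite{sol} --- rules out such a self-slip. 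What the paper's route buys is brevity: it black-boxes the rigidity into Lemma \ref{existence of R}, already established (through Lemma \ref{T=T'} and Lemma \ref{no slip}) and needed elsewhere. What your route buys is independence from that chain: you use only Lemma \ref{stable mfd} and Solomyak's rigidity directly, you avoid invoking recognizability (injectivity of $\Phi^k$ on patches), and you make explicit the useful observation that the no-slip argument localizes to a ball rather than requiring stable relatedness on the full overlap. Your identification of that localization as the one point needing care is apt, and your compactness step there is asserted at the same level of detail as the paper's own proof of Lemma \ref{no slip}, so it is not a gap.
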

\begin{proof} First, there is a $k=k(\epsilon)$ so that for any such $(P,P')$, $\Phi^k(P)\cap \Phi^k(P')\ne\emptyset$. Otherwise, there are $(P_k,P'_k)$ and tilings $T_k\supset \Phi^k(P_k),T'_k\supset \Phi^k(P'_k)$ with $T_k-x\sim_sT'_k-x$ for all $x\in B_{\lambda^k\epsilon}(0)$ and with $B_0[T_k]\ne B_0[T'_k]$. This violates Lemma \ref{existence of R}. Now, for this $k$ and for all such pairs $(P,P')$, the elements of the pairs $(\Phi^k(P),\Phi^k(P'))$ share a tile and have diameters bounded by $\lambda^kr$. By finite local complexity, there are, up to translation, only finitely many such $(\Phi^k(P),\Phi^k(P'))$ and hence, only finitely many $(P,P')$.
\end{proof}

\begin{proposition}\label{finitely many asymptotic partners} If $\Omega$ is a non-periodic, FLC, self-similar substitution tiling space and $T\in\Omega$ is a branch point then there are finitely many $T'\in\Omega$ such that $(T,T')$ is a branch pair.
\end{proposition}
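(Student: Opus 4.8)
The plan is to fix the branch point $T$ and to show that each branch partner $T'$ is pinned down, up to finitely many choices, by a bounded amount of data along the hyperplane at which asymptoticity begins; finiteness then follows from FLC together with the rigidity lemmas already established. First I would normalize the geometry. If $(T,T')$ is a branch pair with half-space $H=\{x:\langle x,u\rangle>0\}$, then $T-x\sim_s T'-x$ for all $x\in H$, so a translated application of Lemma \ref{existence of R} gives $B_0[T-y]=B_0[T'-y]$ whenever $\langle y,u\rangle>R$; thus $T$ and $T'$ literally agree on the half-space $\{\langle y,u\rangle>R\}$, whose bounding hyperplane lies within the universal distance $R$ of the origin. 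Since $T\nsim_s T'$ forces $B_0[T']\neq B_0[T]$, the origin sits within distance $R$ of the interface, and by FLC there are only finitely many possibilities for $B_0[T']$.

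The engine of the argument is a self-similarity (inflation) step. Because $\Phi(S-x)=\Phi(S)-\Lambda x$ and $\Lambda=\lambda I$ maps each half-space $\{\langle x,u\rangle>c\}$ to $\{\langle x,u\rangle>\lambda c\}$, I claim: if $S_1,S_2\in\OP$ agree on some half-space $\{\langle x,u\rangle>-\eta\}$ with $\eta>0$, then $S_1\sim_s S_2$. Indeed $\Phi^n(S_1)$ and $\Phi^n(S_2)$ then agree on $\{\langle x,u\rangle>-\lambda^n\eta\}\supset B_{\lambda^n\eta}(0)$, so $d(\Phi^n(S_1),\Phi^n(S_2))\to 0$. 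Applied to two partners of $T$ sharing the direction $u$ (each equal to $T$ on $\{\langle x,u\rangle>0\}$), this shows that if they ever agree strictly below the interface they are stably equivalent; hence partners that are mutually $\nsim_s$ must already differ along the bounding hyperplane itself. Combined with the locality of $\sim_s$ furnished by Lemma \ref{stable mfd}, the same inflation upgrades the agreement above to agreement on the full open half-space $H$.

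Next I would bound the local interface data. For $z=(R+2)u\in H$ one has $B_1(z)\subset H$, so for a universal radius $r$ (depending only on $R$ and the expansion) the bounded-diameter patches $B_r[T]$ and $B_r[T']$ are stably related on $B_1(z)$; translating by $z$ and invoking Lemma \ref{finitely many patch pairs} shows that $(B_r[T],B_r[T'])$ takes only finitely many values up to translation. As $T$, and hence $B_r[T]$, is fixed and aperiodic, this pins the translation, and therefore the direction $u$ and the patch $B_r[T']$, to finitely many values. Lemma \ref{no slip} ensures this data varies rigidly: no partner is an arbitrarily small slip of another, so distinct partners are genuinely separated at the interface.

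The main obstacle is the final step: upgrading ``finitely many interface germs'' to ``finitely many partners,'' that is, excluding two distinct partners $T'\neq T''$ that share all interface data yet differ deep inside the complementary half-space $\{\langle x,u\rangle\le 0\}$. Here I would note that a common interface germ forces $T'$ and $T''$ to agree on a half-space reaching strictly below the hyperplane, so by the inflation step $T'\sim_s T''$; I would then combine the expanding substitution with non-periodicity, through Lemmas \ref{no slip} and \ref{T=T'}, to promote stable equivalence plus interface agreement to genuine equality $T'=T''$. Controlling the tilings arbitrarily far down this half-space is the delicate point, since self-similarity rescales the interface and does not by itself carry literal agreement into the deep interior; this is where I expect the real work to lie, and where a compactness argument — extracting a limit partner and applying the no-slip rigidity of Lemma \ref{no slip} to the convergent tail — must be fused with the inflation step to close the count.
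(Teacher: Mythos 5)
You have correctly executed the easy half of the argument: Lemma \ref{existence of R} does force each partner $T'$ to agree with $T$ literally on a half-space within the universal distance $R$ of the origin, and Lemma \ref{finitely many patch pairs} (plus the fact that a finite patch admits no nonzero self-translation, which anchors the translation class to the actual patch $B_r[T]$) does bound the number of possible interface germs $B_r[T']$ at any fixed scale $r$. But the proof stops exactly where the proposition's content lies, and you say so yourself: nothing in your argument excludes infinitely many distinct partners all sharing the \emph{same} germ $B_r[T']$ while differing arbitrarily deep in the non-asymptotic half-space $\{\langle x,u\rangle\le 0\}$. Your proposed repairs cannot close this. Lemma \ref{T=T'} requires $T'-x\sim_s T''-x$ for \emph{all} $x\in\R^2$, and stable equivalence deep in the lower half-space is precisely what is unavailable. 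Lemma \ref{no slip} is also inapplicable to the actual failure mode: if infinitely many partners existed, compactness would produce a limit partner $T^*$ with which they agree \emph{exactly} (translation vectors $x_i=0$, since the germs coincide) on larger and larger balls while differing far below the hyperplane — there is no ``slip'' for that lemma to forbid. All of your tools (inflation $\Phi^n$, FLC at a fixed scale, no-slip, compactness) operate at or above a bounded scale and never bring the deep interior into view. (A smaller error: your claim that stable equivalence on $H$ upgrades to literal agreement on the full open half-space $H$ is false — Lemma \ref{existence of R} only applies at points $x$ with $B_R(x)\subset H$, so agreement is only forced on $\{\langle y,u\rangle>R\}$; this one is harmless for your counting, though.)

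The missing idea — and the engine of the paper's proof — is \emph{desubstitution}. Since stable equivalence is $\Phi$-invariant and $\Lambda^{-m}H=H$, each pair $(\Phi^{-m}(T),\Phi^{-m}(T'_k))$ is again a branch pair, so Lemma \ref{existence of R} applies at \emph{every} level $m$: all the desubstituted partners $\Phi^{-m}(T'_k)$ share a tile within a universal radius $R$ of the origin. FLC then gives a single pigeonhole constant $M$ such that among any $M$ tilings sharing a tile in $B_R$, two have equal $B_1$-patches. Now if there were $M$ distinct partners, choose $r$ so that their $B_r$-patches are pairwise distinct and $m$ with $\lambda^m>r$; pigeonholing the tilings $\Phi^{-m}(T'_{k_i})$ yields $i\ne j$ with $B_1[\Phi^{-m}(T'_{k_i})]=B_1[\Phi^{-m}(T'_{k_j})]$, and re-inflating gives $B_r[T'_{k_i}]=B_r[T'_{k_j}]$, a contradiction. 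The point is that $\Phi^{-m}$ contracts the plane toward the origin, so differences between partners arbitrarily deep in the lower half-space are pulled into the bounded window $B_1(0)$ where FLC can count them — exactly the mechanism your level-zero argument lacks. You should incorporate this two-scale (desubstitute, pigeonhole, re-inflate) structure; once it is in place, your interface normalization becomes unnecessary except as the anchoring step.
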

\begin{proof} Let $\Phi$ be the substitution with associated inflation $\lambda$. Suppose $(T,T_k')$, $k\in\mathbb{N}$ are branch pairs. It follows from Lemma \ref{existence of R} and finite local complexity that there is a subsequence $\{T'_{k_i}\}$ and an $R$ so that, for each $m\in\mathbb{N}$, the $B_R[\Phi^{-m}(T'_{k_i})]$
all share a tile. There is $M$ so that if $\{T_1,\ldots,T_{M}\}$ are any $M$ tilings that satisfy $\cap_{i+1}^{M}B_R[T_i]\ne\emptyset$, then $B_1[T_i]=B_1[T_j]$ for some $i\ne j\in\{1,\ldots,M\}$. If the $T'_{k_i}$, $i=1,\ldots,M$, are all distinct, then there is $r$ so that the patches $B_r[T'_{k_i}]$, $i=1,\ldots,M$ are all distinct. Let $m\in\mathbb{N}$ be large enough so that $\lambda^m>r$. There are then $i\ne j\in\{1,\ldots,M\}$ so that $B_1[\Phi^{-m}(T'_{k_i})]=B_1[\Phi^{-m}(T'_{k_j})]$. But then $B_r[T'_{k_i}]=B_r[T'_{k_j}]$, contradicting the choice of $r$.
\end{proof}

\begin{corollary}\label{periodic branch points} If $(T,T')$ is a branch pair and $T$ is $\Phi$-periodic, then so is $T'$.
\end{corollary}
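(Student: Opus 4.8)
The plan is to exhibit $\Phi^p$ (for $p$ a period of $T$) as a permutation of the finite set of branch partners of $T$, and then read off periodicity of $T'$ from the finite order of that permutation. The only substantive point is that $\Phi$ carries branch pairs to branch pairs; everything else is a pigeonhole argument resting on Proposition \ref{finitely many asymptotic partners}.

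First I would check that $\Phi$, and hence every power of $\Phi$, sends branch pairs to branch pairs. Two facts make this work. One is the intertwining relation $\Phi(S-x)=\Phi(S)-\Lambda x$ for every tiling $S$ and $x\in\R^d$, where $\Lambda$ is the expansion of $\Phi$ (so $\Lambda=\lambda\,\mathrm{Id}$ in the self-similar case); the other is that the homeomorphism $\Phi$ carries stable manifolds to stable manifolds, $\Phi(W^s(S))=W^s(\Phi(S))$, which is immediate from the definition $W^s(S)=\{S':d(\Phi^n(S'),\Phi^n(S))\to0\}$. Combining these gives, for all tilings $S,S'$ and all $x$, the equivalence $S-x\sim_s S'-x \iff \Phi(S)-\Lambda x\sim_s\Phi(S')-\Lambda x$. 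Therefore, if $(T,T')$ is a branch pair with $T\nsim_s T'$ and $T-x\sim_s T'-x$ for all $x$ in an open half-space $H$, then $\Phi(T)\nsim_s\Phi(T')$ while $\Phi(T)-y\sim_s\Phi(T')-y$ for all $y$ in the open half-space $\Lambda H$; thus $(\Phi(T),\Phi(T'))$ is again a branch pair.

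Next, let $p\ge1$ satisfy $\Phi^p(T)=T$, and set $\mathcal{S}:=\{T''\in\Omega:(T,T'')\in\mathcal{BP}(\Phi)\}$. By Proposition \ref{finitely many asymptotic partners}, $\mathcal{S}$ is finite. Applying the previous paragraph to $\Phi^p$: if $T''\in\mathcal{S}$ then $(\Phi^p(T),\Phi^p(T''))=(T,\Phi^p(T''))$ is a branch pair, so $\Phi^p(T'')\in\mathcal{S}$; that is, $\Phi^p$ maps $\mathcal{S}$ into itself. Since $\Phi$ is a homeomorphism, $\Phi^p$ is injective, so $\Phi^p|_{\mathcal{S}}$ is an injective self-map of a finite set, hence a bijection, hence a permutation of finite order. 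Thus there is $q\ge1$ with $\Phi^{pq}|_{\mathcal{S}}=\mathrm{id}$, and in particular $\Phi^{pq}(T')=T'$, so $T'$ is $\Phi$-periodic.

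I expect the only place requiring care to be the first step, namely the clean interaction of $\Phi$ with translation (via $\Phi(S-x)=\Phi(S)-\Lambda x$) and with stable equivalence (via $\Phi(W^s(S))=W^s(\Phi(S))$), together with the observation that $\Lambda$ maps an open half-space to an open half-space so that the asymptotic-in-a-half-space condition is preserved. Once the branch-pair structure is known to be $\Phi$-invariant, the finiteness supplied by Proposition \ref{finitely many asymptotic partners} forces periodicity by a standard permutation-of-a-finite-set argument.
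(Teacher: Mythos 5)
Your proof is correct, and it is precisely the argument the paper intends: Corollary \ref{periodic branch points} is stated without proof as an immediate consequence of Proposition \ref{finitely many asymptotic partners}, the point being exactly that $\Phi$ carries branch pairs to branch pairs, so a power of $\Phi$ fixing $T$ permutes the finite set of branch partners of $T$. Your verification that $\Phi$ preserves branch pairs (via $\Phi(S-x)=\Phi(S)-\Lambda x$ and invariance of stable manifolds) fills in the details the paper leaves implicit.
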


In case $T$ is a branch point and $T$ is $\Phi$-periodic, $T$ will be called a {\em periodic branch point}, a branch pair $(T,T')$ will be called a {\em periodic branch pair} and we'll denote the collection of all periodic branch pairs by $\mathcal{PBP}(\Phi)$.

\begin{lemma}\label{sector open} Suppose that $\Omega=\Omega_{\Phi}$ is a $d$-dimensional, non-periodic, FLC, self-similar substitution tiling space. If $(T,T')\in\mathcal{PBP}(\Phi)$ is a periodic branch pair then $S(T,T')$ is open in $\mathbb{S}^{d-1}$.
\end{lemma}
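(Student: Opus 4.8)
The plan is to reduce the statement to an elementary openness argument that exploits the self-similar scaling. Set $U:=\{x\in\R^d: T-x\sim_s T'-x\}$, so that, by definition, $S(T,T')$ is a connected component of the direction set $D:=\{v\in\mathbb{S}^{d-1}: tv\in U\text{ for all }t>0\}$. Since $\mathbb{S}^{d-1}$ is a manifold, hence locally connected, every connected component of an open subset is itself open; so it will suffice to prove that $D$ is open. To do that I would first establish two properties of $U$: that it is open, and that it is invariant under the dilation $x\mapsto\lambda^p x$.

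Openness of $U$ is immediate: by the remark following Lemma \ref{stable mfd}, the relation $\nsim_s$ is preserved under limits, so $\R^d\setminus U=\{x: T-x\nsim_s T'-x\}$ is closed and $U$ is open. For the dilation invariance I would use periodicity. Since $(T,T')\in\mathcal{PBP}(\Phi)$, there is $p\ge 1$ with $\Phi^p(T)=T$ and $\Phi^p(T')=T'$ (Corollary \ref{periodic branch points} guarantees that $T'$ is $\Phi$-periodic once $T$ is). Because $\Phi$ is a homeomorphism, stable equivalence is $\Phi$-invariant, $A\sim_s B\iff\Phi A\sim_s\Phi B$ (both sides assert $d(\Phi^nA,\Phi^nB)\to 0$), and self-similarity gives $\Phi(S-v)=\Phi(S)-\lambda v$. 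Applying $\Phi^p$ to $T-x$ and $T'-x$ and using $\Phi^p(T)=T$, $\Phi^p(T')=T'$ then yields $T-x\sim_s T'-x\iff T-\lambda^p x\sim_s T'-\lambda^p x$; that is, $x\in U\iff\lambda^p x\in U$, so $U$ is invariant under multiplication by $\lambda^{kp}$ for every $k\in\Z$.

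With these in hand the openness of $D$ should follow by compactness. Fix $v_0\in D$. The arc $K:=\{tv_0:1\le t\le\lambda^p\}$ is a compact subset of the open set $U$, so some tube $\{y:\mathrm{dist}(y,K)<\delta\}$ lies in $U$. If $|v-v_0|<\delta/\lambda^p$, then $|tv-tv_0|=t|v-v_0|<\delta$ for every $t\in[1,\lambda^p]$, whence $\{tv:1\le t\le\lambda^p\}\subset U$. Since $U$ is invariant under the dilations $\lambda^{kp}$ and the full ray $\{tv:t>0\}$ is the union over $k\in\Z$ of the dilated arcs $\lambda^{kp}\{tv:1\le t\le\lambda^p\}$, the entire ray lies in $U$, i.e. $v\in D$. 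Thus $D$ is open and $S(T,T')$, being a component of $D$, is open in $\mathbb{S}^{d-1}$.

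The one conceptual obstacle I anticipate is that the defining condition for $D$ quantifies over an entire ray (``for all $t>0$''), and such conditions are in general not open even when $U$ is: a horn-shaped region that pinches toward infinity would produce a non-open $D$. The role of periodicity, routed through the self-similar dilation invariance of $U$, is precisely to exclude this, since it collapses the infinite-ray condition to a condition on the single compact fundamental arc $K$. Everything else is routine compactness, so I expect this reduction to be the crux of the argument.
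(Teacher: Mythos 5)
Your proposal is correct and takes essentially the same route as the paper's proof: both rest on openness of the stable relation (the consequence of Lemma \ref{stable mfd}), compactness of a fundamental arc $\{tv:1\le t\le\lambda^p\}$ for the dilation, and $\Phi$-periodicity plus self-similarity to propagate stable equivalence along the whole ray. The only cosmetic differences are that the paper normalizes to $\Phi$-fixed tilings (so $p=1$, arc $[1,\lambda]$) and absorbs your component/local-connectedness step into the observation that the $\epsilon$-ball about $v$ is connected and meets $S(T,T')$.
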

\begin{proof}  With no loss of generality, we may assume that $T$ and $T'$ are fixed by $\Phi$. Let $\lambda$ be the expansion factor for $\Phi$. Given $v\in S(T,T')$ and $t\in [1,\lambda]$
there is $\epsilon(t)>0$ so that $T-tv-x\sim_s T'-tv-x$ for all $x\in B_{\epsilon(t)}(0)$ (Lemma \ref{stable mfd}). From compactness of $[1,\lambda]$, there is $\epsilon>0$ so that if $w\in \mathbb{S}^{d-1}\cap B_{\epsilon}(v)$ then $T-tw\sim_s T'-tw$ for all $t\in [1,\lambda]$. For such $w$, any $n\in\Z$, and $t\in [1,\lambda]$, $T-\lambda^ntw=\Phi^n(T-tw)\sim_s \Phi^n(T'-tw)=T'-\lambda^ntw$. Thus $T-tw\sim_s T'-tw$
for all $t\in \cup_{n\in\Z}\lambda^n[1,\lambda]=\R^+$. That is,  $\mathbb{S}^{d-1}\cap B_{\epsilon}(v)\subset S(T,T')$.
\end{proof}

\begin{lemma}\label{finitely many patches} Suppose that $\Omega=\Omega_{\Phi}$ is a non-periodic, FLC,  self-similar substitution tiling space. Then for each $r$ there are, up to translation, only finitely many pairs of patches
of the form $(B_r[T],B_r[T'])$ with $(T,T')\in\mathcal{BP}(\Phi)$. Furthermore, if $(T_n,T'_n)\in\mathcal{BP}(\Phi)$ and $(T_n,T'_n)\to (T,T')$ in $\Omega\times\Omega$, then, for sufficiently large $n$, there
are $x_n\to 0$ so that $(B_r[T_n],B_r[T'_n])=(B_r[T-x_n],B_r[T'-x_n])$.
\end{lemma}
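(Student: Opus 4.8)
The plan is to obtain the finiteness assertion from Lemma~\ref{finitely many patch pairs} and the rigidity assertion (the ``furthermore'') from Lemma~\ref{existence of R}, with finite local complexity supplying the no-slip constants.

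\emph{Finiteness.} For a branch pair $(T,T')$ with defining half-space $H=\{x:\langle x,u\rangle>0\}$ one has $B_1(2u)\subset H$, so $(T-2u)-x\sim_s(T'-2u)-x$ for all $x\in B_1(0)$. Since whether $T-x\sim_s T'-x$ is determined by the tiles of $T$ and $T'$ at $x$ (Lemma~\ref{stable mfd}), the patches $P:=B_{r+2}[T-2u]$ and $P':=B_{r+2}[T'-2u]$ are stably related on $B_1(0)$. By Lemma~\ref{finitely many patch pairs}, applied with $\epsilon=1$ and the diameter bound for $B_{r+2}[\,\cdot\,]$, the pair $(P,P')$ ranges over only finitely many translation classes as $(T,T')$ runs through $\mathcal{BP}(\Phi)$. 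Because $B_r(0)\subset B_{r+2}(2u)$, the patch $B_r[T]-2u$ is a subset of the tiles of $P$, and similarly $B_r[T']-2u\subset P'$; as a finite patch has only finitely many sub-patches, $(B_r[T]-2u,B_r[T']-2u)$, hence $(B_r[T],B_r[T'])$, lies in one of finitely many translation classes.

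\emph{Rigidity.} First I would promote asymptoticity to genuine tile agreement: letting $u_n$ be a normal to the half-space of $(T_n,T_n')$ and $R_0$ the constant of Lemma~\ref{existence of R}, applying that lemma to $T_n-p,T_n'-p$ at points $p$ with $\langle p,u_n\rangle>R_0$ gives $B_0[T_n-p]=B_0[T_n'-p]$, so $T_n$ and $T_n'$ share every tile in $H_n:=\{x:\langle x,u_n\rangle>R_0\}$. The convergences $T_n\to T$ and $T_n'\to T'$ give, for each fixed large $\rho$ and all large $n$, small vectors $x_n,y_n\to 0$ with $B_\rho[T_n]=B_\rho[T-x_n]$ and $B_\rho[T_n']=B_\rho[T'-y_n]$; these are unique once small, by the finite-local-complexity constant (a tiling cannot agree with a small nonzero translate of itself at the origin; cf. the proof of Lemma~\ref{asymptotic direction}). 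Transporting the agreement on $H_n$ through these matchings yields $B_0[T-x_n-w]=B_0[T'-y_n-w]$ for all $w\in H_n\cap B_{\rho-1}(0)$; passing to a subsequence with $u_n\to u$ and letting $n\to\infty$ at tile-interior points also shows that $T$ and $T'$ agree on $\{x:\langle x,u\rangle>R_0\}$.

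Finally I would fix a point $q\approx(R_0+2)u$ in a tile interior lying, for large $n$, in both agreement regions, and set $w=q-y_n$: the transported relation gives $B_0[T-q-(x_n-y_n)]=B_0[T'-q]$, while agreement of $T$ and $T'$ at $q$ gives $B_0[T'-q]=B_0[T-q]$. Hence $B_0[T-q-(x_n-y_n)]=B_0[T-q]$, and the no-slip constant forces $x_n=y_n$ for large $n$. Taking the common value as $x_n$ and restricting to radius $r$ gives the claim; since every subsequence admits a further subsequence along which this runs, the conclusion holds for all large $n$. The main difficulty is exactly this last coincidence $x_n=y_n$: the two tilings of a branch pair converge independently, and what forces their matching translates to agree is the rigid co-registration coming from genuine tile agreement on a half-space (Lemma~\ref{existence of R}) together with the impossibility of small self-matchings.
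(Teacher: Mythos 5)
Your proof is correct, but only the first half follows the paper's route. For the finiteness statement the paper simply says it ``follows immediately from Lemma \ref{finitely many patch pairs}'', and your shift by $2u$ into the asymptotic half-space---so that $(B_{r+2}[T-2u],B_{r+2}[T'-2u])$ is stably related on $B_1(0)$ and $(B_r[T]-2u,B_r[T']-2u)$ sits inside it as a pair of sub-patches---is exactly the bookkeeping needed to make that application legitimate; so there you are filling in the paper's argument, not replacing it. The ``furthermore'' half is genuinely different. The paper argues by pigeonhole from the first half: writing $(B_r[T_n],B_r[T'_n])=(B_r[T-x_n],B_r[T'-y_n])$, it asserts that infinitely many distinct values of $x_n-y_n$ would yield infinitely many translationally inequivalent pairs, and then a convergent sequence taking finitely many values is eventually $0$. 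You never invoke the finiteness half; instead you upgrade the branch relation to literal tile agreement of $T_n$ and $T'_n$ on $\{x:\langle x,u_n\rangle>R_0\}$ (Lemma \ref{existence of R}), transport it through the matchings $B_\rho[T_n]=B_\rho[T-x_n]$, $B_\rho[T'_n]=B_\rho[T'-y_n]$, and force $x_n=y_n$ with the FLC no-slip constant at a tile-interior point. What your route buys is that it makes explicit the rigidity mechanism the paper's one-line claim leaves implicit: a translational equivalence $z$ between two pairs is a priori only bounded (by roughly twice the maximal tile diameter), not small, and repetitivity lets a patch of $T$ recur at nearby locations, so ``distinct offsets give inequivalent pairs'' is not immediate from FLC alone---what rules out such an equivalence is precisely the half-space of literally shared tiles that co-registers the two coordinates of each pair, which is what you construct. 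The cost is length, plus one compressed step: ``letting $n\to\infty$ at tile-interior points'' should be expanded to note that the tiles $\tau_0-(x_n-y_n)$ all lie in the fixed tiling $T'-q$ and are small translates of one another, hence by disjointness of tile interiors are eventually a single fixed tile, necessarily $\tau_0$ since $x_n-y_n\to0$; once that is said, agreement of $T$ and $T'$ at $q$ and the equality $x_n=y_n$ come out simultaneously, so your closing no-slip step is strictly speaking redundant (though harmless), and your subsequence wrap-up is valid because the small matching vectors $x_n,y_n$ are unique and hence independent of the subsequence chosen.
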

\begin{proof}
The first statement follows immediately from Lemma \ref{finitely many patch pairs}.

Suppose that $(T_n,T'_n)\in\mathcal{BP}(\Phi)$ and $(T_n,T'_n)\to (T,T')$ in $\Omega\times\Omega$ and let $r$ be given.  There are then $x_n$ and $y_n$ so that $B_r[T_n]=B_r[T-x_n]$ and $B_r[T'_n]=B_r[T'-y_n]$ for all sufficiently large $n$, with $x_n,y_n\to 0$. If $x_n-y_n\to 0$ takes on infinitely many different values, we would have infinitely many translationally independent pairs $(B_r[T_n],B_r[T'_n])$.
Thus $x_n=y_n$ for sufficiently large $n$.
\end{proof}

\begin{lemma}\label{periodic tilings} Suppose that $\Omega=\Omega_{\Phi}$ is a non-periodic, FLC,  self-similar substitution tiling space and $T\in\Omega$ has the property that $B_r[\Phi^{-n}(T)]=B_r[\Phi^{-n_1}(T)]$ for some $r\ge0$,
$n_1\in\N$, and for infinitely many $n\in\N$. Then $T$ is $\Phi$-periodic.
\end{lemma}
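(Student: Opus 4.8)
The plan is to show that the recurring central pattern forces $T$ itself to satisfy $\Phi^k(T)=T$ for a suitable $k>0$. Write $P:=B_r[\Phi^{-n_1}(T)]$ and let $N:=\{n\in\N:B_r[\Phi^{-n}(T)]=P\}$, which is infinite by hypothesis. Since $\Lambda$ (multiplication by $\lambda>1$) fixes the origin and $spt(P)$ contains a ball $B_\epsilon(0)$ with $\epsilon>0$ (for $r>0$ take $\epsilon=r$; for $r=0$ use that the tiles through the origin cover a neighborhood of it), each inflated patch $\Phi^j(P)$ has support $\Lambda^j(spt(P))\supseteq B_{\lambda^j\epsilon}(0)$, so it covers arbitrarily large central balls as $j\to\infty$. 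Note also that for every $m\in N$ we have $P\subseteq\Phi^{-m}(T)$, hence $\Phi^m(P)\subseteq T$. Throughout I will use the elementary fact that if $Q\subseteq Q'$ are patches with $spt(Q)\supseteq B_{R+\Delta}(0)$, where $\Delta$ bounds the prototile diameters, then $B_R[Q]=B_R[Q']$: a tile of $Q'$ meeting $B_R(0)$ lies in $B_{R+\Delta}(0)\subseteq spt(Q)$, and topological regularity forces it to belong to $Q$.

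The crux is to promote the radius-$r$ recurrence into a genuine substitution seed, namely to produce $k>0$ with $P\subseteq\Phi^k(P)$. First I would choose $n<n'$ in $N$ with $k:=n'-n$ large enough that $\lambda^k\epsilon\ge r+\Delta$. Applying $\Phi^k$ to the containment $P\subseteq\Phi^{-n'}(T)$ gives $\Phi^k(P)\subseteq\Phi^{-n}(T)$, and since $spt(\Phi^k(P))\supseteq B_{\lambda^k\epsilon}(0)\supseteq B_{r+\Delta}(0)$, the elementary fact yields $B_r[\Phi^{-n}(T)]=B_r[\Phi^k(P)]$. As $n\in N$ the left-hand side is $P$, so $P=B_r[\Phi^k(P)]\subseteq\Phi^k(P)$. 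This is the step I expect to be the main obstacle: it is where the self-similarity (the origin-fixing inflation) and finite local complexity must be combined just so, and where one must be careful that the inflated patch really covers a ball centered at the origin rather than merely a large region of unspecified position.

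With $P\subseteq\Phi^k(P)$ in hand, applying $\Phi^m$ gives the nesting $\Phi^m(P)\subseteq\Phi^{m+k}(P)$ for every $m$, and I would finish by showing $B_R[T]=B_R[\Phi^k(T)]$ for every $R$. Fix $R$ and choose $m\in N$ with $\lambda^m\epsilon\ge R+\Delta$. Then $\Phi^m(P)\subseteq T$ together with $spt(\Phi^m(P))\supseteq B_{R+\Delta}(0)$ gives $B_R[T]=B_R[\Phi^m(P)]$; the nesting $\Phi^m(P)\subseteq\Phi^{m+k}(P)$ gives $B_R[\Phi^m(P)]=B_R[\Phi^{m+k}(P)]$; and $\Phi^{m+k}(P)=\Phi^k(\Phi^m(P))\subseteq\Phi^k(T)$ gives $B_R[\Phi^{m+k}(P)]=B_R[\Phi^k(T)]$, each time via the elementary fact. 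Hence $B_R[T]=B_R[\Phi^k(T)]$ for all $R$, so $T=\Phi^k(T)$ and $T$ is $\Phi$-periodic.
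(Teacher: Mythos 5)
Your proof is correct, and while it runs on the same two engines as the paper's -- recurrence of the central patch $P=B_r[\Phi^{-n_1}(T)]$ plus origin-fixing inflation yields a seed containment $P\subset\Phi^k(P)$, and the inflated seeds $\Phi^m(P)$ lie inside $T$ at every recurrence time $m$ while covering arbitrarily large central balls -- the execution is genuinely different, and simpler. The paper takes $m$ to be the \emph{minimal} exponent with $P\subset\Phi^m(P)$, argues that this $m$ divides every gap $n_i-n_1$, and uses that divisibility to form the auxiliary tiling $\bar{T}=\Phi^{n_1}\bigl(\bigcup_{k}\Phi^{mk}(P)\bigr)$, which is $\Phi^m$-invariant by construction and is then shown to be contained in, hence equal to, $T$; the divisibility is exactly what guarantees that this nested union over all multiples of $m$ is exhausted by the terms $\Phi^{n_i-n_1}(P)$ that are known to sit inside $\Phi^{-n_1}(T)$. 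You dispense with minimality, divisibility, and the auxiliary tiling altogether: your $k$ is any sufficiently large difference of two recurrence times, and you compare $T$ with $\Phi^k(T)$ directly via the chain $B_R[T]=B_R[\Phi^m(P)]=B_R[\Phi^{m+k}(P)]=B_R[\Phi^k(T)]$, with $m$ a recurrence time chosen large relative to $R$. The paper's route yields in addition the arithmetic statement $m\mid n_i-n_1$ for the minimal $m$; yours buys economy and robustness, since the divisibility claim (the one delicate, tersely justified step in the paper's construction) never arises, and the lemma only asserts periodicity under \emph{some} power. One cosmetic point: when $r=0$, a tile containing the origin has support in the \emph{closed} ball of radius $\Delta$, so in your ``elementary fact'' and in the choices of $k$ and $m$ you should ask for strict inequalities ($\lambda^k\epsilon>r+\Delta$, $\lambda^m\epsilon>R+\Delta$); this costs nothing, as both exponents can always be increased.
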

\begin{proof} Say $B_r[\Phi^{-n_i}(T)]=B_r[\Phi^{-n_1}(T)]=:P$ for $i\in\N$. Let $m=min\{n\in\N:P\subset\Phi(P)\}$. Since $\Phi(B_r[T'])\supset B_{\lambda r'}[\Phi(T')]\supset B_r[\Phi(T')]$ for any $T'\in\Omega$, where $\lambda$ is the expansion factor of $\Phi$ and $r'=r$ if $r>0$ and, if $r=0$ then $r'>0$ is such that $B_{r'}(0)\subset spt(B_0[T'])$, we see that $m$ divides $n_i-n_1$ for all $i$. Say $n_i=n_1+mk_i$ and let $\bar{T}\in\Omega$ be defined by $\bar{T}:= \Phi^{n_1}(\cup_{k\in\N}\Phi^{mk}(B_r[\Phi^{-n_1}[T]))=\cup_{k\in\N}\Phi^{mk}(B_r[T])$. Then $\Phi^m(\bar{T})=\bar{T}$. Since the union in the definition of $\bar{T}$ is nested and $k_i\to\infty$, $\bar{T}= \Phi^{n_1}(\cup_{i\in\N}\Phi^{mk_i}(B_r[\Phi^{-n_1}(T)]))=
\Phi^{n_1}(\cup_{i\in\N}\Phi^{mk_i}(B_r[\Phi^{-n_1-mk_i}(T)]))\subset \Phi^{n_1}(\cup_{i\in\N}B_{\lambda^{mk_i}r'}[\Phi^{-n_1}(T)])\subset \Phi^{n_1}(\Phi^{-n_1}(T))=T$. Thus $T=\bar{T}$ is periodic.
\end{proof}

\begin{proposition}\label{periodic branch pairs exist} Suppose that $\Omega=\Omega(\Phi)$ is a $d$-dimensional, non-periodic, FLC,  self-similar substitution tiling space and $S\subset\mathbb{S}^{d-1}$ contains a
closed hemisphere in its interior, then any branch pair $(T,T')$ with
$S\subset S(T,T')$ is periodic and there are at most finitely many such.
\end{proposition}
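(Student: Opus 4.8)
The plan is to exploit three things in turn: that the entire configuration is carried to an equivalent one by $\Phi$, that the hypothesis on $S$ pins the branch point at the origin, and that Lemma \ref{periodic tilings} converts such pinning into $\Phi$-periodicity. First I would record the $\Phi$-invariance. Since $\Phi(\bar T-x)=\Phi(\bar T)-\lambda x$ and stable equivalence is defined through the $\Phi$-dynamics, one checks $\bar T-x\sim_s\bar T'-x$ iff $\Phi(\bar T)-\lambda x\sim_s\Phi(\bar T')-\lambda x$; substituting $x=tv$ and rescaling $t$ shows the set $\{v:T-tv\sim_s T'-tv\ \forall t>0\}$ is unchanged when $(T,T')$ is replaced by $(\Phi T,\Phi T')$. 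Hence $S(\Phi^{\pm1}T,\Phi^{\pm1}T')=S(T,T')$, and for every $n\in\Z$ the pair $(\Phi^nT,\Phi^nT')$ is again a branch pair whose asymptotic sector contains $S$. Fix a closed hemisphere $\bar H=\{v:\langle v,u\rangle\ge0\}$ with $\bar H\subset\mathrm{int}(S)$; because the equator $u^\perp\cap\mathbb{S}^{d-1}$ lies in $\mathrm{int}(S)\subseteq S(T,T')$, there is a $c>0$ with $S(T,T')\supseteq\{v:\langle v,u\rangle\ge -c\}$.

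Next I would localize the difference. Since $T-tv\sim_s T'-tv$ for all $t>0$ whenever $\langle v,u\rangle\ge -c$, the difference set $D:=\{x:T-x\nsim_s T'-x\}$ lies in the open cone about $-u$ of half-angle $<\pi/2$ cut out by $c$, and meets the hyperplane $u^\perp$ only at $0$ (here Lemma \ref{stable mfd} guarantees $\nsim_s$ behaves well under the translations involved, and Lemma \ref{existence of R} upgrades this to genuine agreement of $T$ and $T'$ on a shifted half-space $\{\langle x,u\rangle>R\}$). The point I want to extract is that $0$ is distinguished: every nonzero point of $u^\perp$ is a point of stable equivalence while $0$ is not, and $D$ is a cone with apex $0$, so $D(\Phi^{-n}T,\Phi^{-n}T')=\lambda^{-n}D$ retains its apex at $0$ for all $n$.

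For periodicity I would choose $n_i\to\infty$ with $\Phi^{-n_i}T\to\hat T$ and $\Phi^{-n_i}T'\to\hat T'$. By Lemma \ref{finitely many patches}, applied to the branch pairs $(\Phi^{-n_i}T,\Phi^{-n_i}T')$, for each fixed $r$ there are translations $x_i\to0$ with $B_r[\Phi^{-n_i}T]=B_r[\hat T-x_i]$ and $B_r[\Phi^{-n_i}T']=B_r[\hat T'-x_i]$. The crux is to show $x_i=0$ for large $i$. Writing $x_i=a_iu+w_i$: the component $a_i$ along $u$ must vanish because a nonzero $a_i$ would displace the agreement half-space $\{\langle x,u\rangle>R\}$, a feature already visible inside $B_r$; and the transverse component $w_i$ must vanish by Lemma \ref{no slip}, since otherwise $(\hat T,\hat T')$ and its $w_i$-translate would both be stably related on overlapping patches straddling the equator, where the interior hemisphere forces stable equivalence on both sides of $0$, which is precisely the slipping that lemma forbids. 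With $x_i=0$ we obtain $B_r[\Phi^{-n_i}T]=B_r[\hat T]$ for all large $i$; as $r$ is arbitrary this gives $B_r[\Phi^{-n_i}T]=B_r[\Phi^{-n_j}T]$ for infinitely many indices, so Lemma \ref{periodic tilings} makes $T$ $\Phi$-periodic, and Corollary \ref{periodic branch points} makes $T'$ $\Phi$-periodic too.

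Finally, for finiteness I would use that $\Phi$ maps the set of branch pairs with $S\subseteq S(\cdot,\cdot)$ bijectively to itself. Once each member is $\Phi$-periodic, it remains to bound the period and the number of germs at $0$: for fixed $r$ there are only finitely many patch-pairs $(B_r[A],B_r[B])$ coming from branch pairs (Lemma \ref{finitely many patches}), so with the apex pinned at $0$ the map $\Phi$ permutes a finite set of such germs and the period is bounded, while a tiling fixed by a bounded power of $\Phi$ is determined by its (finitely many possible) germ at $0$. I expect the main obstacle to be the pinning $x_i=0$ of the preceding paragraph: it is exactly the control of the transverse slip $w_i$ where the hypothesis that $S$ contains a closed hemisphere \emph{in its interior}, rather than merely a half-space, is indispensable, and where Lemma \ref{no slip} must be invoked with care.
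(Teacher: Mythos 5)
Your overall strategy is essentially the paper's: use the closed-hemisphere hypothesis to confine the failure set $D=\{x:T-x\nsim_s T'-x\}$ to $\{0\}$ union a region disjoint from its own negative, match central patches of the iterates $\Phi^{-n}(T)$ up to translations $x_i\to 0$ via Lemma \ref{finitely many patches}, pin $x_i=0$, and finish with Lemma \ref{periodic tilings} and the fact that a $\Phi$-periodic tiling is determined by its central patch. You also correctly flag the pinning $x_i=0$ as the crux. The gap is that neither half of your argument for that crux is valid. The transverse half misapplies Lemma \ref{no slip}: that lemma concerns a \emph{relative} slip between the two patches of a pair --- it compares $(P,P')$ with $(P-x,P')$ --- whereas in your situation both members of each pair are translated by the \emph{same} vector $x_i$. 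Translating both patches of a pair together never destroys stable relatedness (it is just a change of base point), so the lemma says nothing about $w_i$. The longitudinal half (``a nonzero $a_i$ would displace the agreement half-space, a feature already visible inside $B_r$'') is not a proof: the set on which two tilings agree has no detectable boundary --- $T$ and $T'$ may agree on a region strictly larger than $\{\langle x,u\rangle>R\}$ --- so equality of $r$-patches up to translation places no constraint on $a_i$ by itself. Indeed no componentwise argument can work: a translation $x_i$ pointing purely in the $-u$ direction passes your transverse test vacuously and is consistent with everything your longitudinal test actually uses; it is only excluded by a genuinely two-sided argument.

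The correct argument (the paper's) uses the distinguished failure point symmetrically. Since stable equivalence at interior points is determined by the patch pair (a consequence of Lemma \ref{stable mfd} and expansivity of $\Phi$), the identification $(B_r[\Phi^{-n_i}T],B_r[\Phi^{-n_i}T'])=(B_r[\hat T-x_i],B_r[\hat T'-x_i])$ transports failure points both ways: the failure of $(\Phi^{-n_i}T,\Phi^{-n_i}T')$ at $0$ forces $x_i$ to be a failure point of $(\hat T,\hat T')$, and the failure of $(\hat T,\hat T')$ at $0$ forces $-x_i$ to be a failure point of $(\Phi^{-n_i}T,\Phi^{-n_i}T')$. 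Both failure sets lie in $\{0\}\cup\{x:\langle x,u\rangle<0\}$ --- this is exactly what the closed hemisphere in the interior of $S$ buys --- and since that set meets its negative only in $\{0\}$, we get $x_i=0$. Note that for this you also need the failure set of the \emph{limit} pair $(\hat T,\hat T')$ to be so confined, which requires Lemma \ref{branch pairs closed} and which your write-up never addresses; the paper sidesteps limits entirely at this stage by comparing, via the pigeonhole principle, the finitely many translation classes of pairs $(B_1[\Phi^k(T_n)],B_1[\Phi^k(T'_n)])$, all of whose sectors contain $S$ by hypothesis and $\Phi$-invariance, concluding that any translation carrying one such pair to another is zero; that single statement then yields both periodicity (Lemma \ref{periodic tilings}) and finiteness at once.
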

\begin{proof}
Let $u\in\mathbb{S}^{d-1}$ be such that $\{v\in\mathbb{S}^{d-1}:\langle v,u\rangle\ge 0\}\subset S$, and let $\bar{H}=\{x\in\R^d:\langle x,u\rangle\ge 0\}$. Suppose that $(T_n,T'_n)$ are branch pairs with $S\subset S(T_n,T'_n)$
for $n\in \N$. By Lemma \ref{finitely many patches}, there are, up to translation, only finitely many pairs $(B_1[\Phi^k(T_n)],B_1[\Phi^k(T'_n)])$ for $n\in\N$ and $k\in\Z$. Suppose that $n_1,n_2,k_1,k_2$ and $x$ are such that $(B_1[\Phi^{k_1}(T_{n_1})],B_1[\Phi^{k_1}(T'_{n_1})])=(B_1[\Phi^{k_2}(T_{n_2})]-x,B_1[\Phi^{k_2}(T'_{n_2})]-x)$. The fact that $B_1[\Phi^{k_1}(T_{n_1})]$ is stably related to $B_1[\Phi^{k_1}(T'_{n_1})]$ on $int(spt(B_1[\Phi^{k_1}(T_{n_1})]))\cap int(spt(B_1[\Phi^{k_1}(T'_{n_1})]))\cap(\bar{H}\setminus \{0\})$ while $B_1[\Phi^{k_2}(T_{n_2})]-x$ is not stably related to $B_1[\Phi^{k_2}(T'_{n_2})]-x$ on $\{x\}$ means that
$x\notin \bar{H}\setminus\{0\}$. The preceding statement remains valid if $n_1$ and $n_2$ are switched, $k_1$ and $k_2$ are switched,
 and $x$ is replaced by $-x$. We have that $x\notin \bar{H}\setminus\{0\}$ and $-x\notin \bar{H}\setminus\{0\}$; hence $x=0$.

Now fix $n$. There must be $m>0$ and $l_i\to-\infty$ so that $(B_1[\Phi^{l_i}(T_n)], B_1[\Phi^{l_i}(T'_n)])$ and $(B_1[\Phi^{l_i+m}(T_n)], B_1[\Phi^{l_i+m}(T'_n)])$ are the same up to translation for each $i$. From the above,  $(B_1[\Phi^{l_i}(T_n)], B_1[\Phi^{l_i}(T'_n)])=(B_1[\Phi^{l_i+m}(T_n)], B_1[\Phi^{l_i+m}(T'_n)])$ for each $i$. By Lemma \ref{periodic tilings}, $T_n$ and $T'_n$ are periodic.

Since we have only finitely many distinct pairs $(B_1[T_n],B_1[T'_n])$ and since $B_1[T_n]=B_1[T_m]$ and $T_n,T_m$ periodic implies that $T_n=T_m$,
we have only finitely many branch pairs with asymptotic sector containing $S$, all of which are periodic.
\end{proof}

\begin{proposition}\label{branch points} If $\Omega=\Omega_{\Phi}$ is a $d$-dimensional, non-periodic, FLC, self-similar substitution tiling space and $T\ne T'\in\Omega$ agree on a set $X=\{v\in\mathbb{S}^{d-1}:\langle v,u\rangle\,>\alpha\}$ for some $u\in\mathbb{S}^{d-1}$ and $\alpha\le 0$, then there is a branch pair $(\bar{T},\bar{T}')\in\mathcal{BP}(\Phi)$
with $X\cap\mathbb{S}^{d-1}\subset S(\bar{T},\bar{T'})$. Moreover, if $\alpha<0$, we may take $\bar{T}=T-z$ and $\bar{T}'=T-z$ for some $z\in\R^d$.
\end{proposition}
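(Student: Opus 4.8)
The plan is to study the set $A:=\{x\in\R^d:\ T-x\sim_s T'-x\}$ and to locate a branch point where $A$ fails. First I would record three properties of $A$. It is open: this is exactly the remark following Lemma \ref{stable mfd}, that $\nsim_s$ is closed under $x_n\to x$. It contains the cone $X$ (I read $X$ as the cone $\{x\in\R^d:\langle x,u\rangle>\alpha|x|\}$, whose trace on the sphere is $X\cap\mathbb{S}^{d-1}=\{v:\langle v,u\rangle>\alpha\}$): if $x\in X$ then $T$ and $T'$ share all tiles in a neighborhood of $x$, so $B_0[\Phi^n(T-x)]=B_0[\Phi^n(T'-x)]$ for large $n$, whence $x\in A$ by Lemma \ref{stable mfd}. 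Finally $A\ne\R^d$, for otherwise $T=T'$ by Lemma \ref{T=T'}.

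The engine of the argument is the observation that \emph{any} pair $(S,S')$ whose agreement set contains $X$ and has $0$ in its complement is already a branch pair with $\{v:\langle v,u\rangle>\alpha\}\subset S(S,S')$. Indeed the half-space $H_0=\{\langle x,u\rangle>0\}\subset X$ witnesses $(S,S')\in\mathcal{BP}(\Phi)$, and for $v$ with $\langle v,u\rangle>\alpha$ one has $tv\in X$ for every $t>0$ (since $tv\in X\iff\langle v,u\rangle>\alpha$, independently of $t$), so $v$ lies in $\{v:S-tv\sim_s S'-tv\ \forall t>0\}$; as this set contains $H_0\cap\mathbb{S}^{d-1}$ and $\{v:\langle v,u\rangle>\alpha\}$ is connected, the whole cap lies in the component $S(S,S')$. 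Thus everything reduces to producing such a pair. If $0\in A^c$ we are done immediately with $(S,S')=(T,T')$. If instead $0\in A$, I would deflate: cone-agreement is scale invariant, so $\Phi^{-n}T$ and $\Phi^{-n}T'$ again agree on $X$, and their agreement set is $\lambda^{-n}A$, whose complement $\lambda^{-n}A^c$ accumulates at $0$. Choosing $p\in A^c$, passing to a convergent subsequence $\Phi^{-n_i}T\to\bar T$, $\Phi^{-n_i}T'\to\bar T'$, and using $\lambda^{-n_i}p\to0$ together with the closedness of $\nsim_s$, I get $\bar T\nsim_s\bar T'$ while $\bar T,\bar T'$ still agree on $X$; the observation then finishes the general statement.

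For the \emph{moreover} (when $\alpha<0$) I want the pair to be an honest translate $(T-z,T'-z)$; the case $0\in A^c$ already gives this with $z=0$, so assume $0\in A$. By the observation I must find $z\in A^c$ with $z+X\subset A$. Writing $C=X^c$ for the (now strict, since $\alpha<0$) downward cone containing $A^c$, this is equivalent to $A^c\subset z+C$, i.e. to exhibiting an apex of $A^c$ in the order determined by $C$. \textbf{This is the step I expect to be the main obstacle.} The difficulty is genuine: for an arbitrary closed $A^c\subset C$ no such apex need exist (for example $C\setminus B_1(0)$ has none, and then no translate works), so the argument must use more than the three formal properties of $A$. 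I would attack it by combining $\alpha<0$ (which makes $C$ strictly pointed, so the relevant down-sets are compact) with the self-similarity $A^c(\Phi^nT,\Phi^nT')=\lambda^nA^c$ and the finite local complexity of $\Omega$ (via Lemma \ref{no slip} and Lemma \ref{existence of R}) to rule out such ``holes at the tip'' of $A^c$ and extract a genuine apex $z\in A^c$. Once $z$ is found, $z+X\subset A$ makes $z+H_0$ the half-space witnessing the branch pair, and the cap $\{v:\langle v,u\rangle>\alpha\}$ lands in its asymptotic sector exactly as in the observation.
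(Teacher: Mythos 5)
Your reduction of the statement to the ``observation'' is correct, and your skeleton for the first claim (deflate by $\Phi^{-n}$, pass to a subsequential limit $(\bar T,\bar T')$, transfer the failure of $\sim_s$ at the rescaled point $\lambda^{-n_i}p$ to the limit) is exactly the paper's. The gap is in the transfer step. The ``closedness of $\nsim_s$'' you cite (the remark after Lemma \ref{stable mfd}) holds for a \emph{fixed} pair $(T,T')$ under a varying translation $x_n\to x$; you apply it to a sequence of \emph{different} pairs $(S_i,S_i')\to(\bar T,\bar T')$ in $\Omega\times\Omega$, and in that generality it is false. Indeed, since $\sim_s$ is determined by the pair of $0$-patches (Lemma \ref{stable mfd}), Lemma \ref{no slip} produces, for any tiling $T_0$ and any small $x\ne 0$, a point $y$ in a fixed tile with $T_0-x-y\nsim_s T_0-y$; letting $x\to 0$ and passing to a subsequence in $y$ gives pairs in $\nsim_s$ converging to a \emph{diagonal} pair, so the closure of $\nsim_s$ even contains pairs of equal tilings. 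The danger in your situation is precisely this: $S_i$ may converge to $\bar T$ and $S_i'$ to $\bar T'$ along slightly \emph{different} small translations, and then nothing is inherited by $(\bar T,\bar T')$. What rescues the argument --- and is the technical core of the paper's proof --- is rigidity: by Lemma \ref{existence of R}, each pair $(\Phi^{-n}T,\Phi^{-n}T')$ agrees at a point deep in the cone (a ball of radius $R$ about such a point lies in $X$), so the patches $B_r[\Phi^{-n}T]$ and $B_r[\Phi^{-n}T']$ share a tile; FLC then gives only finitely many pairs $(B_r[\Phi^{-n}T],B_r[\Phi^{-n}T'])$ up to translation, so along the convergent subsequence $(B_r[T_{n_i}],B_r[T'_{n_i}])=(B_r[\bar T-x_i],B_r[\bar T'-x_i])$ with a \emph{common} $x_i\to 0$, and only then does the fixed-pair closedness apply. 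Note that your companion assertion that $(\bar T,\bar T')$ is ``still stably related on $X$'' is not a limit-closed condition for varying pairs either, and needs the same mechanism.

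The ``moreover'' clause you leave open by your own account, and your diagnosis is right that the three formal properties of $A$ cannot yield the apex $z$; but the missing input is not a finer analysis of $A^c$ --- it is Proposition \ref{periodic branch pairs exist}. Since $\alpha<0$, the cap $\{v:\langle v,u\rangle>\alpha\}$ contains a closed hemisphere in its interior, so \emph{every} subsequential limit of $(\Phi^{-n}T,\Phi^{-n}T')$ is (by the first part) a branch pair whose sector contains that cap; Proposition \ref{periodic branch pairs exist} says there are only finitely many such pairs and they are all $\Phi$-periodic. A finite limit set of this kind must be a single periodic orbit $\{(\Phi^k\bar T,\Phi^k\bar T'):k=1,\ldots,m\}$. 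Comparing patches along the sequence then gives $(B_1[T_n],B_1[T'_n])=(B_1[\Phi^{-n+l}(\bar T)+x_n],B_1[\Phi^{-n+l}(\bar T')+x_n])$ with $x_n=\lambda x_{n+1}$ for all large $n$, and a nested-union argument upgrades this patch equality to the exact equality $T_N=\Phi^{-N+l}(\bar T)+x_N$ (similarly for $T'_N$), whence $(T-z,T'-z)=(\Phi^l(\bar T),\Phi^l(\bar T'))$ with $z=\lambda^N x_N$. This dynamical route is what produces the point $z$ you are seeking; without it (or something equivalent), both your main transfer step and the moreover statement remain unproved.
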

\begin{proof}Let $\lambda$ be the expansion factor for $\Phi$. For each $n\in\mathbb{N}$, let $T_n:=\Phi^{-n}(T)$ and $T'_n:=\Phi^{-n}(T')$. Then $T_n-x\sim_s T'_n-x$ for all $x\in X$ and all $n\in\mathbb{N}$. It follows from Lemma \ref{existence of R}
that there is an $R>0$ so that $T_n$ and $T'_n$ agree on $\{Ru\}$ for all n. Just as in the proof of Lemma \ref{finitely many patches} there are, up to translation, only finitely many pairs of patches  $(B_r[T_n],B_r[T'_n])$ for any particular $r$.
Choose a subsequence $\{n_i\}$ so that $T_{n_i}\to \bar{T}\in\Omega$ and $T'_{n_i}\to \bar{T}'\in\Omega$ as $i\to\infty$. By Lemma \ref{T=T'}, there is a $y$ so that $T-y\nsim_s T'-y$. Then $T_{n_i}-y_{n_i}\nsim_s T'_{n_i}-y_{n_i}$ for all $i$ where $y_k:=\lambda^{-k}y$, since $T_{n_i}-y_{n_i}\to \bar{T}$ and $T'_{n_i}-y_{n_i}\to \bar{T}'$, $\bar{T}\nsim_s\bar{T}'$.
Suppose that $v$ is such that $\bar{T}-v\nsim_s\bar{T}'-v$. Let $r=|v|+1$. For large enough $i$ there are
$x_{n_i}$ and $x'_{n_i}$, with $|x_{n_i}|<1$, $|x'_{n_i}|<1$, $x_{n_i}\to 0$ and $x'_{n_i}\to 0$, so that $B_r[T_{n_i}]\subset B_{r+1}[\bar{T}]-x_{n_i}$ and
$B_r[T'_{n_i}]\subset B_{r+1}[\bar{T}']-x'_{n_i}$. Since there are only finitely many pairs $(B_r[T_{n_i}],B_r[T'_{n_i}])$, up to translation, there are arbitrarily large $i$ for which $x_{n_i}=x'_{n_i}$. For such $i$, $v-x_{n_i}\in int(spt(B_r[T_{n_i}]))\cap int(spt(B_r[T'_{n_i}]))$ so that $T_{n_i}-(v-x_{n_i})\sim_s T'_{n_i}-(v-x_{n_i})$ if and only if $\bar{T}-v\sim_s\bar{T}'-v$. Since $\bar{T}-v\nsim_s\bar{T}'-v$, $T_{n_i}-(v-x_{n_i})\nsim_s T'_{n_i}-(v-x_{N_i})$, and
hence $\langle v-x_{n_i},u\rangle\le \alpha$. Since $x_{n_i}\to 0$, $\langle v,u\rangle\le \alpha$. In other words, $\bar{T}-x\sim_s\bar{T}'-x$ for all $x\in X$.

Suppose now that $\alpha<0$. If $j_i\to\infty$ is any sequence so that $T_{j_i}\to\bar{T}_j$ and $T_{j_i}\to\bar{T}'_j$, the forgoing argument shows that $(\bar{T}_j,\bar{T}'_j)$ is a branch pair with $X\cap\mathbb{S}^{d-1}\subset S(\bar{T}_j,\bar{T}'_j)$. By Proposition \ref{periodic branch pairs exist}, there are only finitely many such branch pairs and they are all periodic. It must be the case that all these branch pairs are on the same periodic orbit under $\Phi\times\Phi$ (in general, if an $\alpha$-limit set is finite it must be a single periodic orbit). Say this periodic orbit is $\{(\Phi^k(\bar{T}),\Phi^k(\bar{T}')):k=1,\ldots,m\}$. There are then an $l$ and $x_n\to0$ so that $(B_1[T_n],B_1[T'_n])=(B_1[\Phi^{-n+l}(\bar{T})+x_n],B_1[\Phi^{-n+l}(\bar{T}')+x_n])$ for large $n$. From this we see that $x_n=\lambda x_{n+1}$ for sufficiently large $n$, say, for $n\ge N$.
We have $T_N\supset \Phi^{k}(B_1[T_{N+k }])=\Phi^{k}(B_1[\Phi^{-N-k+l}(\bar{T})+x_{N+k}])\supset B_{\lambda^k}[\Phi^{-N+l}(\bar{T})+x_N]$ for all $k\in\N$. That is, $T_N=\Phi^{-N+l}(\bar{T})+x_N$. Similarly, $T'_N=\Phi^{-N+l}(\bar{T'})+x_n$. Thus $(T-z,T'-z)=(\Phi^l(\bar{T}),\Phi^l(\bar{T}'))$ for $z=\lambda^Nx_N$.
\end{proof}

 \begin{lemma}\label{branch points with direction} Suppose that $\Omega=\Omega_{\Phi}$ is a $d$-dimensional, non-periodic, FLC, self-similar substitution tiling space and suppose that there are $T,T'\in\Omega_\Phi$, $r>0$, and $u\in\mathbb{S}^{d-1}$ so that $T-x\sim_sT'-x$ for all $x\in B_r(0)$ and $T-ru\nsim_sT'-ru$. Then there
is a pair $(\bar{T},\bar{T}')\in\mathcal{BP}(\Phi)$ with $\{v\in\mathbb{S}^{d-1}:\langle v,-u\rangle\,>0\}\subset S(\bar{T},\bar{T}')$.
\end{lemma}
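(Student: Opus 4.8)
The plan is to pin down the branch pair by recentering at the point where stability fails and then using the inflation to open up a half space of stability. First I would set $\hat T:=T-ru$ and $\hat T':=T'-ru$, so that $\hat T\nsim_s\hat T'$ while $\hat T-z\sim_s\hat T'-z$ for all $z\in B_r(-ru)$. Since $\Phi$ preserves both $\sim_s$ and $\nsim_s$ and $\Phi^n(\hat T-w)=\Phi^n(\hat T)-\lambda^n w$, the tilings $\tilde T_n:=\Phi^n(\hat T)$ and $\tilde T'_n:=\Phi^n(\hat T')$ satisfy $\tilde T_n\nsim_s\tilde T'_n$ and $\tilde T_n-z\sim_s\tilde T'_n-z$ for every $z$ in the open ball $D_n:=B_{\lambda^n r}(-\lambda^n r u)$. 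Each $D_n$ is tangent to the origin from the $-u$ side, and these balls increase to the open half space $H:=\{z:\langle z,u\rangle<0\}$. Using compactness of $\Omega_\Phi$ I would pass to $n_i\to\infty$ with $\tilde T_{n_i}\to\bar T$ and $\tilde T'_{n_i}\to\bar T'$, and claim $(\bar T,\bar T')$ is the desired branch pair.

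The transfer of stability relations to the limit rests on a locality property of $\sim_s$. The tiles of $\Phi^N(S)$ containing $\lambda^N z$ all lie inside $\Phi^N(\tau)$ for tiles $\tau$ of $S$ containing $z$, so $B_0[\Phi^N(S-z)]=B_0[\Phi^N(S)-\lambda^N z]$ is determined by $B_0[S-z]$; with Lemma \ref{stable mfd} this shows that whether $S-z\sim_s S''-z$ depends only on the pointed patches $B_0[S-z]$ and $B_0[S''-z]$. I would combine this with the finite local complexity fact used in the proof of Lemma \ref{finitely many patches}: for each radius $\rho$ there is, for all large $i$, a single vector $w_i\to 0$ with $B_\rho[\tilde T_{n_i}]=B_\rho[\bar T-w_i]$ and $B_\rho[\tilde T'_{n_i}]=B_\rho[\bar T'-w_i]$, so that $B_0[\tilde T_{n_i}-z']=B_0[\bar T-(z'+w_i)]$ for $|z'|<\rho$ (and likewise for the primed tilings).

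With these tools the two claims follow. For $\bar T\nsim_s\bar T'$, taking $\rho=0$ and locality turns $\tilde T_{n_i}\nsim_s\tilde T'_{n_i}$ into $\bar T-w_i\nsim_s\bar T'-w_i$, and the closedness of $\nsim_s$ under the limit $w_i\to 0$ (the property noted after Lemma \ref{stable mfd}) gives $\bar T\nsim_s\bar T'$. For $\bar T-z\sim_s\bar T'-z$ on all of $H$, the key is to transfer the entire open region $D_{n_i}$ at once rather than point by point: for $z'\in D_{n_i}\cap B_\rho(0)$ we have $\tilde T_{n_i}-z'\sim_s\tilde T'_{n_i}-z'$, and locality converts this to $\bar T-(z'+w_i)\sim_s\bar T'-(z'+w_i)$, so $\bar T$ and $\bar T'$ are stably equivalent throughout the open set $(D_{n_i}\cap B_\rho(0))+w_i$. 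Given $z\in H$ with $|z|<\rho$, for large $i$ the point $z-w_i$ lies in $D_{n_i}\cap B_\rho(0)$, whence $\bar T-z\sim_s\bar T'-z$; letting $\rho\to\infty$ covers $H$. Thus $(\bar T,\bar T')\in\mathcal{BP}(\Phi)$ with $H$ as its open half space of stability, and since the connected hemisphere $H\cap\mathbb{S}^{d-1}=\{v:\langle v,-u\rangle>0\}$ lies in $\{v:\bar T-tv\sim_s\bar T'-tv\ \forall t>0\}$, it sits inside $S(\bar T,\bar T')$, which is the conclusion.

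I expect the main obstacle to be exactly the second claim at tile-boundary points. A naive pointwise passage to the limit fails because $\sim_s$ is only open, not closed, in the translation parameter — branch points are precisely places where nearby translates are stably equivalent but the point itself is not — so having $\sim_s$ at a dense set of points near $z$ would not yield it at $z$, and a generic ray in a direction $v$ must cross tile boundaries. What rescues the argument is that inflation supplies stability on a full open ball $D_n$, and the finite-local-complexity matching identifies pointed patches exactly (not merely approximately), so translating the whole open region $D_{n_i}$ by $w_i$ produces genuine stability of $(\bar T,\bar T')$ on an open set, with boundary points causing no difficulty.
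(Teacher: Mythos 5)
Your proof is correct, and it reaches the conclusion by a route that is genuinely more self-contained than the paper's. Both arguments share the same skeleton: recenter at $ru$, inflate by $\Phi^n$ so that the region of stable equivalence $D_n=B_{\lambda^n r}(-\lambda^n r u)$ grows to the half-space $H=\{z:\langle z,u\rangle<0\}$, and pass to subsequential limits. They diverge in how the limit is handled. The paper never tries to transfer the relation $\sim_s$ itself to the limit: it first applies Lemma \ref{existence of R} to upgrade stable equivalence on the deep interior of the balls $D_n$ to genuine patch agreement $B_0[T_n-y]=B_0[T'_n-y]$, which is a closed condition that survives the limit trivially, and it then feeds the resulting pair of distinct tilings agreeing on a half-space into Proposition \ref{branch points}, which runs a second deflate-and-limit argument to manufacture the branch pair. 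You instead transfer $\sim_s$ and $\nsim_s$ directly, using the locality of $\sim_s$ (dependence only on pointed $0$-patches, via Lemma \ref{stable mfd}), exact patch matching with a common translation vector $w_i\to 0$, and the closedness of $\nsim_s$ noted after Lemma \ref{stable mfd}. As a result your limit pair $(\bar T,\bar T')$ is itself the branch pair, with no appeal to Lemma \ref{existence of R} or Proposition \ref{branch points}; in effect you re-run the mechanism of the paper's Lemma \ref{branch pairs closed} in this setting. What the paper's route buys is that the delicate limit bookkeeping is done once and for all inside Proposition \ref{branch points}, with only the robust notion of agreement passing through the limit; what your route buys is a one-pass proof whose output is visibly the limit of the inflated pairs, and a correct diagnosis of why the naive pointwise limit of $\sim_s$ fails (it is open, not closed, in the translation parameter).

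One step you should make explicit: the common-vector matching you invoke is the second statement of Lemma \ref{finitely many patches}, which is stated only for pairs in $\mathcal{BP}(\Phi)$, and your pairs $(\tilde T_{n_i},\tilde T'_{n_i})$ are not yet known to be branch pairs. The mechanism does apply, but it needs the input of Lemma \ref{finitely many patch pairs}, namely that the pairs $(B_\rho[\tilde T_{n_i}],B_\rho[\tilde T'_{n_i}])$ fall into only finitely many translation classes. This holds here because every pair $(\tilde T_{n_i},\tilde T'_{n_i})$ is stably related on the fixed ball $D_0=B_r(-ru)\subset D_{n_i}$, so (after translating by $ru$ and taking $\rho>2r$) the hypotheses of Lemma \ref{finitely many patch pairs} are satisfied uniformly in $i$. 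With that observation inserted, the argument is complete.
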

\begin{proof} For $n\in\N$, let $T_n=\Phi^n(T-ru)$ and $T'_n=\Phi^n(T'-ru)$. By Lemma \ref{existence of R}, there is an $R$ so that $B_0[T_n-y]=B_0[T'_n-y]$ for all $y\in B_{\lambda^nr-R}(\lambda^nru)$ for each $n$. Let $\{n_i\}$ be a subsequence so that $T_{n_i}\to\tilde{T}$ and $T'_{n_i}\to\tilde{T'}$. Since $B_{R+1}[T_n]$ and $B_{R+1}[T'_n]$ share a tile, finite local complexity insures that there are only finitely many pairs of patches  $(B_{R+1}[T_n],B_{R+1}[T'_n])$, up to translation. It follows that from $T_{n_i}\nsim_sT'_{n_i}$, that $\tilde{T}\ne\tilde{T}'$. Now $\tilde{T}+(R+1)u\ne\tilde{T}'+(R+1)u$ and $B_0[\tilde{T}+(R+1)u-y]=B_0[\tilde{T}'+(R+1)u-y]$ for all $y\in\R^d$ with $\langle y,-u\rangle\,>0$. The desired $(\bar{T},\bar{T}')$ exists by Proposition \ref{branch points}.
\end{proof}

Let $\Phi$ be a substitution with branch pairs $\mathcal{BP}=\mathcal{BP}(\Phi)$ and periodic branch pairs $\mathcal{PBP}=\mathcal{PBP}(\Phi)$. Given $(T,T')\in\mathcal{BP}$, let $\tilde{\partial} S(T,T'):=\{v\in\partial S(T,T'):\exists t_i\to\infty$ such that $T-t_iv\nsim_sT'-t_iv\}$. The {\em branch locus} of $\Omega_{\Phi}$ is the set
$$
\mathcal{BL}(\Phi):=\cup_{(T,T')\in\mathcal{BP}}cl(\cup_{t\ge0,v\in \tilde{\partial} S(T,T')}\{T-tv\})
$$
and the {\em periodic branch locus} is the set
$$
\mathcal{PBL}(\Phi):=\cup_{(T,T')\in\mathcal{PBP}}cl(\cup_{t\ge0,v\in \tilde{\partial} S(T,T')}\{T-tv\}).
$$
It follows from Propositions \ref{agree in half-space} and \ref{branch points} that $\mathcal{BL}(\Phi)\ne\emptyset$ for a non-periodic, FLC, self-similar substitution tiling space $\Omega_{\Phi}$.

\begin{lemma}\label{branch pairs closed} If $\Omega=\Omega_{\Phi}$ is a $d$-dimensional, non-periodic, FLC, self-similar substitution tiling space, then $\mathcal{BP}(\Phi)$ is closed in $\Omega\times\Omega$.
If $(T_n,T'_n)\in\mathcal{BP}(\Phi)$ and $u_n\in\mathbb{S}^{d-1}$, $n\in\N$, are such that $\{v\in\mathbb{S}^{d-1}:\langle v,u_n\rangle\,>0\}\subset S(T_n,T'_n)$; $u$ is an accumulation point of $\{u_n:n\in\N\}$; and
$(T_n,T'_n)\to (T,T')$; then $\{v\in\mathbb{S}^{d-1}:\langle v,u\rangle\,>0\}\subset S(T,T')$.
\end{lemma}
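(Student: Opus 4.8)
The plan is to prove the second (displayed) assertion first and then read off the closedness of $\mathcal{BP}(\Phi)$ as a consequence. By the definition of branch pair, each $(T_n,T'_n)\in\mathcal{BP}(\Phi)$ comes equipped with an open half-space, equivalently a unit vector $u_n$, for which $\{v\in\mathbb{S}^{d-1}:\langle v,u_n\rangle>0\}\subset S(T_n,T'_n)$; so for the closedness statement I will use compactness of $\mathbb{S}^{d-1}$ to extract an accumulation point $u$ of $\{u_n\}$ and invoke the second assertion. Passing to a subsequence realizing this accumulation point, I may therefore assume $u_n\to u$ and $(T_n,T'_n)\to(T,T')$, and the goal is to show $(T,T')\in\mathcal{BP}(\Phi)$ with $\{v:\langle v,u\rangle>0\}\subset S(T,T')$.

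The engine of the argument is the patch-matching from Lemma \ref{finitely many patches} together with the locality of stable relatedness. For each $r$ there are $x_n\to0$ with $(B_r[T_n],B_r[T'_n])=(B_r[T-x_n],B_r[T'-x_n])$ for large $n$. The locality principle I will use is that $B_0[\Phi^m(S-p)]$ depends only on the germ of $S$ at $p$: because $\Phi^m$ inflates by $\Lambda^m$, the tiles of $\Phi^m(S)$ at the origin come from the tile(s) of $S$ at $\Lambda^{-m}(0)=0$, so agreement of $S$ with another tiling on a neighborhood of $p$ forces $B_0[\Phi^m(S-p)]$ to agree for every $m$. Combined with Lemma \ref{stable mfd}, this means that if two tilings agree with two others on a neighborhood of a point $p$, then the first pair is stably related at $p$ if and only if the second is. This is exactly the mechanism already used in the proof of Proposition \ref{branch points}.

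First I show $T-x\sim_s T'-x$ for every $x$ with $\langle x,u\rangle>0$. Fix such an $x$ and set $r=|x|+1$. Since $\langle x-x_n,u_n\rangle\to\langle x,u\rangle>0$, for large $n$ the direction $(x-x_n)/|x-x_n|$ lies in $\{v:\langle v,u_n\rangle>0\}\subset S(T_n,T'_n)$, whence $T_n-(x-x_n)\sim_s T'_n-(x-x_n)$. On the other hand, from $(B_r[T_n],B_r[T'_n])=(B_r[T-x_n],B_r[T'-x_n])$ the tilings $T_n-(x-x_n)$ and $T-x$ agree on a ball about the origin of radius $r-|x-x_n|\to1$, and likewise $T'_n-(x-x_n)$ and $T'-x$ agree near the origin. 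By locality the stable relation transfers, giving $T-x\sim_s T'-x$. Thus the open hemisphere $\{v:\langle v,u\rangle>0\}$ is contained in $\{w\in\mathbb{S}^{d-1}:T-tw\sim_s T'-tw\ \forall t>0\}$.

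It remains to see that $(T,T')$ is a genuine branch pair, i.e. that $T\nsim_s T'$. Applying the same patch-matching and locality at the origin (the case $x=0$), the relation $T_n\nsim_s T'_n$ (valid since each $(T_n,T'_n)$ is a branch pair) transfers to $T-x_n\nsim_s T'-x_n$; since $x_n\to0$, the closure property of $\nsim_s$ recorded after Lemma \ref{stable mfd} yields $T\nsim_s T'$. Hence $(T,T')\in\mathcal{BP}(\Phi)$, and because the open hemisphere $\{v:\langle v,u\rangle>0\}$ is connected and lies in the set of stably related directions, it lies in the single connected component $S(T,T')$; this proves the second assertion and, via the extraction of $u$ above, the closedness of $\mathcal{BP}(\Phi)$. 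The main obstacle is the locality transfer of $\sim_s$ and $\nsim_s$ across the convergence $(T_n,T'_n)\to(T,T')$: the subtlety is that one must match patches so that $T-x$ itself, rather than a perturbation of it, agrees with $T_n-(x-x_n)$ on a neighborhood of fixed size, which is why the shift $x-x_n$ rather than $x$ is used; this sidesteps having to know that $\{y:T-y\sim_s T'-y\}$ is closed, which it need not be (it is only open).
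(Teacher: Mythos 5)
Your proposal is correct and follows essentially the same route as the paper's proof: both extract $x_n\to 0$ with $(B_r[T_n],B_r[T'_n])=(B_r[T-x_n],B_r[T'-x_n])$ via Lemma \ref{finitely many patches}, use the locality of $\sim_s$ coming from Lemma \ref{stable mfd} to transfer $T_n-(z-x_n)\sim_s T'_n-(z-x_n)$ to $T-z\sim_s T'-z$ for $\langle z,u\rangle>0$, and rule out $T\sim_s T'$ by the openness of $\sim_s$ (your ``closure of $\nsim_s$'' is the same fact the paper invokes, phrased contrapositively). The only cosmetic difference is that you argue with tiling-level agreement of $0$-patches where the paper uses its patch-level ``stably related'' terminology.
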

\begin{proof} Suppose $(T_n,T'_n)\in\mathcal{BP}(\Phi)$ with $(T_n,T'_n)\to (T,T')\in\Omega\times\Omega$. Let $u_n\in\mathbb{S}^{d-1}$ be such that $\{v\in\mathbb{S}^{d-1}:\langle v,u_n\rangle\,>0\}\subset S(T_n,T'_n)$. By passing to a subsequence, we may assume that $u_n\to u$ as $n\to\infty$. As in the proof of Proposition \ref{branch points}, there are $x_n\to 0$ so that $(B_1[T_n],B_1[T'_n])=(B_1[T-x_n],B_1[T'-x_n])$ for all sufficiently large $n$. If $B_1[T]$ and $B_1[T']$ are stably related on $\{0\}$ then they are stably related on $B_{\epsilon}(0)$ for some $\epsilon>0$ (Lemma \ref{stable mfd}). But then $B_1[T_n]$ would be stably related to $B_1[T'_n]$ for $n$ large enough so that $|x_n|<\epsilon$, contrdicting $T_n\nsim_s T'_n$. Thus $T\nsim_s T'$.

Let $z\in\R^n$ be such that $\langle z,u\rangle\,>0$ and let $r=|z|+1$. For sufficiently large $n$, $(B_r[T_n],B_r[T'_n])=(B_r[T-x_n],B_r[T'-x_n])$. Then $B_r[T]$ is stably related to $B_r[T']$ on $\{z\}$ if and only if $B_r[T_n]$
is stably related to $B_r[T'_n]$ on $\{z-x_n\}$ (for large $n$). Since $u_n\to u$, $x_n\to 0$, and $\langle z,u\rangle\,>0$, $\langle z-x_n,u_n\rangle\,>0$ for sufficiently large $n$. For such $n$, $(z-x_n)/|z-x_n|\in S(T_n,T'_n)$ so
$T_n-(z-x_n)\sim_sT'-(z-x_n)$; hence  $B_r[T_n]$ is stably related to $B_r[T'_n]$ on $\{z-x_n\}$, $B_r[T]$ is stably related to $B_r[T']$ on $\{z\}$, and $T-z\sim_sT'-z$. So $(T,T')\in\mathcal{BP}(\Phi)$ with
$\{v\in\mathbb{S}^{d-1}:\langle v,u\rangle\,>0\}\subset S(T,T')$.
\end{proof}

\begin{theorem}\label{topological invariance 1} Suppose that $\Omega_{\Phi}$ and $\Omega_{\Psi}$ are homeomorphic $d$-dimensional, non-periodic, FLC, self-similar substitution tiling spaces and suppose that there are distinct tilings $T_0,T_0'\in\Omega_{\Phi}$ for which there is $S\subset \mathbb{S}^{d-1}$ that contains a closed hemisphere in its interior such that $T_0$ and $T_0'$ are uniformly asymptotic in directions $S$.
Then there is a homeomorphism from $\Omega_{\Phi}$ onto $\Omega_{\Psi}$ that takes $\mathcal{BL}(\Phi)$ onto $\mathcal{BL}(\Psi)$ and $\mathcal{PBL}(\Phi)$ onto $\mathcal{PBL}(\Psi)$.
\end{theorem}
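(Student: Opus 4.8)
The plan is to recast the definition of the branch locus entirely in the language of the translation flow, and then to argue that a homeomorphism $h\colon\Omega_\Phi\to\Omega_\Psi$ preserves this flow-asymptotic structure up to a linear change of the direction sphere. First I would translate from the $\sim_s$-language to the flow-language. By Lemma \ref{stable mfd} and Lemma \ref{existence of R}, the condition $T-x\sim_s T'-x$ for all $x$ in an open half-space $H$ is equivalent to $T$ and $T'$ agreeing on a slightly retracted open half-space, and hence, by the remark preceding Lemma \ref{asymptotic direction} together with Lemmas \ref{asymptotic direction} and \ref{asymptotic directions}, to $T$ and $T'$ being uniformly asymptotic under translation in an open hemisphere of directions. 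Thus a branch pair is exactly a pair of distinct tilings on distinct arc-components that are uniformly asymptotic in an open hemisphere of directions; the asymptotic sector $S(T,T')$ and its boundary $\tilde{\partial} S(T,T')$ acquire intrinsic flow descriptions, and the definitions of $\mathcal{BL}(\Phi)$ and $\mathcal{PBL}(\Phi)$ become intrinsic to the $\R^d$-action. Using the hypothesis, Lemma \ref{asymptotic directions} and Proposition \ref{branch points} (the case $\alpha<0$) produce a branch pair whose asymptotic sector contains a closed hemisphere in its interior, and Proposition \ref{periodic branch pairs exist} then guarantees that there are only finitely many such pairs and that all of them are periodic; this anchors the invariant on a nonempty finite set.

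Next I would transport this structure by $h$. Since arc-components are precisely the path components of a tiling space, $h$ carries arc-components to arc-components, that is, $\R^d$-orbits to $\R^d$-orbits. The heart of the argument is to show that $h$ respects flow-asymptoticity, and for this I would establish that $h$ may be taken to be a topological orbit equivalence whose orbit reparametrizations have bounded distortion from a fixed invertible linear map $L\colon\R^d\to\R^d$; that is, along each orbit $h(T-v)=h(T)-\alpha_T(v)$ with $|\alpha_T(v)-Lv|$ bounded uniformly in $T$ and $v$. Granting this, if $T$ and $T'$ agree on a half-space then $d(h(T)-w,\,h(T')-w)\to0$ as $w\to\infty$ through the image of that half-space; because the half-space carries a $\delta$-cone of slack coming from the robust hypothesis, the bounded distortion is absorbed and $h(T),h(T')$ agree on an honest half-space, so $(h(T),h(T'))$ is again a branch pair. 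Asymptotically the reparametrization sends $tv$ in the direction $Lv/|Lv|$, so the induced map on sets of asymptotic directions is the projective-linear self-homeomorphism $w\mapsto Lw/|Lw|$ of $\mathbb{S}^{d-1}$.

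The map $w\mapsto Lw/|Lw|$ carries open hemispheres bijectively to open hemispheres (since $\langle v,u\rangle>0\iff\langle Lv,(L^{-1})^{\mathrm{T}}u\rangle>0$), and therefore carries any set containing a closed hemisphere in its interior to another such set. Consequently $h$ maps branch pairs to branch pairs, asymptotic sectors to asymptotic sectors, the boundary directions $\tilde{\partial} S(T,T')$ to the corresponding boundary directions, and large-sector branch pairs to large-sector branch pairs. Since $h$ also intertwines the flows up to the bounded reparametrization, the operations of flowing along boundary directions for $t\ge0$ and of taking closures are preserved, and one reads off $h(\mathcal{BL}(\Phi))=\mathcal{BL}(\Psi)$. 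For the periodic branch locus I would use that, by Proposition \ref{periodic branch pairs exist} and Corollary \ref{periodic branch points}, every large-sector branch pair is periodic and there are only finitely many of them, while $\Phi$-periodicity of a branch point is detected by recurrence of its patches under inflation (Lemma \ref{periodic tilings}), a feature that survives the linear identification of the inflation structures; hence $h$ matches the periodic branch pairs of $\Phi$ with those of $\Psi$ and $h(\mathcal{PBL}(\Phi))=\mathcal{PBL}(\Psi)$.

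The main obstacle is the rigidity step: showing that $h$ can be realized as a bounded-distortion orbit equivalence with a well-defined invertible linear part $L$. This is exactly where the hypothesis that the asymptotic directions contain a closed hemisphere in their interior is essential, since it provides the angular slack $\delta$ that absorbs the bounded distortion of $h$ and thereby guarantees that the robust branch pairs survive under $h$ as genuine branch pairs; without this slack a branch pair asymptotic in exactly an open hemisphere could be distorted to a pair asymptotic in slightly less than a hemisphere and fail to remain a branch pair. I expect the bounded, asymptotically linear character of $h$ on orbits to follow from the local product structure together with self-similarity, the scalar inflations forcing the orbit reparametrizations to be asymptotically linear; verifying this carefully, and checking that the resulting $L$ is independent of the orbit, is the technical core of the proof.
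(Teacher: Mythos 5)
The central claim of your first paragraph --- that ``a branch pair is exactly a pair of distinct tilings on distinct arc-components that are uniformly asymptotic in an open hemisphere of directions,'' so that $\mathcal{BL}(\Phi)$ becomes intrinsic to the $\R^d$-action --- is false, and the whole argument founders on it. Your proposed characterization is translation-invariant: if $T,T'$ are uniformly asymptotic in an open hemisphere of directions, so are $T-w,T'-w$ for every $w\in\R^d$. But being a branch pair is not translation-invariant: if $(T,T')$ is a branch pair with half-space $H$, then for any $w\in H$ we have $T-w\sim_s T'-w$, so $(T-w,T'-w)$ fails the defining condition $T\nsim_s T'$ while still satisfying your flow condition. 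Lemmas \ref{stable mfd} and \ref{existence of R} do identify ``stably related on a half-space'' with ``agreeing on a retracted half-space,'' but they give no flow-intrinsic meaning to the failure $T\nsim_s T'$, which is what pins down \emph{where} asymptoticity starts; that location is determined by $\Phi$ through the stable relation, and this is precisely the role of the substitution emphasized in the introduction. Consequently, even granting your bounded-distortion linear orbit equivalence --- indeed, even granting the exact linear homeomorphism $h_0(T-x)=h_0(T)-Lx$ furnished by Theorem 1.1 of \cite{Kwapisz}, which is stronger than what you propose to prove --- you may only conclude that $h$ matches the translation-invariant asymptotic structure: $h(\mathcal{BL}(\Phi))$ consists of tilings on the correct orbits, asymptotic in the correct directions, but nothing forces these tilings to sit at the points where $\Psi$-branching starts. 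The step ``one reads off $h(\mathcal{BL}(\Phi))=\mathcal{BL}(\Psi)$'' is therefore a genuine gap, not a formality.

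What the paper does instead is to upgrade the homeomorphism until it respects the substitutions themselves. Starting from the linear $h_0$ of \cite{Kwapisz}, the hemisphere hypothesis together with Lemma \ref{asymptotic directions} and Propositions \ref{periodic branch pairs exist} and \ref{branch points} produces a translate of $(T_0,T_0')$ that is $\Phi$-periodic; linearity of $h_0$ sends this pair to a pair uniformly asymptotic in directions $S'=\{Lv/|Lv|:v\in S\}$, which again contains a closed hemisphere in its interior (this, not ``absorbing distortion,'' is where the robustness of $S$ is used), so the same propositions applied in $\Omega_{\Psi}$ yield a translate that is $\Psi$-periodic. Composing with these translations gives a homeomorphism taking a $\Phi$-periodic point to a $\Psi$-periodic point, and Corollary 1.5 of \cite{Kwapisz} then replaces it by an $h$ with $h\circ\Phi^m=\Psi^n\circ h$. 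Only such a conjugacy of powers of the substitutions preserves stable manifolds and substitution-periodic points, and only then do $\mathcal{BL}$ and $\mathcal{PBL}$ correspond under $h$. Your proposal both omits this pinning-down-and-conjugating step (the periodic branch pairs appear in your write-up only as ``anchoring the invariant on a nonempty finite set'') and leaves the rigidity it does require --- the asymptotically linear structure of $h$ along orbits --- as an unproved ``technical core,'' when it is exactly the cited theorem of Kwapisz.
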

\begin{proof} By Theorem 1.1 of \cite{Kwapisz}, there is a homeomorphism $h_0:\Omega_{\Phi}\to\Omega_{\Psi}$ and a linear isomorphism $L$ of $\mathbb{R}^d$ with $h_0(T-x)=h_0(T)-Lx$ for all $T\in\Omega_{\Phi}$ and all $x\in\mathbb{R}^d$. Let $u\in\mathbb{S}^{d-1}$ and $\alpha<0$ be such that
$\{v\in\mathbb{S}^{d-1}:\langle v,u\rangle\,>\alpha\}\subset S$. By Lemma \ref{asymptotic directions} and Propositions \ref{periodic branch pairs exist} and \ref{branch points}, there is $x_0$ so that $T_0-x_0$ and $T'_0-x_0$ are $\Phi$-periodic. Since $T_0-x_0$ and $T'_0-x_0$ are uniformly asymptotic in directions
$S$, it follows (from the ``linear" nature of $h_0$)  that $h_0(T_0-x_0)$ and $h_0(T'_0-x_0)$ are uniformly asymptotic in directions
$S':=\{Lv/|Lv|:v\in S\}$. The set of directions $S'$ must also contain a closed hemisphere in its interior, so, again by Lemma \ref{asymptotic directions} and Propositions \ref{periodic branch pairs exist} and \ref{branch points}, there is $y_0$ so that $h_0(T_0-x_0)-y_0$ and $h_0(T'_0-x_0)-y_0$ are $\Psi$-periodic. Let $h_1:\Omega_{\Phi}\to\Omega_{\Psi}$ be defined by $h_1(T):=h(T-x_0)-y_0$.
Since the homeomorphism $h_1$ is now ``pinned down" in that it takes a particular $\Phi$-periodic point to a $\Psi$-periodic point,
Corollary 1.5 of \cite{Kwapisz} guarantees that there is a homeomorphism $h:\Omega_{\Phi}\to\Omega_{\Psi}$ together with positive integers $m,n$ so that $h\circ\Phi^m=\Psi^n\circ h$. Such an $h$ necessarily takes $\Phi$-periodic points to $\Psi$-periodic points and
$\Phi$-stable manifolds to $\Psi$-stable manifolds and it follows that $h(\mathcal{BL}(\Phi))=\mathcal{BL}(\Psi)$ and $h(\mathcal{PBL}(\Phi))=\mathcal{PBL}(\Psi)$
\end{proof}

The branch locus may be rather difficult to describe in general. But in dimension two (as in dimension one) we will see that there are only finitely many sectors $S(T,T')$ for a given $\Phi$, which considerably simplifies the situation. We turn to that case now.

\section{Two-dimensional tilings}\label{2-d}

All 2-dimensional substitution tiling spaces in this section will be assumed to be non-periodic, primitive, self-similar, and have finite translational local complexity. For the sake of convenience, we will also assume all tiles have convex polygonal support.

\begin{lemma}\label{all sectors open} If $\Omega=\Omega_{\Phi}$ is a  $2$-dimensional, non-periodic, FLC, self-similar substitution tiling space and $(T,T')\in\mathcal{BP}(\Phi)$ then $S(T,T')$ is open.
\end{lemma}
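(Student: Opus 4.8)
The plan is to work on the circle $\mathbb{S}^1$ and reduce openness of the arc $S(T,T')$ to its (at most two) endpoints. First I would record the structure of the non-stable set $D:=\{x\in\R^2: T-x\nsim_s T'-x\}$. By the remark following Lemma \ref{stable mfd}, $D$ is closed; since $(T,T')$ is a branch pair, $0\in D$ and $D$ is disjoint from an open half-plane $H$, and $S(T,T')$ is precisely the connected component containing $H\cap\mathbb{S}^1$ of $A:=\{v\in\mathbb{S}^1: \{tv:t>0\}\cap D=\emptyset\}$. Because the substitution is self-similar, $D$ is equivariant, $D_{(\Phi^k T,\Phi^k T')}=\lambda^k D$, so $A$ and hence the sector are $\Phi$-invariant: $S(\Phi^k T,\Phi^k T')=S(T,T')$ for all $k\in\Z$. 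Interior points of the arc $S$ are automatically interior to $A$, so it suffices to show each endpoint $v_0$ of $S$ fails to lie in $A$ (or lies in a different component); that is, to rule out $v_0\in S$.

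Assume $v_0\in S$, so $\{tv_0:t>0\}\cap D=\emptyset$, while, $v_0$ being an endpoint, there are $w_n\to v_0$ with $w_n\notin S$; in the relevant case $w_n\notin A$, so $t_nw_n\in D$ for some $t_n>0$. If $t_n$ stays bounded away from $0$ and $\infty$, a subsequential limit gives $t_*v_0\in D$ with $t_*>0$, contradicting $v_0\in A$. The cases $t_n\to\infty$ and $t_n\to0$ are interchanged by $\Phi$-equivariance, so I would rescale: choose $m_n$ with $\lambda^{-m_n}t_n\in[1,\lambda]$, set $(T_n,T'_n):=(\Phi^{-m_n}T,\Phi^{-m_n}T')$ and $z_n:=\lambda^{-m_n}t_nw_n\in D_{(T_n,T'_n)}$. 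Then $|z_n|\in[1,\lambda]$ and $z_n\to sv_0$ for some $s\in[1,\lambda]$. All $(T_n,T'_n)$ are branch pairs with the common sector $S$ and common half-space $H$; by compactness and Lemma \ref{branch pairs closed} a subsequence converges to a branch pair $(\hat T,\hat T')$ whose sector still contains $H\cap\mathbb{S}^1$, and the (joint) closedness of $\nsim_s$ coming from Lemma \ref{stable mfd} yields $sv_0\in D_{(\hat T,\hat T')}$.

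Next I would split on the angular width of the arc $S$. If the width exceeds $\pi$, then $S$ contains a closed hemisphere in its interior, so by Proposition \ref{periodic branch pairs exist} the pair $(T,T')$ is periodic, and Lemma \ref{sector open} already gives that $S$ is open. Hence I may assume $S$ has width exactly $\pi$. Since $S$ contains the open semicircle $H\cap\mathbb{S}^1$ and has the same width, its only possible endpoints are the two directions $\pm u^\perp$ perpendicular to the axis $u$ of $H$; in particular the endpoint $v_0$ in question satisfies $\langle v_0,u\rangle=0$, and the clean ray $\{tv_0:t>0\}$ runs along the bounding line $\partial H$.

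The heart of the matter, and what I expect to be the main obstacle, is excluding this perpendicular boundary-line endpoint, and this is exactly where the convex polygonal hypothesis enters. Passing to a power $\Phi^N$ (Lemma \ref{stable mfd}), stable agreement of $\hat T,\hat T'$ across $H$ becomes literal coincidence of the two tilings on every compact piece of $\{\langle x,u\rangle\ge c\}$, $c>0$; since the tiles are convex polygons and, by FLC, there are only finitely many edge orientations, the frontier of the coincidence region is a locally finite union of straight edges. I would argue that two tilings agreeing on the full open half-plane $H$ yet disagreeing arbitrarily close to $\partial H$ must have this frontier running along $\partial H$ itself, so that $u^\perp$ is blocked; Lemma \ref{no slip} rules out vanishingly thin disagreement features and Lemma \ref{finitely many patches} controls their accumulation. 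The offending configuration would be a disagreement fault asymptotically parallel to, but offset from, the clean ray; the self-similar rescaling above sends such an offset into a fault lying exactly on $\partial H$ in the limit pair $(\hat T,\hat T')$, and the edge-direction rigidity then contradicts $sv_0\in D_{(\hat T,\hat T')}$ coexisting with a clean $u^\perp$-direction. Establishing this geometric dichotomy, namely fault along $\partial H$ versus an impossible offset asymptotic fault, is the crux; the circle combinatorics and the compactness reductions above are comparatively routine.
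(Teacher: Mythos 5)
Your outer reductions are sound and in fact run parallel to the paper's own proof: the width-greater-than-$\pi$ case is handled exactly as in the paper via Proposition \ref{periodic branch pairs exist} and Lemma \ref{sector open}, and in the width-exactly-$\pi$ case your rescaling by $\Phi^{-m_n}$ and passage to a limit branch pair $(\hat T,\hat T')$ with $H\cap\mathbb{S}^1\subset S(\hat T,\hat T')$ and $sv_0\in D_{(\hat T,\hat T')}$ is legitimate (though the ``joint'' closedness of $\nsim_s$ you invoke is not literally the remark after Lemma \ref{stable mfd}, which concerns a fixed pair; it follows from Lemma \ref{finitely many patches} as in the proof of Lemma \ref{branch pairs closed}). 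The genuine gap is the step you yourself flag as the crux. At that stage you have a blocked point $sv_0$ for the \emph{limit} pair and a clean ray for the \emph{original} pair; these are different pairs, so there is no contradiction yet, and everything hinges on transferring information between $(\hat T,\hat T')$ and the $(T_n,T'_n)$. Your proposed bridge fails on its own terms: Lemma \ref{existence of R} upgrades stable agreement on $H$ only to literal agreement on $\{x:\langle x,u\rangle\ge R\}$ for one fixed $R$, and applying powers of $\Phi$ rescales the pair rather than the constant, so no single pair coincides on every $\{x:\langle x,u\rangle\ge c\}$, $c>0$; moreover the set $D$ is defined by the asymptotic relation $\nsim_s$, not by literal tile disagreement, so its frontier is not a union of tile edges and no ``edge-direction rigidity'' is available. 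The convex-polygon hypothesis is a red herring here --- the paper's proof of this lemma never uses it.

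What actually closes the argument is FLC patch rigidity, and your setup is one step away from it. By Lemma \ref{finitely many patches} there are offsets $y_n\to0$ with $(B_r[T_n],B_r[T'_n])=(B_r[\hat T-y_n],B_r[\hat T'-y_n])$ for large $n$, and by Lemma \ref{stable mfd} the relation $\sim_s$ at a point near $0$ is determined by these patches, so it transports between the pairs up to the shift $y_n$. Testing at $0$ and at $-y_n$ (both blocked, since both pairs are branch pairs, while both pairs are clean on $H$) forces $\langle y_n,u\rangle=0$, i.e.\ $y_n=c_nv_0\in\mathbf{l}$ with $c_n\to0$; then the blocked point $sv_0$ of $(\hat T,\hat T')$ transports to the blocked point $(s-c_n)v_0$ of $(T_n,T'_n)$, which for large $n$ lies on the ray $\{tv_0:t>0\}$ that your contradiction hypothesis makes clean --- contradiction. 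This is precisely the mechanism of the paper's proof, which (without the contradiction framing) shows the offsets must lie on $\mathbf{l}$ and then either vanish infinitely often, so that $(T,T')$ is $\Phi$-periodic by Lemma \ref{periodic tilings} and $S(T,T')$ is open by Lemma \ref{sector open}, or else three distinct collinear offsets produce blocked points in both directions $\pm v_0$, excluding both endpoints at once. Without this (or an equivalent) transfer step, your proposal does not prove the lemma.
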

\begin{proof} Given $(T,T')\in\mathcal{BP}(\Phi)$, if $S(T,T')$ contains a closed half-space in its interior, then $(T,T')$ is periodic and $S(T,T')$ is open by Lemma \ref{sector open}. So we may suppose that $\partial S(T,T')=\{v,-v\}$ for some $v\in\mathbb{S}^1$. Let $u\in\mathbb{S}^1$ be such that $\{v\in\mathbb{S}^1:\langle v,u\rangle\,>0\}\subset S(T,T')$. For $n\in\Z$, let $(T_n,T'_n):=(\Phi^{-n}(T),\Phi^{-n}(T'))$. Choose a subsequence $n_i$ so that $(T_{n_i},T'_{n_i})\to (\bar{T},\bar{T}')$ as $i\to\infty$. Then $(\bar{T},\bar{T}')\in\mathcal{BP}(\Phi)$, $\{v\in\mathbb{S}^1:\langle v,u\rangle\,>0\}\subset S(\bar{T},\bar{T}')$, and there are $x_i\to0$ so that $(B_1[T_{n_i}],B_1[T'_{n_i}])=(B_1[\bar{T}-x_i],B_1[\bar{T}'-x_i])$ for sufficiently large $i$ (Lemmas \ref{branch pairs closed} and \ref{finitely many patches}).

Let $\mathbf{l}$ be the line $\mathbf{l}:=\{tv:t\in\R\}$ and let $D^+$ be the component of $B_1(0)\setminus \mathbf{l}$ that contains $1/2u$.
Then $B_1[\bar{T}]$ is stably related to $B_1[\bar{T}']$ on $D^+$ so it must be the case that $x_i\notin D^+$. Now, if $x_i\notin\mathbf{l}$, then
$\langle -x_i,u\rangle\,>0$ so $T_{n_i}\sim_sT'_{n_i}$. But then $\bar{T}\sim_s\bar{T}'$, which is not the case. Thus
$x_i\in\mathbf{l}$ for all large $i$. If $x_i=0$ for infinitely many $i$, then $(T,T')$ is $\Phi$-periodic by Lemma \ref{periodic tilings} and $S(T,T')$ is open by Lemma \ref{sector open}. Otherwise, there are $i_1,i_2,i_3$ with $x_{i_2}$ between $x_{1_i}$ and $x_{i_3}$ on $\mathbf{l}$, say $x_{i_1}=x_{i_2}-t_1v$ and $x_{i_3}=x_{i_2}+t_3v$ with $t_1,t_3>0$. Then $T-\lambda^{n_{i_2}}t_1v\nsim_sT'-\lambda^{n_{i_2}}t_1v$ and  $T-\lambda^{n_{i_2}}t_3(-v)\nsim_sT'-\lambda^{n_{i_2}}t_3(-v)$, so $v\notin S(T,T')$,
$-v\notin S(T,T')$, and $S(T,T')$ is open.
\end{proof}

$\mathbf{N.B.}$ The above proof in fact shows that if $\Phi$ is 2-dimensional, $(T,T')\in \mathcal{BP}(\Phi)$ and $v\in\partial S(T,T')$, then there are $t_n\to\infty$ or $t_n\to-\infty$  so that $T-t_nv\nsim_s T'-t_nv$.

\begin{theorem}\label{finitely many sectors} If $\Omega=\Omega(\Phi)$ is a 2-dimensional, non-periodic, FLC, self-similar substitution tiling space, then the collection $\{S(T,T'):(T,T')\in\mathcal{BP}(\Phi)\}$ is finite.
\end{theorem}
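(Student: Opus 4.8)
The plan is to reduce everything to the finiteness of the set of boundary directions
\[
\mathcal D:=\bigcup_{(T,T')\in\mathcal{BP}(\Phi)}\partial S(T,T')\subset\mathbb{S}^1 .
\]
By Lemma \ref{all sectors open} every $S(T,T')$ is an open arc, and since a branch pair carries an open half-space of stable agreement, each such arc contains an open half-circle and so has length at least $\pi$; hence $\mathbb{S}^1\setminus S(T,T')$ is a closed arc of length at most $\pi$ and $\partial S(T,T')$ has at most two points. A sector other than the full circle is then completely determined by its (at most two) boundary points together with the choice of complementary arc, so finiteness of $\mathcal D$ forces finiteness of all such sectors; the full circle $\mathbb{S}^1$ contains a closed hemisphere in its interior and so, by Proposition \ref{periodic branch pairs exist}, arises from only finitely many branch pairs and contributes a single sector. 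Thus it suffices to prove $\mathcal D$ finite.

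I would next exploit that the scalar expansion $\lambda$ preserves all directions, so $\Phi$ preserves sectors: $S(\Phi^{\pm1}T,\Phi^{\pm1}T')=S(T,T')$. Fixing $(T,T')$ with $v\in\partial S(T,T')$, every pullback $\Phi^{-n}(T,T')$ therefore has the same boundary direction $v$, and by the remark following Lemma \ref{all sectors open} there are faults $T-t_nv\nsim_sT'-t_nv$ with $t_n\to+\infty$ (or $-\infty$). By Lemma \ref{finitely many patches} the classes $(B_1[\Phi^{-n}T],B_1[\Phi^{-n}T'])$ recur up to translation, and at a repetition the matching translation $x$ is constrained exactly as in Proposition \ref{periodic branch pairs exist}: the stable relation of the two patches on the agreement side, together with Lemma \ref{no slip}, forbids $x$ from having a component into the agreement cone, while the symmetric statement forbids a component on the opposite side.

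Let $u\in\mathbb{S}^1$ be the normal with $\{w:\langle w,u\rangle>0\}\subset S(T,T')$ near $v$. If $S(T,T')$ has length strictly greater than $\pi$ then the agreement cone contains a closed half-plane, the constraint above forces $x=0$, and Lemma \ref{periodic tilings} makes $(T,T')$ $\Phi$-periodic. If instead $S(T,T')$ is exactly a half-circle, the agreement cone is only an open half-plane bounded by $\ell:=\mathbb{R}v$, the constraint yields only $\langle x,u\rangle=0$, i.e. $x\in\ell$, and the pair is $\Phi$-periodic up to a translation along the fault line $\ell$. In both cases I would use the convex polygonal hypothesis of Section \ref{2-d}: by Lemma \ref{asymptotic directions} the branch pair actually shares tiles on the cone over any slightly smaller sub-sector beyond a fixed radius, so the set on which $T$ and $T'$ agree is a union of common tiles whose frontier is an edge-path built from the finitely many edge directions of the prototiles. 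Because agreement holds just inside the sector while the faults $t_nv$ sit on its boundary, this frontier tracks the ray $\mathbb{R}^+v$ (or $\mathbb{R}^-v$) to infinity within a bounded distance, and the recurrence identifies the frontier with its own image under ``scale by $\lambda$, then translate by $x$ along $\ell$,'' so it is self-similar with asymptotic direction $v$.

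The heart of the proof — and the step I expect to be the main obstacle — is the exact half-circle case, the genuine two-dimensional analogue of the one-dimensional asymptotic pair and precisely the configuration not covered by Proposition \ref{periodic branch pairs exist}, since an open half-circle contains no closed hemisphere in its interior. Here the matching translation need not vanish but only slides along $\ell$, so periodicity fails and $v$ must instead be extracted from the self-similar structure of the frontier. The point to nail down is that a self-similar, bounded-from-$\ell$ edge-path in finitely many edge directions is determined by a single fundamental annulus $\{r\le|x|\le\lambda r\}$ of its frontier; that there are only finitely many such annuli up to the equivalence coming from Lemma \ref{finitely many patches}; and that each one pins $v$ as the direction in which it repeats under inflation. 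Granting this, only finitely many fault directions occur, so $\mathcal D$ is finite and, with the reduction of the first paragraph, the theorem follows. The convex polygonal hypothesis is exactly what makes the frontier a well-behaved edge-path and lets this finiteness argument run.
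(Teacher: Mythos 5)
Your reduction to finiteness of the set of boundary directions, and your treatment of sectors strictly longer than a half-circle (which is just Proposition \ref{periodic branch pairs exist}), are both sound. But the proof stops exactly where the theorem lives: the exact half-circle (line pair) case is introduced with ``Granting this,'' and the granted statement is never proved. Each of its ingredients is problematic. The recurrence you extract from Lemma \ref{finitely many patches} identifies $(B_1[\Phi^{-n}T],B_1[\Phi^{-n}T'])$ with a translate of $(B_1[\Phi^{-n-k}T],B_1[\Phi^{-n-k}T'])$; this only shows that a \emph{subsequential limit} of the rescaled pair is self-similar (this is how Proposition \ref{PBL=BL} manufactures periodic pairs), not that the frontier of the original agreement region is invariant under ``scale by $\lambda$, translate along $\ell$.'' Moreover, a bounded annulus of an edge-path lying within distance $C$ of $\ell$ determines the direction $v$ only up to an error of order $C/r$; to pin $v$ exactly you need the gluing translation along $\ell$, and finiteness of that data over \emph{all} branch pairs is essentially the statement being proved, not a consequence of Lemma \ref{finitely many patches} (you would also need the bound $C$ to be uniform over all branch pairs, which is nowhere established). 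It is telling that the paper obtains this kind of structural control of line-pair frontiers only later, in Theorem \ref{structure of branch locus}, and only under the \emph{additional} hypotheses that $\Phi$ is Pisot or $\mathcal{BP}(\Phi)$ is non-collapsing, whereas Theorem \ref{finitely many sectors} assumes neither.

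The paper's proof avoids analyzing any individual frontier and handles precisely your hard case by renormalization and compactness. Assuming infinitely many distinct sectors with $\partial S(T_n,T'_n)=\{v_n,w_n\}$, one chooses $s_n>0$ with $T_n-s_nv_n\nsim_s T'_n-s_nv_n$ (possible since sectors are open, Lemma \ref{all sectors open}), applies powers of $\Phi$ to force $s_n\in[\lambda^{-2},\lambda^{-1})$ (self-similarity preserves sectors, as you note), and passes to a limit $(T,T')\in\mathcal{BP}(\Phi)$ with $s_n\to s$, $v_n\to v$, using Lemma \ref{branch pairs closed} to get both $T\nsim_s T'$ and $T-sv\nsim_s T'-sv$. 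Lemma \ref{finitely many patches} then gives $x_n\to 0$ with $(B_1[T_n],B_1[T'_n])=(B_1[T-x_n],B_1[T'-x_n])$, and the key geometric observation is that if $v_n\ne v$, the boundary lines $\mathbf{l}_n=\{tv_n+x_n\}$ and $\mathbf{l}=\{tv\}$ cross in one point, which forces one of the two fault points $0$ or $sv$ of the limit pair into the open agreement half-plane $D_n^+$ of the $n$-th pair; the patch identification then yields $T\sim_s T'$ or $T-sv\sim_s T'-sv$, a contradiction. Hence $v_n=v$ for large $n$, and repeating the argument for the $w_n$ finishes the proof. If you replace your granted claim with this renormalize-and-take-limits argument, your outline closes up; as written, the central finiteness assertion is a conjecture, not a proof.
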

\begin{proof}
Suppose instead that there are $(T_n,T'_n)\in\mathcal{BP}(\Phi)$, $n\in\N$, so that $\partial S(T_n,T'_n)=\{v_n,w_n\}$ and $\{v_i,w_i\}\ne\{v_j,w_j\}$ for all $i\ne j$. Passing to a subsequence, we may assume that $v_n\to v$ and $w_n\to w$ as $n\to\infty$. Since $v_n\notin S(T_n,T'_n)$ (Lemma \ref{all sectors open})
there are $s_n\in\R^+$ so that $T_n-s_nv_n\nsim_s T'_n-s_nv_n$. Let $k_n\in\Z$ be so that $\lambda^{k_n}s_n\in [\lambda^{-2},\lambda^{-1})$. Replace $(T_n,T'_n)$ by $(\Phi^{k_n}(T_n),\Phi^{k_n}(T'_n))$
and $s_n$ by $\lambda^{k_n}s_n$. We still have $\partial S(T_n,T'_n)=\{v_n,w_n\}$ since $\Phi$ is self-similar. Let $u_n\in \mathbb{S}^1$ be such that $\langle u_n,v_n\rangle=0$ and $\{v\in\mathbb{S}^1:\langle v,u\rangle\,>0\}\subset S(T_n,T'_n)$. By passing to a subsequence, we may assume that $(T_n,T'_n)\to (T,T')\in\Omega\times\Omega$, $s_n\to s\in [\lambda^{-2},\lambda^{-1}]$, and $u_n\to u\in\mathbb{S}^1$. From Lemma \ref{branch pairs closed} we have $(T,T')\in\mathcal{BP}(\Phi)$, $\{v\in\mathcal{S}^1:\langle v,u\rangle\,>0\}\subset
S(T,T')$, and $T-sv\nsim_s T'-sv$ (the latter because $(T_n-s_nv_n,T'_n-s_nv_n)\in\mathcal{BP}(\Phi)$ and $(T_n-s_nv_n,T'_n-s_nv_n)\to (T-sv,T'-sv)$). By Lemma \ref{finitely many patches} there is an $N$ and there are $x_n\to 0$ so that $(B_1[T_n],B_1[T'_n])=(B_1[T-x_n],B_1[T'-x_n])$ for all $n\ge N$. By taking $N$ large enough we can assure that $|x_n|<\epsilon$ and $|x_n+s_nv_n-sv|<\epsilon$ for all $n\ge N$, where $\epsilon>0$ is small enough so that the $\epsilon$ balls centered at $sv$ and $0$ are disjoint and contained in $B_1(0)$. Let $\mathbf{l}$ and $\mathbf{l}_n$ be the lines $\{tv:t\in\R\}$ and $\{tv_n+x_n:t\in\R\}$, resp.. Let $D^{\pm}$ and $D_n^{\pm}$ be the components of $B_1(0)\setminus \mathbf{l}$ and $B_1(0)\setminus \mathbf{l}_n$, resp., with $1/2u\in D^+$ and $1/2u_n+x_n\in D_n^+$.
Then $B_1[T]$ is stably related to $B_1[T']$ on $D^+$. Pick $n\ge N$. Since $(B_1[T_n],B_1[T'_n])=(B_1[T-x_n],B_1[T'-x_n])$ and $T_n\nsim_s T'_n$, $B_1[T]$ and $B_1[T']$ are not stably related on $\{x_n\}$; that is $x_n\notin D^+$. Similarly, $x_n+s_nv_n\notin D^+$. Suppose that $v_n\ne v$. Then the lines $\mathbf{l}_n$ and $\mathbf{l}$ intersect in a single point $\{p=tv\}$. If $t\ge s$, then $0\in\ D_n^+$; if $t\le 0$, then $sv\in D_n^+$; and if $0<t<s$ then exactly one of $0,sv$ is in $D_n^+$. So if $v_n\ne v$, $\{0,sv\}\cap D_n^+\ne\emptyset$. If $0\in D_n^+$ then $\langle u_n,-x_n\rangle\,>0$ so $T_n+x_n\sim_s T'+x_n$. But then $T\sim_s T'$, which is not the case. Similarly, if $sv\in D_n^+$ then $\langle u_n,sv-x_n\rangle\,>0$ so that $T_n-(sv-x_n)\sim_s T'-(sv-x_n)$. But then $T-sv\sim_s T'-sv$, which is also not the case. Thus it must be that $v_n=v$ for all $n\ge N$.

Now, for each $n\ge N$ there are $t_n>0$ so that $T_n-t_nw_n\nsim_sT'_n-t_nw_n$. Let $m_n\in\Z$ be such that $\lambda^{m_n}t_n\in [\lambda^{-2},\lambda^{-1})$: replace $(T_n,T'_n)$ by $(\Phi^{m_n}(T_n),\Phi^{m_n}(T'_n))$ and $t_n$ by $\lambda^{m_n}t_n$. Let $\bar{u}_n\in\mathbb{S}^1$ be such that $\{v\in\mathbb{S}^1:\langle v,\bar{u}_n\rangle\,>0\}\subset S(T_n,T'_n )$ for $n\ge N$. Passing to a subsequence, we may assume that $(T_n,T'_n)\to (\bar{T},\bar{T}')\in\Omega\times\Omega$, $t_n\to\bar{t}$, and $\bar{u}_n\to \bar{u}$ as $n\to\infty$. Note that for the new $T_n,T'_n$, it's still the case that $\partial S(T_n,T'_n)=\{v,w_n\}$ for $n\ge N$, with $w_i\ne w_j$ for $i\ne j$. Proceeding just as above, we may conclude that $w_n\equiv w$ for all sufficiently large $n$. That is, there could only have been finitely many asymptotic sectors to begin with.
\end{proof}

We will say that $\mathcal{BP}(\Phi)$ is {\em non-collapsing} if, whenever $(T,T')\in\mathcal{BP}(\Phi)$, $v\in\partial S(T,T')$, and $0\le t_0< t_1$ are such that $T-t_0v\nsim_sT'-t_0v$ and $T-t_1v\nsim_sT'-t_1v$, then $T-tv\nsim_sT'-tv$ for all $t\in[t_0,t_1]$. It is clear from Lemma \ref{branch pairs closed}
that the branch locus is non-collapsing if and only if the branch locus coincides with the collection of branch pairs.

If $(T,T')$ is a branch pair in the 2-dimensional substitution tiling space $\Omega$, we'll call $(T,T')$
\begin{itemize}
\item  an {\em isolated pair} if $S(T,T')=\mathbb{S}^1$;
\item a {\em corner pair} if $S(T,T')$ is proper in $\mathbb{S}^1$ but contains a closed semicircle in its interior;
\item a {\em line pair} if $S(T,T')$ is an open semicircle.
\end{itemize}
Every branch pair is of one of the above types. Furthermore, there are only finitely many isolated and corner pairs
and they are all periodic (Lemma \ref{all sectors open} and Theorem \ref{finitely many sectors}).

\begin{proposition}\label{PBL=BL} If $\mathcal{BP}$ is non-collapsing then $\mathcal{PBL}(\Phi)=\mathcal{BL}(\Phi)$.
\end{proposition}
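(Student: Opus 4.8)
The plan is to prove the two inclusions separately. Since every periodic branch pair is a branch pair, $\mathcal{PBP}(\Phi)\subseteq\mathcal{BP}(\Phi)$, so $\mathcal{PBL}(\Phi)\subseteq\mathcal{BL}(\Phi)$ is immediate from the definitions; all of the content is in the reverse inclusion. To establish $\mathcal{BL}(\Phi)\subseteq\mathcal{PBL}(\Phi)$, I would fix a branch pair $(T,T')$ and argue that $cl(\cup_{t\ge0,\,v\in\tilde{\partial} S(T,T')}\{T-tv\})\subseteq\mathcal{PBL}(\Phi)$. Because $\mathcal{BP}(\Phi)$ is closed (Lemma \ref{branch pairs closed}) and $\partial S(T,T')$ is finite (Theorem \ref{finitely many sectors}), every point of this closure is again a branch point of the form $\bar T-s\bar v$ with $\bar v\in\tilde{\partial} S(\bar T,\bar T')$; it therefore suffices to show that each individual such branch point lies in $\mathcal{PBL}(\Phi)$. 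If the pair in question is isolated or corner it is already periodic, and there are only finitely many such (Lemma \ref{all sectors open}, Theorem \ref{finitely many sectors}, Proposition \ref{periodic branch pairs exist}), so it lies in $\mathcal{PBL}(\Phi)$ trivially. The essential case is that of a line pair.

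The next step is to cash in the non-collapsing hypothesis. For a line pair $(T,T')$ with sector $\{w:\langle w,u\rangle>0\}$ and $v\in\tilde{\partial} S(T,T')$, consider $A:=\{t\ge0:T-tv\nsim_s T'-tv\}$. This set is closed (by the limit property of $\nsim_s$ recorded after Lemma \ref{stable mfd}), is order-convex, i.e. contains $[t_0,t_1]$ whenever $t_0<t_1$ lie in $A$ (this is exactly the non-collapsing hypothesis), contains $0$ (since $T\nsim_s T'$), and is unbounded above (since $v\in\tilde{\partial} S(T,T')$). Hence $A=[0,\infty)$, so $T-tv\nsim_s T'-tv$ for every $t\ge0$; and since $\langle v,u\rangle=0$, each $(T-tv,T'-tv)$ is again a branch pair whose sector contains $\{w:\langle w,u\rangle>0\}$ and has $v\in\tilde{\partial} S$. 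Thus, under non-collapsing, the whole boundary trajectory consists of branch points, and it remains to prove the following \emph{Claim}: every line-pair branch point $T$ lies in $cl(\cup_{s\ge0}\{P-sv\})$ for some periodic branch pair $(P,P')$ with $v\in\tilde{\partial} S(P,P')$.

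I would prove the Claim by rescaling with $\Phi$. Passing to $\Phi^{-n}(T,T')$---again a line pair with the same sector, by self-similarity---and applying Lemma \ref{finitely many patches} together with the finiteness of sectors, one locates returns, i.e. indices $n<m$ with $(B_1[\Phi^{-m}(T)],B_1[\Phi^{-m}(T')])=(B_1[\Phi^{-n}(T)]-x,\,B_1[\Phi^{-n}(T')]-x)$. The decisive point is that the registration translation $x$ must be parallel to $v$: exactly as in the proofs of Lemma \ref{all sectors open} and Theorem \ref{finitely many sectors}, stable-relatedness on the $u$-side of the branch line forces $\langle x,u\rangle\le0$, while the symmetric comparison (swapping $n$ and $m$, replacing $x$ by $-x$) forces $\langle -x,u\rangle\le0$, whence $\langle x,u\rangle=0$. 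These along-$v$ returns realize the branch line as the orbit of a one-dimensional self-similar substitution tiling; manufacturing a $\Phi$-periodic tiling $P$ from the recurrence as in Lemma \ref{periodic tilings} (correcting at each stage by the parallel-to-$v$ registration) yields a periodic branch point whose $v$-trajectory approaches $T$ arbitrarily closely, since the only discrepancy between $\Phi^n(P)$ and $T$ is a translation along $v$. This places $T$ in $cl(\cup_{s\ge0}\{P-sv\})\subseteq\mathcal{PBL}(\Phi)$.

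The main obstacle is the Claim itself: producing an honest $\Phi$-periodic branch point out of the along-$v$ recurrence and certifying that the approximation of $T$ stays inside a \emph{single} periodic pair's trajectory closure (rather than merely inside the union over all periodic pairs). This is the step in which one genuinely exploits that the registration translations are parallel to $v$, so that the dynamics induced on each branch line is that of a one-dimensional self-similar substitution, whose asymptotic orbits contain unique periodic tilings in the sense of \cite{BDH}. The remaining steps are bookkeeping with the closedness and finiteness statements already proved.
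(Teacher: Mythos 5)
Your overall route is the paper's: the trivial inclusion $\mathcal{PBL}(\Phi)\subseteq\mathcal{BL}(\Phi)$, the reduction (via non-collapsing and Lemma \ref{branch pairs closed}) to showing individual branch points lie in $\mathcal{PBL}(\Phi)$, the disposal of isolated and corner pairs by periodicity, and, for line pairs, rescaling by $\Phi^{-n}$, recurrence of patch pairs from Lemma \ref{finitely many patches}, and the argument forcing registration vectors parallel to $v$ exactly as in Proposition \ref{periodic branch pairs exist}. All of that is sound. The problem is that your ``Claim'' is the entire content of the proposition, and you explicitly leave it as ``the main obstacle'' rather than proving it.

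Concretely, two steps are missing. First, the periodic tiling cannot be ``manufactured as in Lemma \ref{periodic tilings}'': that lemma needs exact recurrence of patches, whereas here the recurrence is only up to nonzero translations along $v$, so $T$ itself need not be periodic. What the paper does is pass to a limit $(Q,Q')$ of the recurring patch pairs, observe $\Phi^k(Q)\supset Q-sv$, and correct by the fixed point of $x\mapsto\lambda^kx-sv$, setting $\bar{T}:=\cup_{n\ge0}\Phi^{nk}\bigl(Q-\tfrac{s}{1-\lambda^k}v\bigr)$ (and similarly $\bar{T}'$); your phrase ``correcting at each stage by the parallel-to-$v$ registration'' gestures at this but is not an argument. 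Second, and more seriously, one must verify that $(\bar{T},\bar{T}')$ is a branch pair with $\tilde{\partial}S(\bar{T},\bar{T}')=\{\pm v\}$: the approximating points $\bar{T}-(\lambda^{m_ik}s_i-t)v$ have parameters of either sign, so unless both rays of $\bar{T}$ along $v$ lie in the locus, the approximation does not place $T-tv$ in $\mathcal{PBL}(\Phi)$; a priori the constructed periodic pair could be stably equivalent along $v$ (a corner pair) with only one ray contributing. This is the second, essential use of non-collapsing in the paper's proof: it gives $T-tv\nsim_sT'-tv$ for all $t\in\R$, which transfers to $\bar{T}-sv\nsim_s\bar{T}'-sv$ for all $s$ because the two pairs share patches on arbitrarily large balls. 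Your appeal to one-dimensional theory (\cite{BDH}) cannot substitute here, since stable equivalence quantifies over all tilings of $\R^2$ containing the given patches and is not captured by the induced one-dimensional dynamics on the branch line. Relatedly, your uniform treatment skips the case of a line pair one of whose boundary directions heals (so $\tilde{\partial}S(T,T')$ is a single direction); the paper handles this by a much simpler observation --- the pair translated to the first healing point is a corner or isolated (hence periodic) pair whose ray literally contains the $\tilde{\partial}$-ray of $(T,T')$ --- whereas in your scheme this case requires additional verification that is not supplied.
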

\begin{proof} Pick $(T,T')\in\mathcal{BP}(\Phi)$. Suppose there are $v\in\partial S(T,T')$ and $t>0$ with $T-tv\sim_sT'-tv$. Let $t_0:=inf\{t>0:T-tv\sim_sT'-tv\}$. Then $(T-t_0v,T'-t_0v)$ is either an isolated or a corner pair and hence is periodic. If $t_0\ne0$, then $\tilde{\partial} S(T,T')=\{-v\}\subset \tilde{\partial} S(T-t_0v,T'-t_0v)$ and $\cup_{t\ge0}\{T-t(-v)\}\subset\cup_{t\ge0}\{(T-t_0v)-t(-v)\}$. If there are not such $v$ and $t$, then either $(T,T')$ is an isolated pair, and hence in $\mathcal{PBP}$, or $(T,T')$ is a line pair
with $\tilde{\partial} S(T,T')=\{\pm v\}$ for some $v\in\mathbb{S}^1$. In the latter case, and using Lemma
\ref{finitely many patches}, there is a $k\in\N$ and a sequence $n_i\to\infty$ so that all of the pairs of patches $(B_1[\Phi^{-n_i}(T)],B_1[\Phi^{-n_i}(T')])$ and $(B_1[\Phi^{-n_i-k}(T)],B_1[\Phi^{-n_i-k}(T')])$ are
translates of the same pair of patches $(P,P')$. Moreover, we may take $n_i\equiv n_j$ mod($k$), and,
without loss of generality $n_i\equiv0$ mod($k$). As in the proof of Proposition \ref{periodic branch pairs exist}, the vectors translating the pairs of patches $(B_1[\Phi^{-n_i}(T)],B_1[\Phi^{-n_i}(T')])$ to the pairs of patches $(B_1[\Phi^{-n_i-k}(T)],B_1[\Phi^{-n_i-k}(T')])$ are all parallel with $v$. Passing to a subsequence, we may assume that $(B_1[\Phi^{-n_i-k}(T)],B_1[\Phi^{-n_i-k}(T')])\to (Q,Q')$. We have that
$\Phi^k(Q)\supset Q-sv$ and $\Phi^k(Q')\supset Q'-sv$ for some $s$. Then $\Phi^k(Q-(\frac{s}{1-\lambda^k})v)\supset Q-(\frac{s}{1-\lambda^k})v$ and  $\Phi^k(Q'-(\frac{s}{1-\lambda^k})v)\supset Q'-(\frac{s}{1-\lambda^k})v$. Let $\bar{T}:=\cup_{n\ge0}\Phi^{nk}(Q-(\frac{s}{1-\lambda^k})v)$ and
$\bar{T}':=\cup_{n\ge0}\Phi^{nk}(Q'-(\frac{s}{1-\lambda^k})v)$. Then $(\bar{T},\bar{T}')$ is $\Phi$-periodic
of period $k$ and is a line branch pair with $\tilde{\partial} S(\bar{T},\bar{T}')=\{\pm v\}$ by the non-collapsing assumption. Moreover, there are bounded $s_i$ so that $B_1[\Phi^{-n_i}(T)]=B_1[\bar{T}-s_iv]$. Let $n_i=m_ik$. Then, for any $t\in\R$, $B_{\lambda^{m_ik}}[T-tv]=B_{\lambda^{m_ik}}[\bar{T}-(\lambda^{m_ik}s_i-t)v]$ for all $i$ so that $\bar{T}-(\lambda^{m_ik}s_i-t)v\to T-tv$ as $i\to\infty$. Thus $T-tv\in\mathcal{PBL}$ so that $\mathcal{BL}=\mathcal{PBL}$.
\end{proof}

\smallskip
\noindent\textbf{Example 1: The half-hex.}
\smallskip

This is a self-similar Pisot substitution tiling space with $\lambda=2$ and substitution rule shown in Figure \ref{fig:half hex substitution}.  There are six prototiles, consisting of a half hexagon tile and its rotations through multiples of $\pi/6$.

\begin{figure}[htb]
\begin{center}
\includegraphics[height=.8in, width=3in]{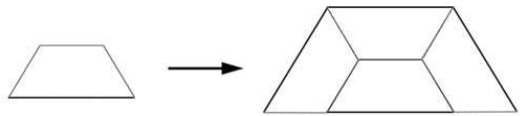}
\end{center}
\caption{Half-hex substitution rule.}
\label{fig:half hex substitution}
\end{figure}

 Six branch pairs are created from three distinct tilings, $T_1$, $T_2$, and $T_3$, with patches  at the origin shown in Figure \ref{fig:half hex patterns}.  These tilings are fixed by the substitution and are identical off the support of the pictured patches; hence $S(T_i, T_j) = \mathbb{S}^1, i, j \in \{ 1,2,3\}$, $i\not = j$. That is, these are isolated pairs. There are no other branch pairs so the branch locus is $\mathcal{BL} = \{T_1, T_2, T_3\}$.

\begin{figure}[htb]
\begin{center}
\includegraphics[height=1in, width=3in]{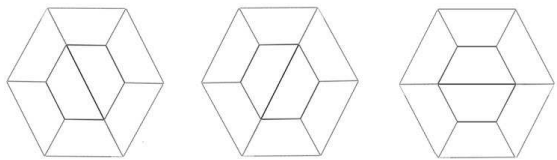}
\end{center}
\caption{The 1-patches of the three elements of the half-hex branch locus. }
\label{fig:half hex patterns}
\end{figure}

\smallskip
\noindent\textbf{Example 2: The chair.}
\smallskip

This is also a self-similar Pisot substitution tiling space with $\lambda=2$. There are four square prototiles for the chair substitution $\Phi_C$. (We will describe the ``square chair" - see \cite{Sa} for it's equivalence with the more familiar substitution with chair-shaped tiles.) Let $\rho_0$ be the unit square tile with the origin at the center and marked with a northeast pointing arrow, and let $\rho_i:=r^i(\rho_0)$, where
$r$ denotes rotation through $\pi/2$. By this we mean, for example, that $\rho_1$ has the same support as $\rho_0$ but is marked with a northwest pointing arrow. The substitution on $\rho_0$ inflates by a factor of $2$ and fills in the $2\times2$ square with translates of the $\rho_i$ as pictured.

\smallskip
$$\neabox \to \begin{matrix}
\seabox \neabox\cr
\neabox \nwabox \end{matrix}$$
\smallskip

Let $\Phi_C(\rho_i):=r^i(\Phi_C(\rho_0))$. The tilings $T_i:=\cup_{n\ge1}\Phi_C^n(\rho_i)$ are fixed by
$\Phi_C$ and are members of corner pairs: $S(T_i,T_{i+1})=r^i(\{v\in\mathbb{S}^1:\langle v,(0,-1)\rangle>-\sqrt{2}/2\})$, with the subscripts on the $T$'s taken mod(4). These are all the corner pairs, there are no isolated pairs, and all members of line pairs have zero-patch equal to: a translate of $\rho_0$ or $\rho_2$ in a northeast-southwest direction; a translate of $\rho_1$ or $\rho_3$ in a northwest-southeast direction; or a collection of four prototiles with all arrows pointing towards, or away from, the origin. There are five zero-patches of the latter form: $B_0[T_i],i=0,\ldots,3$, and the patch $P$ pictured below.

\smallskip
$$P=\begin{matrix}
\nwabox \neabox\cr
\swabox \seabox \end{matrix}$$
\smallskip

Let $T^*:=\cup_{n\in\N}\Phi_C^n(P)$ be the tiling, fixed under $\Phi_C$ with $B_0[T^*]=P$. The branch locus of $\Phi_C$ then consists of the disjoint union of four dyadic solenoids,
$S_i:=cl(\{T_i-tr^i((\sqrt{2}/2,\sqrt{2}/2)):t\in\R\}),i=0,\ldots,3$, together with four rays, $R_i:=
\{T^*-tr^i((\sqrt{2}/2,\sqrt{2}/2)):t\ge0\}, i=0,\ldots,3$, joined at their common endpoint $T^*$. Each ray $R_i$ winds densely on $S_i$. $\mathcal{BP}(\Phi_C)$ is non-collapsing, so $\mathcal{PBL}(\Phi_C)=
\mathcal{BL}(\Phi_C)$.

The branch locus for the chair tiling can be described as an inverse limit of an expanding Markov map
(see Theorem \ref{structure of branch locus} below) as follows. Let $X:=\T\cup [1,3]$ be the ``circle with sticker"  in the complex plane consisting of the union of the unit circle $\T$ and the interval on the real axis from 1 to 3, and let $f_0:X\to X$ by:
\[f_0(z)=\left\{ \begin{array}{ll}
   z^2 &  \mbox{if $z\in\T$},\\
   exp(2\pi zi) &  \mbox{if $z\in [1,2]$},\\
   2z-3 & \mbox{if $z\in [2,3].$}
 \end{array} \right.\]
 Now let $K:=\cup_{i=0,\ldots,3}r^i(X-3)$ be the union of four copies of $X$, joined at their endpoints,
 and let $f:K\to K$ be given by $f(z):=r^i(f_0(r^{-i}(z)+3)-3)$ for $z\in r^i(X-3)$. Then $\inv f$ is homeomorphic with $\mathcal{BL}(\Phi_C)$ by a homeomorphism that conjugates $\hat{f}$ with $\Phi_C|_{\mathcal{BL}(\Phi_C)}$.
\smallskip

\smallskip
\noindent\textbf{Example 3: The octagonal tiling.}
\smallskip

The (undecorated) octagonal tiling space is a self-similar Pisot substitution tiling space with $\lambda=1+\sqrt{2}$. There are 20 prototiles: four unmarked rhombi, $\rho_i=r^i(\rho_0)$, $i=0,\ldots3$, with $r^4(\rho_0)=\rho_0$, $r$ being rotation through $\pi/4$,  with the origin at their centers; and sixteen marked isosceles right triangles $\tau_i=r^i(\tau_0),\tau'_i=r^i(\tau'_0)$,
$i=0,\ldots,7$, with $\tau_i$ and $\tau'_i$ having the same support but bearing different marks. The origin is at the midpoint of the hypotenuse of the triangular prototiles. The octagonal substitution, $\Phi_O$, is described in the following figure.

\begin{figure}[htb]
\begin{center}
\includegraphics[height=3in, width=3in]{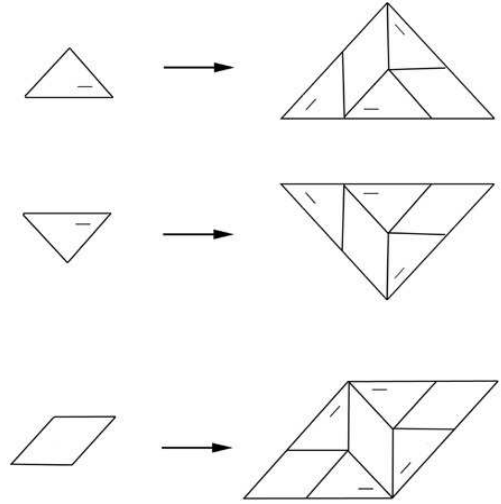}
\end{center}
\caption{Octagonal substitution rule.}
\label{fig:Oct substitution}
\end{figure}

If $\rho'_0$ denotes the translate of $\rho_0$ with the origin slightly off-center (see Figure \ref{fig:Oct one patches}), then $\rho'_0\in\Phi_O^2(\rho'_0)$ so that the tilings $T_{1,i}:=\cup_{n\in\N}\Phi_O^{2n}(r^i(\rho'_0))$, $i=0,\ldots,7$, are fixed by $\Phi_O^2$. Also, $\tau_i\in\Phi_O^2(\tau_i)$ and $\tau'_i\in\Phi_O^2(\tau'_i)$, so the tilings $T_{2,i}:=\cup_{n\in\N}\Phi_O^{2n}(\{\tau_i,\tau'_{i+4}\})$, $i=0,\ldots,7$ and $i+4$ taken mod(8), are fixed by $\Phi_O^2$. 
Note that $\Phi_O(T_{j,i})=T_{j,i+4}$, for $j=1,2$ and $i=0,\ldots,7$, with subscripts taken mod(8).

\begin{figure}[htb]
\begin{center}
\includegraphics[height=1.5in, width=3in]{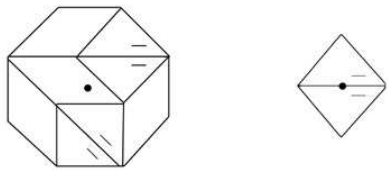}
\end{center}
\caption{1-patches of the tilings $T_{1,0}$ and $T_{2,0}$. Black dots indicate the placement of the origin. }
\label{fig:Oct one patches}
\end{figure}

The tilings $T_{1,i}$ and $T_{1,i+3}$ are members of a branch line pair, as are $T_{1,i}$ and $T_{1,i+5}$
(subscripts taken mod(8)), and the corresponding sectors have boundaries: $\partial S(T_{1,i},T_{1,i+3})=\{\pm r^i((\sqrt{2}/2,\sqrt{2}/2))\}$; and $\partial S(T_{1,i},T_{1,i+5})=\{\pm r^i((0,1))\}$. Let $\Omega_i:=
cl(\{T_{1,i}-tr^i((\sqrt{2}/2,\sqrt{2}/2)):t\in\R\})$. $\mathcal{BP}(\Phi_O)$ is non-collapsing, and the branch locus of the octagonal tiling is $\mathcal{BL}(\Phi_O)=\mathcal{PBL}(\Phi_O)=\cup_{i=0}^7\Omega_i$ with the $\Omega_i$ intersecting as follows: $$\Omega_0\cap\Omega_j=
\left\{\begin{array}{cc} \{T_{1,0}\}, & j=1\\ \{T_{2,0}\}, & j=2\\ \emptyset, & j=3\\ \emptyset, & j=4\\ \emptyset, & j=5\\ \{T_{2,6}\}, & j=6\\ \{T_{1,7}\}, & j=7.\end{array} \right. $$
Note that $r$ determines a homeomorphism of $\Omega_O$ by $r(\{\sigma_i+v_i\}):=\{r(\sigma_i)+r(v_i)\}$ for $v_i\in\R^2$ and prototiles $\sigma_i$. Then $\Omega_i=r^i(\Omega_0)$ and the intersections of 
$\Omega_i$ with the various $\Omega_j$ are obtained by applying $r_i$ to the above. $\Omega_0$ is itself (homeomorphic with) a one-dimensional substitution tiling space which can be described as follows. Let $X$ be a wedge of three circles, labeled by $1,2$ and $3$, and let $f:X\to X$ be the Markov map that maps these circles according to the pattern of the substitution $\phi$:
$$1\mapsto 23131$$
$$2\mapsto 23231$$
$$3\mapsto2323131.$$
Then $\Phi_O^2|_{\Omega_0}$ is conjugate with the shift $\hat{f}$ on $\Omega_{\phi}\simeq\inv f\simeq\Omega_0$. To describe all of $\mathcal{BL}(\Phi_O)$, let $x_0,x_1,x_2$, and $x_3$ be the fixed points of $\hat{f}$ corresponding to
the last occurrence of 3 in $\phi(3)$, the first occurrence of 3 in $\phi(3)$, the first occurrence of 1 in $\phi(1)$, and the second occurrence of 2 in $\phi(2)$, resp. (These points correspond to $T_{1,0},T_{1,7},T_{2,0}$, and $T_{2,6}$, resp.) Let $B:=(\cup_{i=0}^7\inv{f}\times \{i\})/\sim$, in which $\sim$ identifies points as follows: $(x_0,i)\sim(x_1,i+1),(x_1,i)\sim(x_0,i+7),(x_2,i)\sim(x_3,i+2)$, and $(x_3,i)\sim(x_2,i+6)$. $\Phi_O^2|_{\mathcal{BL}(\Phi_O)}$ is then conjugate with $F:B\to B$, where $F([(x,i)]):=[(\hat{f}(x),i)]$.

\begin{figure}[htb]
\begin{center}
\includegraphics[height=2in, width=2in]{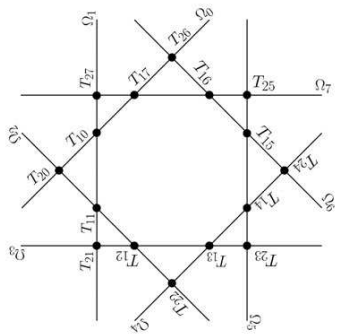}
\end{center}
\caption{Schematic of the branch locus of the octagonal tiling space.}
\label{fig:Oct branch locus}
\end{figure}

The octagon tilings can also be obtained from a cut-and-project scheme (see \cite{FHK}) in which the ``window" is an octagon. In this presentation, the ambiguity introduced by each of the eight  boundary segments of the octagon is responsible for each of the eight pieces $\Omega_i$ of the branch locus.

\bigskip

The following Theorem guarantees that, unless the branch locus consists of just a finite number of points (as for the half-hex), the branch locus is rather complicated.

\begin{theorem}\label{two sectors} If $\Omega$ is a 2-dimensional, non-periodic, FLC,  self-similar substitution tiling space
that has no isolated branch pairs, then there are branch pairs $(T_1,T'_1),(T_2,T'_2)\in\Omega\times\Omega$
with $\partial S(T_1,T'_1)\ne \partial S(T_2,T'_2)$. Moreover, if $\Omega$ also has no corner branch pairs, then all line branch pairs  $(T,T')$ are non-collapsing in the sense that $T-tv\nsim_sT'-tv$ for all $t\in\R,v\in\partial S(T,T')$.
\end{theorem}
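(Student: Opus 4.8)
\noindent\emph{The engine and three standing facts.} Write $\mathcal{S}(T,T'):=\{x\in\R^2:T-x\sim_s T'-x\}$ for the stable set of a pair. My whole argument rests on three facts. First, sectors are open (Lemma \ref{all sectors open}). Second, $\nsim_s$ is closed \emph{jointly} in the pair and the translation: $\{(S,S'):S\nsim_s S'\}$ is closed, because by Lemma \ref{stable mfd} the relation $S\sim_s S'$ is the open condition $B_1[\Phi^nS]=B_1[\Phi^nS']$ for some $n$. Third, self-similarity gives $\mathcal{S}(\Phi^k T,\Phi^k T')=\lambda^k\,\mathcal{S}(T,T')$, since $\Phi^k$ conjugates $\sim_s$ and scales $\R^2$ by $\lambda^k$. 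The single device I use to \emph{manufacture} branch pairs is a nearest-point reading of Lemma \ref{branch points with direction}: given $p\in\mathcal{S}(T,T')$, set $r:=\mathrm{dist}(p,\mathcal{S}(T,T')^c)$ and let $q$ be a nearest non-stable point; then Lemma \ref{branch points with direction} applied to $(T-p,T'-p)$ with $u:=(q-p)/r$ yields a branch pair whose sector contains the open semicircle $\{w:\langle w,-u\rangle>0\}$.

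\medskip
\noindent\emph{Part 2.} Assume no isolated and no corner pairs, so every branch pair is a line pair, and suppose some line pair $(T,T')$ with axis $\ell=\R v$ and sector the open upper semicircle $U$ collapses, i.e.\ $\mathcal{S}(T,T')\cap\ell\neq\emptyset$. Then $Z:=\{t:T-tv\nsim_s T'-tv\}$ is a proper closed subset of $\R$ containing $0$; by the N.B.\ following Lemma \ref{all sectors open} it is unbounded on at least one side, so $\R\setminus Z$ has either a bounded component $(\alpha,\beta)$ or an unbounded component with a finite endpoint. In the latter case I base at that finite endpoint $\alpha v$: one checks $\hat T-tv\sim_s\hat T'-tv$ for \emph{all} $t>0$ while every upward ray from $\alpha v$ stays in the upper half-plane, so the sector of $(\hat T,\hat T')$ contains $U\cup\{v\}$; being open it then contains a closed semicircle in its interior, making the pair a corner or isolated pair — a contradiction. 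In the bounded case I scale: $\Phi^k(T,T')$ has stable interval $(\lambda^k\alpha,\lambda^k\beta)$, so basing at its right endpoint and passing to a subsequence $A_k\to A_\infty$ (compactness and Lemma \ref{branch pairs closed}) gives a branch pair with $U\subseteq S(A_\infty,A'_\infty)$; for each fixed $s_0>0$ one has $A_k+s_0v\sim_s A'_k+s_0v$ once $\lambda^k(\beta-\alpha)>s_0$, and closedness of $\nsim_s$ forces $A_\infty+s_0v\sim_s A'_\infty+s_0v$, i.e.\ $-v\in S(A_\infty,A'_\infty)$. Again the sector contains $U\cup\{-v\}$ and hence a closed semicircle in its interior, so the limit pair is a corner or isolated pair — contradiction. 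Hence no line pair collapses, which is the asserted conclusion.

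\medskip
\noindent\emph{Part 1.} Assume for contradiction that every branch pair has the same boundary set $\{a,b\}$; one branch pair exists (Propositions \ref{agree in half-space}, \ref{branch points}) and is not isolated. If $\{a,b\}$ is non-antipodal, every pair is a corner pair, the non-stable cone lies strictly on one side of the line $\R a$, and (since $-a$ is interior to the sector) the N.B.\ gives instability along $a$ as $t\to+\infty$. Running the engine from a probe point just inside the sector along the edge-ray $a$, with the probe receding to infinity, drives the nearest-point direction to $\pm a^\perp$, so the manufactured sector contains the semicircle bounded by $\{a,-a\}$; as $b\neq -a$ one verifies this sector's boundary is not $\{a,b\}$, a contradiction. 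If instead $\{a,b\}=\{v,-v\}$ is antipodal, every branch pair is a line pair with common axis $\ell$, so there are no corner or isolated pairs; were any to collapse, the Part 2 construction would produce a corner pair with non-antipodal boundary, contradicting the single-boundary assumption. Thus Part 2 applies and every such line pair is non-collapsing, i.e.\ $\ell\subseteq\mathcal{S}^c$ while the upper half-plane lies in $\mathcal{S}$.

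\medskip
\noindent\emph{The crux (main obstacle).} The remaining configuration — all branch pairs non-collapsing line pairs sharing a single axis $\ell$ — is the hard one, and where I expect the real difficulty. The problem is that probing $\mathcal{S}$ perpendicular to $\ell$ always returns the ``wall'' $\ell$ as the nearest non-stable set and so only reproduces the axis $\ell$; a transverse axis can surface only at a point of $\partial\mathcal{S}$ whose supporting direction is not parallel to $\ell$. My plan is to locate such a point by combining the self-similar scaling (which renders $\mathcal{S}^c$ invariant up to the factor $\lambda$, hence self-accumulating) with the convex-polygonal hypothesis: the fault of $(T,T')$ along $\ell$ is carried by the tiles straddling $\ell$, and inflating by $\Lambda=\lambda I$ forces these tiles to abut along edges in a second direction, at which the stable region turns a genuine corner; the nearest-point engine there then produces a branch pair whose sector is a semicircle about a transverse direction, with boundary different from $\{v,-v\}$. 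Making this precise — extracting from the inflation a boundary point of $\mathcal{S}$ with a non-$\ell$ support direction, and ruling out that $\mathcal{S}^c$ is a union of lines parallel to $\ell$ (which would contradict non-periodicity) — is the substantive step, and completing it finishes Part 1.
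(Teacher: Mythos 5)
There are two genuine gaps, one in each half of your argument, and together they leave both assertions of the theorem unproved. The more serious gap is in Part 1: the configuration you yourself flag as ``the crux'' --- all branch pairs are line pairs with one common axis $\ell$ --- is exactly where the content of the theorem lies, and you offer only a plan. In the paper this case is the bulk of the proof: starting from a repeated tile ($\tau,\tau-x\in T$, $T$ fixed by $\Phi$), one shows that if every pair manufactured by the nearest-point device has the same direction, then the set $\{y: T-y\sim_s T-x-y\}$ organizes into parallel strips whose boundary lines are $\epsilon$-characteristics; Lemma \ref{finitely many cuts} then produces translation vectors $w(l)$ along these lines, of uniformly bounded length and finite in number up to translation, and Lemma \ref{strips} (a $\Z$-module argument resting on Lemmas \ref{no slip} and \ref{T=T'}) shows that finitely many such vectors force a translational period of $T$, contradicting non-periodicity. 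Your plan gestures at the right target (contradict non-periodicity via the fault structure along $\ell$), but none of these quantitative steps is carried out. A secondary problem in Part 1: in the non-antipodal case, the step ``one verifies this sector's boundary is not $\{a,b\}$'' is false as stated. The manufactured sector contains the open semicircle bounded by $\pm a$ on the side away from $b$, and that semicircle is contained in the long arc from $a$ to $b$, so the new pair may perfectly well have boundary $\{a,b\}$ again; to get a contradiction one needs more (for instance: all such pairs are corner pairs, hence periodic and finitely many by Proposition \ref{periodic branch pairs exist}, so two of the pairs $(T-t_na,T'-t_na)$ coincide, giving a translational period of $T$).

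The second gap is a directional error in the bounded-component case of Part 2. You have stably equivalent pairs $(A_k+s_0v,A'_k+s_0v)$ converging to $(A_\infty+s_0v,A'_\infty+s_0v)$ and conclude the limit is stably equivalent ``by closedness of $\nsim_s$''. Closedness of $\nsim_s$ says limits of \emph{non}-equivalent pairs are non-equivalent; what you need is closedness of $\sim_s$, and that is false --- indeed it contradicts your own standing fact. The stable set $\{x:T-x\sim_s T'-x\}$ of a fixed pair is open (this is precisely the paper's remark after Lemma \ref{stable mfd}), so for any branch pair $(T,T')$ and $u\in S(T,T')$ the pairs $(T-\frac{1}{n}u,\,T'-\frac{1}{n}u)$ are stably equivalent and converge to the non-equivalent pair $(T,T')$: limits of stably equivalent pairs need not be stably equivalent. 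The paper avoids taking any such limit: at the collapse point it uses finiteness of patch pairs to find a repetition $(B_0[\Phi^{n+k}(T-t_0v)],B_0[\Phi^{n+k}(T'-t_0v)])=(B_0[\Phi^{n}(T-t_0v)],B_0[\Phi^{n}(T'-t_0v)])$ with zero relative translation (the translation must be parallel to $v$ and is killed once $\lambda^n\epsilon$ exceeds tile diameters), builds the $\Phi^k$-periodic pair $\bar{T}=\cup_{m\ge0}\Phi^{mk}(B_0[\Phi^{n}(T-t_0v)])$, $\bar{T}'=\cup_{m\ge0}\Phi^{mk}(B_0[\Phi^{n}(T'-t_0v)])$, and then uses \emph{periodicity}, not a limit, to propagate the short stable segment along the entire ray, making $(\bar{T},\bar{T}')$ a corner or isolated pair. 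Your unbounded-endpoint case is correct, but the bounded case genuinely needs this periodic-pair construction; as written the step fails, and since your Part 1 antipodal case quotes Part 2, that gap propagates there as well.
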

{\em Sketch of proof:} We will see rather easily that if $\Omega$ has a corner pair then $\Omega$ also has a line pair. With a bit more work we will show that if $\Omega$ has only line pairs with parallel directions, then $\Omega$ has a nonzero translational period. The detailed proof is deferred to the next section.
\\
\\
We'll say that a continuous map $f:K\to K$ on a finite 1-dimensional simplicial complex $K$ with vertices $V$ is an {\em expanding Markov} map provided $f(V)\subset V$, $f^{-1}(V)$ is finite, and $f$ is one-to-one and stretches length by a factor of at least $\lambda$, for some $\lambda>1$, on each component of $K\setminus (V\cup f^{-1}(V))$. Such maps are quite similar to the Williams presentations of 1-dimensional hyperbolic attractors (\cite{W}). In particular, at all but finitely many points, the inverse limit space $\Lim f$ is homeomorphic with the product of a 0-dimensional set with an arc.

\begin{theorem}\label{structure of branch locus} If $\Omega_{\Phi}$ is a 2-dimensional, non-periodic, FLC, self-similar substitution tiling space and either $\mathcal{BP}(\Phi)$ is non-collapsing or $\Phi$ is Pisot, then there is a finite 1-dimensional simplicial complex $K$ and an expanding Markov map $f:K\to K$ so that $\Phi|_{\mathcal{PBL}(\Phi)}$ is topologically conjugate with $\hat{f}$ on
$\Lim f$.
\end{theorem}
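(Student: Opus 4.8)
The plan is to present $\mathcal{PBL}(\Phi)$ as a one-dimensional expanding attractor for the homeomorphism $\Phi$ and then to realize such an attractor, in the manner of Williams (\cite{W}), as the inverse limit of an expanding Markov map on a branched $1$-manifold. First I would record the finite combinatorics: by Theorem \ref{finitely many sectors} there are only finitely many sectors $S(T,T')$, by Proposition \ref{periodic branch pairs exist} only finitely many isolated and corner pairs and all of these are periodic, and by Lemma \ref{all sectors open} every branch pair is isolated, corner, or line with open sector. The members $T$ of the isolated and corner periodic pairs serve as junction points; for each periodic line pair $(T,T')$ and each $v\in\tilde{\partial}S(T,T')$ the set $\{T-tv:t\ge0\}$ is a branch arc; and $\mathcal{PBL}(\Phi)$ is the closure of the union of the finitely many $\Phi$-orbits of these arcs and junction points. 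Since $\Phi$ permutes the periodic branch pairs and, being self-similar, sends $T-tv$ to $\Phi(T)-\lambda tv$, the space $\mathcal{PBL}(\Phi)$ is compact and $\Phi$-invariant, $\Phi$ stretches each arc by the factor $\lambda$, and $\Phi$ contracts the transverse (canonical transversal) direction.

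Next I would control, along each branch direction, the set where asymptoticity fails. Fix a periodic line pair with $\Phi^{k}(T)=T$ and direction $v$, and put $A:=\{t\in\R:T-tv\nsim_sT'-tv\}$. By the remark after Lemma \ref{stable mfd}, $A$ is closed, and since stable relatedness is $\Phi$-invariant and $\Phi^{k}(T-tv)=T-\lambda^{k}tv$ we have $\lambda^{k}A=A$. When $\mathcal{BP}(\Phi)$ is non-collapsing, $A$ is an interval and $\mathcal{PBL}(\Phi)=\mathcal{BL}(\Phi)$ (Proposition \ref{PBL=BL}). When instead $\Phi$ is Pisot, the endpoints of $A$ are positions $t_0$ at which $(T-t_0v,T'-t_0v)$ is again a branch point; using the finiteness of branch-pair types (Lemma \ref{finitely many patches}, Proposition \ref{finitely many asymptotic partners}) together with the fact that for a Pisot $\lambda$ all control points lie in a uniformly discrete (Meyer) subset of $\R^2$, I would show these positions are uniformly discrete, so $\partial A\cap[1,\lambda^{k}]$ is finite and, by $\lambda^{k}$-invariance, $A$ is a locally finite union of closed intervals. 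In either case each branch arc is cut at only finitely many points per $\Phi$-period, and off this finite set of cut points and junction points $\mathcal{PBL}(\Phi)$ has the local structure arc $\times$ Cantor set, the transversal being $0$-dimensional by finite local complexity (Lemma \ref{finitely many patches}) and motion along $v$ remaining in $\mathcal{PBL}(\Phi)$ throughout the interior of an $A$-interval.

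I would then form $K$ by collapsing the transversal Cantor factor in each local chart, via a quotient $q:\mathcal{PBL}(\Phi)\to K$; the result is a finite $1$-dimensional simplicial complex whose vertex set $V$ consists of the (images of the) junction points and $A$-endpoints and whose edges are the branch arcs. Because $\Phi$ preserves transversal equivalence and carries $V$ into $V$ (junctions are $\Phi$-periodic and $\lambda^{k}A=A$ sends $A$-endpoints to $A$-endpoints), it descends to $f:K\to K$ with $f\circ q=q\circ\Phi$; since $\Phi$ is injective and stretches arcs by $\lambda$, $f$ satisfies $f(V)\subset V$, has $f^{-1}(V)$ finite, and is injective and length-stretching by at least $\lambda$ on each component of $K\setminus(V\cup f^{-1}(V))$, so that $f$ is an expanding Markov map. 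Finally, $T\mapsto(q(T),q(\Phi^{-1}T),q(\Phi^{-2}T),\dots)$ maps $\mathcal{PBL}(\Phi)$ into $\Lim f$ and intertwines $\Phi|_{\mathcal{PBL}(\Phi)}$ with $\hat{f}$; it is continuous and equivariant by construction, surjective because an inverse limit of the surjections $q$ surjects onto $\Lim f$, and injective because two points with equal images are transversally related under every backward iterate while the transversal is the direction that $\Phi^{-1}$ expands, forcing equality.

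The main obstacle is the second step. Without a hypothesis, $A$ could be a closed, $\lambda^{k}$-invariant Cantor-type subset of $\R$, and then $\mathcal{PBL}(\Phi)$ would not be the inverse limit of a \emph{finite} $1$-complex; the two alternative hypotheses are precisely what exclude this. The non-collapsing case is immediate, but the Pisot case rests on the uniform discreteness of the branch-transition positions, and the delicate points there are deriving that discreteness from the bounded-conjugate (Meyer) property and checking that passing to the closure in the definition of $\mathcal{PBL}(\Phi)$ introduces no new branch arcs, so that $K$ remains finite.
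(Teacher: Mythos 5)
Your overall architecture --- collapse the transverse Cantor direction to obtain a finite $1$-complex $K$ and recover $\mathcal{PBL}(\Phi)$ as $\varprojlim f$ --- is essentially the paper's, which realizes the quotient concretely as $K:=\pi(\mathcal{PBL}(\Phi))$ inside the Anderson--Putnam complex after first collaring tiles so that $\Phi$ forces the border; that collaring, which you never mention, is also what makes your injectivity argument for the map into $\varprojlim f$ legitimate. The genuine gap is in the Pisot case, and it is twofold. First, you are aiming at the wrong finiteness statement. Each ``branch arc'' $\{T-tv:t\ge0\}$ is an infinite ray, and its image in $K$ is a finite union of edges only if the ray crosses the tiles of $T$ in finitely many ways up to translation; this is the statement the paper labors to prove. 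Knowing that $A=\{t\in\R:T-tv\nsim_sT'-tv\}$ is a locally finite union of closed intervals would not give it: on the complementary gaps of $A$ the pair $(T-tv,T'-tv)$ is not a branch pair, so Lemma \ref{finitely many patches} gives no control over the patches $B_0[T-tv]$ there, yet those portions of the ray still belong to $\mathcal{PBL}(\Phi)$ by its definition and must map into finitely many segments of $K$.

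Second, your proposed mechanism --- ``for a Pisot $\lambda$ all control points lie in a uniformly discrete (Meyer) subset of $\R^2$'' --- does not engage the actual difficulty. The endpoints of $A$ are positions where a proximality relation changes; they are not control points of tiles, and nothing in the Meyer property of the tiling's point set implies that such transition positions are uniformly discrete, nor that the crossing data along the ray repeats. The paper's proof of the needed finiteness is genuinely global: it pushes the ray into the maximal equicontinuous factor $g:\Omega_{\Phi}\to\hat{\T}_A$, shows by solving linear equations over $\Q(\lambda)$ that the closure of its image is a solenoid of dimension at most $d=\deg(\lambda)$, manufactures a codimension-one cross-section from local stable manifolds of $\hat{F}_A$ to which the $\R$-orbit returns with bounded gaps, and then invokes the theorem of \cite{BKS} that $g$ is boundedly finite-to-one to conclude that $B_0[T-t'_iv]$ takes only finitely many values along a relatively dense set of times $t'_i$, whence only finitely many crossing types. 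Some such argument is indispensable precisely because the non-collapsing mechanism (every point of the ray is a branch point, so Lemma \ref{finitely many patches} applies everywhere) is unavailable; your sketch neither contains it nor points toward a substitute, and your own closing paragraph concedes that this is where the proof would have to live.
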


{\em Sketch of proof:} Let $X$ be the 1-collared Anderson-Putnam complex for $\Omega$ with $\pi:\Omega\to X$ and $F:X\to X$ so that $\pi\circ\Phi=F\circ\pi$. Then $\hat{\pi}:\Omega\to \hat{X}:=\inv F$
is a homeomorphism that conjugates $\Phi$ with $\hat{F}$ (see \cite{AP}). We will prove in the next section that, under the hypotheses of Theorem \ref{structure of branch locus}, the collection $\{(\tau,l)\}$
consisting of all pairs with $\tau\in T$ and $\tau\cap l\ne\emptyset$, where either $(T,T')$ is a periodic line pair for which $\tilde{\partial} S(T,T')=\partial S(T,T')=\{\pm v\}$ and $l=\{tv:t\in\R\}$, or $(T,T')$ is a corner pair with $v\in\tilde{\partial} S(T,T')$ and $l=\{tv:t\ge0\}$, is finite, up to translation. From this it will follow that the set $K:=\pi(\mathcal{PBL}(\Phi))\subset X$ is a finite 1-dimensional simplicial complex invariant under $F$. Then $f=F|_K$ is expanding Markov and $\Phi|_{\mathcal{PBL}(\Phi)}$ is topologically conjugate with $\hat{f}$ on $\Lim f$. A complete proof appears in the next section.

\begin{theorem}\label{topological invariance 2} Suppose that $\Omega$ and $\Omega'$ are homeomorphic 2-dimensional, non-periodic, FLC, self-similar substitution tiling spaces and at least one of the following conditions is met:
\begin{itemize}
\item $\Omega$ has an isolated or a corner branch pair;
\item $\Omega$ has line pairs $(T_1,T_2)$ and $(T_2,T_3)$ with $\partial S(T_1,T_2)\ne \partial S(T_2,T_3)$;
\item $\Omega$ is Pisot.
\end{itemize}
Then there is a homeomorphism from $\Omega$ to $\Omega'$ that takes branch pairs to branch pairs, preserving type, and that restricts to a homeomorphism of the branch loci and of the periodic branch loci.
\end{theorem}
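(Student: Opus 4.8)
The plan is to reduce all three cases to a single mechanism: producing one $\Phi$-periodic tiling whose image under a translate of the Kwapisz homeomorphism is $\Psi$-periodic. First I would apply Theorem 1.1 of \cite{Kwapisz} to get a homeomorphism $h_0:\Omega\to\Omega'$ and a linear isomorphism $L$ with $h_0(T-x)=h_0(T)-Lx$. Two features of $h_0$ drive the argument. Because it conjugates the translation actions and $\Omega,\Omega'$ are compact, $h_0$ preserves uniform asymptoticity: $T,T'$ are uniformly asymptotic in directions $S$ exactly when $h_0(T),h_0(T')$ are uniformly asymptotic in $L(S):=\{Lv/|Lv|:v\in S\}$. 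And since a linear isomorphism of $\R^2$ sends open half-planes to open half-planes, the induced map of $\mathbb{S}^1$ sends open semicircles to open semicircles, $\mathbb{S}^1$ to $\mathbb{S}^1$, and proper sets containing a closed semicircle in their interior to sets of the same kind; thus $L$ preserves the line/corner/isolated trichotomy. The reduction is this: once $h_0$ (after pre- and post-composing with translations) carries a single $\Phi$-periodic tiling to a $\Psi$-periodic one, Corollary 1.5 of \cite{Kwapisz} upgrades it to a homeomorphism $h$ with $h\circ\Phi^m=\Psi^n\circ h$; such an $h$ is linear, conjugates $\sim_s$, carries $\Phi$-periodic points and $\Phi$-stable manifolds to their $\Psi$-analogues, and (by the previous sentence) preserves sectors. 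Hence $h$ takes branch pairs to branch pairs preserving type and restricts to homeomorphisms $\mathcal{BL}(\Phi)\to\mathcal{BL}(\Psi)$ and $\mathcal{PBL}(\Phi)\to\mathcal{PBL}(\Psi)$. Throughout, the point I would lean on is that the \emph{substitution}-periodicity of a branch point is visible in agreement data alone (Lemmas \ref{existence of R}, \ref{periodic tilings}), and agreement data is precisely what $h_0$ preserves.

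If $\Omega$ has an isolated or corner pair $(T,T')$, the first hypothesis is immediate: $S(T,T')$ is open (Lemma \ref{all sectors open}) and contains a closed semicircle in its interior, so by Lemma \ref{existence of R} applied along each ray $\R^+v$ with $v$ in a slightly smaller closed neighborhood (and compactness of that neighborhood) $T$ and $T'$ are \emph{uniformly} asymptotic in a set of directions containing a closed $1$-hemisphere in its interior. This is exactly the hypothesis of Theorem \ref{topological invariance 1}, which already produces a conjugacy of substitution powers carrying the branch and periodic branch loci onto one another; type-preservation then comes for free from linearity.

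For the second hypothesis I would exploit the \emph{crossing} of two branch lines. With line pairs $(T_1,T_2)$, $(T_2,T_3)$ and $\partial S(T_1,T_2)=\{\pm v_1\}\ne\{\pm v_2\}=\partial S(T_2,T_3)$, the tiling $T_2$ agrees with $T_1$ on a half-space bounded by $\R v_1$ and with $T_3$ on a half-space bounded by $\R v_2$. Applying the recurrence argument of Propositions \ref{periodic branch pairs exist} and \ref{PBL=BL} to the triple $(B_1[\Phi^{-n}T_1],B_1[\Phi^{-n}T_2],B_1[\Phi^{-n}T_3])$, the translation realizing a recurrence of this triple is forced to be parallel to $v_1$ (from the first pair) and to $v_2$ (from the second); as $v_1\not\parallel v_2$ it must vanish, so Lemma \ref{periodic tilings} makes $T_2$ (hence $T_1,T_3$, by Corollary \ref{periodic branch points}) $\Phi$-periodic. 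This argument uses only FLC and agreement, so it runs unchanged in $\Omega'$: $h_0(T_2)$ is the common member of two asymptotic pairs whose half-space boundaries $L(\R v_1)$ and $L(\R v_2)$ still cross at the origin, whence $h_0(T_2)$ is $\Psi$-periodic. Thus $h_0$ itself pins the $\Phi$-periodic $T_2$ to the $\Psi$-periodic $h_0(T_2)$, and Corollary 1.5 of \cite{Kwapisz} finishes.

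The Pisot hypothesis is the hard case, and I expect it to be the main obstacle: here there may be no corner pair and no crossing of non-parallel branch lines, while the open-semicircle sector of a lone line pair fails the hypothesis of Theorem \ref{topological invariance 1}. My plan is to descend to the intrinsic one-dimensional structure. By Theorem \ref{structure of branch locus}, $\Phi|_{\mathcal{PBL}(\Phi)}$ is conjugate to $\hat f$ on the inverse limit of an expanding Markov map, so each component of $\mathcal{PBL}(\Phi)$ is a one-dimensional (Pisot) substitution tiling space; the same holds for $\Psi$, whose expansion is multiplicatively dependent on that of $\Phi$ by \cite{Kwapisz} and so is again Pisot. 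In such a one-dimensional piece the asymptotic pairs are a topological invariant and each contains a unique substitution-periodic tiling, by the one-dimensional rigidity recalled in the introduction (\cite{BS}); since $h_0$ preserves the asymptotic structure internal to the branch locus, it carries these canonical periodic tilings to the corresponding periodic tilings for $\Psi$, once more pinning a $\Phi$-periodic point to a $\Psi$-periodic one. The delicate step I would spend the most care on is confirming that the globally linear $h_0$ respects this intrinsic one-dimensional substitution structure closely enough to match periodic points (equivalently, that $h_0(\mathcal{PBL}(\Phi))=\mathcal{PBL}(\Psi)$ as a set). Granting this, Corollary 1.5 of \cite{Kwapisz} again delivers $h$ with $h\circ\Phi^m=\Psi^n\circ h$, completing the proof in all three cases.
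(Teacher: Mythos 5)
Your first two cases are sound and essentially match the paper: the isolated/corner case reduces to Theorem \ref{topological invariance 1} via Lemma \ref{existence of R}, and your crossing argument for two line pairs sharing the member $T_2$ is exactly the mechanism of the paper's Lemma \ref{triple} (the recurrence translation is forced parallel to both $v_1$ and $v_2$, hence vanishes), transferred to $\Omega'$ by the linearity of $h_0$ and closed by Corollary 1.5 of \cite{Kwapisz}.

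The Pisot case, however, has a genuine gap, and it sits exactly at the step you flag and then waive: ``granting that $h_0(\mathcal{PBL}(\Phi))=\mathcal{PBL}(\Psi)$ as a set.'' That statement is essentially the conclusion of the theorem, not an available input. The branch locus is defined through the stable relation $\sim_s$, i.e., through the substitution $\Phi$, and the Kwapisz homeomorphism $h_0$ only intertwines the translation actions; nothing forces it to respect $W^s(\cdot)$, substitution-periodic points, or the branch locus until \emph{after} one has invoked Corollary 1.5 of \cite{Kwapisz} --- which itself requires the pinned periodic point you are trying to produce. Consequently your plan to apply one-dimensional rigidity (\cite{BS}) inside the components of $\mathcal{PBL}(\Phi)$ cannot start: you have no map from $\mathcal{PBL}(\Phi)$ to $\mathcal{PBL}(\Psi)$ to which that rigidity could be applied. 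The paper's route through this case is different and is the real content of the theorem. If there are no isolated or corner pairs, Theorem \ref{two sectors} gives two \emph{non-collapsing} line pairs $(T_1,T'_1)$, $(T_2,T'_2)$ with non-parallel boundary directions $\{\pm v_1\}\ne\{\pm v_2\}$; but these pairs need not share a member, so your case-2 crossing argument does not apply in $\Omega$ itself. Instead one passes to the maximal equicontinuous factor $g:\Omega_{\Phi}\to\hat{\T}_A$ (this is where the Pisot hypothesis enters): since $g(T_i)=g(T'_i)$ by proximality, the sets $\cup_{t\in\R}W^s(g(T_i-tv_i))$ are non-parallel codimension-one affine subspaces of $\hat{\T}_A$, hence intersect, producing a \emph{virtual} intersection $g(T_1-t_1v_1)=g(T_2-t_2v_2)$. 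The finiteness, up to translation, of the patch data over $g$-fibers (\cite{BKS}) then feeds a quadruple-recurrence argument --- the recurrence translation is parallel to both $v_1$ and $v_2$, hence zero --- producing $\Phi$-periodic pairs with a common $g$-image; and because $h_0$ is linear, preserves proximality, and $\Omega_{\Psi}$ is again Pisot, the same argument runs on the images to show a translate of $h_0$ takes a $\Phi$-periodic point to a $\Psi$-periodic point. Only then does Corollary 1.5 apply. Without this device (or some substitute that extracts a pinned periodic point from the asymptotic data of non-intersecting line pairs), your Pisot case does not close.
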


{\em Sketch of proof:} As was the case for the proof of Theorem \ref{topological invariance 1}, the rigidity results of \cite{Kwapisz} will provide the key
to the proof of Theorem \ref{topological invariance 2}. First, we may replace an arbitrary homeomorphism of $\Omega$ and $\Omega'$ by a `linear' homeomorphism that preserves asymptoticity. To further improve the linear homeomorphism to a conjugacy (between some powers of
the substitution maps) it is necessary to show that the linear homeomorphism can be made to be `pointed', that is, to take some self-similar (i.e., periodic under substitution) tiling to a self-similar tiling.
This is accomplished by establishing the existence of tilings with such special asymptotic properties, that any tiling with these properties must be self-similar (at least up to translation). This existence is established by fiat in the first two of the alternative hypotheses of Theorem \ref{topological invariance 2}.
From Theorem \ref{two sectors} it follows that if $\Omega$ has neither isolated nor corner branch pairs, then $\Omega$ has non-parallel line branch pairs. Intersection of the corresponding lines would yield the second of the alternative hypotheses. In case $\Omega$ is Pisot we will see that these lines have a
virtual intersection when viewed in the maximal equicontinuous factor of $\Omega$. This virtual intersection will prove sufficient to pin down a linear homeomorphism of $\Omega$ and $\Omega'$ to a pointed homeomorphism. Once powers of the substitutions on $\Omega$ and $\Omega'$ are conjugated, Theorem \ref{topological invariance 2} will follow easily.

\section{Proofs of the main theorems}\label{proofs}

Throughout this section $\Omega_{\Phi}$ will always be a 2-dimensional non-periodic, minimal, self-similar tiling space with translationally finite local complexity.

Given $\epsilon>0$ and $T\in\Omega_{\Phi}$, a line $l$ with direction vector $v\in\mathbb{S}^1$ will be called an
{\em $\epsilon$-characteristic} for $T$ provided there is a $w\in\R^2$ and a $v^{\perp}\in\mathbb{S}^1$,
with $<v,v^{\perp}>=0$, so that for all $y\in l$ and $s\in (0,\epsilon]$: $T-y\nsim_s T-w-y$; and $T-y-sv^{\perp}\sim_s T-w-y-sv^{\perp}$.

Let $[(*,*)]$ denote the translation equivalence class of $(*,*)$.

\begin{lemma}\label{finitely many cuts}
Fix $\epsilon>0$ and $T\in\Omega_{\Phi}$. Then $\{[(\tau,\tau\cap l)]: l$ is an $\epsilon$-characteristic for $T$ and $\tau\in T\}$ is finite.
\end{lemma}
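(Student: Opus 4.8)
The plan is to encode a cut $[(\tau,\tau\cap l)]$ by three pieces of data and to show each ranges over a finite set: the translation class of the tile $\tau$, the direction $v$ of $l$, and the position of $l$ inside $\tau$. Finitely many tile classes come for free from FLC, so the work is to bound the directions and, for each fixed direction, the offsets. It is convenient, given a characteristic $l$ with data $v,v^\perp,w$, to write $T'=T-w$, translate a point $p\in\mathring\tau\cap l$ to the origin, and set $S=T-p$, $S'=T'-p$. The defining conditions then read $S-tv\nsim_s S'-tv$ for all $t$ (the whole line is ``unstable'') and $S-tv-sv^\perp\sim_s S'-tv-sv^\perp$ for $t\in\R$, $s\in(0,\epsilon]$ (a width-$\epsilon$ stable strip on the $v^\perp$ side). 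In particular $S-x\sim_s S'-x$ on the open ball $B_{\epsilon/2}(\frac{\epsilon}{2}v^\perp)$, which lies in the strip.

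\emph{Finitely many directions.} Since $\nsim_s$ is preserved by $\Phi$ and $\Phi(S-x)=\Phi(S)-\lambda x$, the pair $(\Phi^n S,\Phi^n S')$ has the same unstable line $\R v$ but a stable strip of width $\lambda^n\epsilon$. Setting $U=S-\frac{\epsilon}{2}v^\perp$, $U'=S'-\frac{\epsilon}{2}v^\perp$, we have $U-x\sim_s U'-x$ for $x\in B_{\epsilon/2}(0)$ and $U-ru\nsim_s U'-ru$ for $r=\epsilon/2$, $u=-v^\perp$ (the latter point is exactly the unstable origin $S\nsim_s S'$), so Lemma \ref{branch points with direction} produces a branch pair $(\bar T,\bar T')$ whose asymptotic sector contains the open semicircle $\{z:\langle z,v^\perp\rangle>0\}$. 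The point to extract is that $v$ is a \emph{boundary} direction of the sector rather than an interior one: the semicircle forces $v\in\overline{S(\bar T,\bar T')}$, while the fact that the \emph{entire} line $\R v$ stays unstable in the limit — $\nsim_s$ being closed — should keep $v$ out of the open sector. By Theorem \ref{finitely many sectors} there are only finitely many sectors, each with finite boundary, so only finitely many directions $v$ occur.

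\emph{Finitely many offsets, $v$ fixed.} Fix $v$ and $v^\perp$ and a radius $R$ exceeding every tile diameter plus $\epsilon$. For each direction-$v$ characteristic the pair $\mathcal P:=(B_R[U],B_R[U'])$ is stably related on $B_{\epsilon/2}(0)$, so by Lemma \ref{finitely many patch pairs} the pairs $\mathcal P$ fall into finitely many translation classes. In the frame of $\mathcal P$ the line $l$ is $\{x:\langle x,v^\perp\rangle=-\frac{\epsilon}{2}\}$ and $\tau$ is a tile of $\mathcal P$ that it crosses, so it remains to see that, within one class of $\mathcal P$, the position of $l$ relative to the tiles is confined to finitely many values; equivalently, that the transition from $\sim_s$ to $\nsim_s$ cannot occur along a continuum of parallel offsets. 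This is where Lemma \ref{no slip} enters: after inflating a fixed number of times — chosen, by a compactness argument in the spirit of Lemma \ref{T=T'}, so that the thin one-sided stable relation is promoted to honest $B_0$-agreement on a full neighborhood of the tiles met by $l$ — the transition line becomes a bounded-radius-local feature, and no slip forbids a one-parameter family of admissible relative positions. Finite local complexity then leaves only finitely many offsets, and together with the finiteness of directions this proves the lemma.

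The main obstacle is exactly this localization, and it underlies both halves of the argument. Stable equivalence is not detectable from any bounded patch: by Lemma \ref{stable mfd} the inflation power witnessing $\sim_s$ at a point depends on the point, so a priori neither the exact direction (as a sector boundary) nor the exact offset is readable from $\mathcal P$, and a free continuous offset is not obviously excluded. The remedy in both cases is the same device — trading the $\epsilon$-thin one-sided stable relation for genuine local agreement by inflating a uniform number of times, the uniformity coming from a compactness argument like those in Lemmas \ref{existence of R} and \ref{T=T'}. The delicate point to nail down is that the resulting limiting branch pair truly has $v$ on the boundary of its sector (so that Theorem \ref{finitely many sectors} bites), since the clean branch pair supplied by the inflation limit is naturally recentered off the unstable line; once that is secured, Theorem \ref{finitely many sectors}, Lemma \ref{finitely many patch pairs}, Lemma \ref{no slip}, and FLC handle the remaining bookkeeping.
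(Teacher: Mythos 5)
Your skeleton --- finitely many directions via Theorem \ref{finitely many sectors}, then finitely many offsets per fixed direction via Lemma \ref{finitely many patch pairs} --- is indeed the paper's skeleton, but both halves have gaps, and in the second half the mechanism you propose is not just incomplete but wrong. For the directions: Lemma \ref{branch points with direction} only asserts the existence of \emph{some} branch pair $(\bar T,\bar T')$ with $\{z:\langle z,v^{\perp}\rangle>0\}\subset S(\bar T,\bar T')$; nothing in its conclusion prevents $S(\bar T,\bar T')$ from being a corner sector or all of $\mathbb{S}^1$, in which case $v\notin\partial S(\bar T,\bar T')$ and Theorem \ref{finitely many sectors} gives no control on $v$. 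You flag this and propose to fix it by ``$\nsim_s$ being closed,'' but the closedness the paper establishes (the note after Lemma \ref{stable mfd}) is for a \emph{fixed} pair of tilings under varying translations; what you need is that non-stable-relation along the whole line survives the limits hidden inside Lemma \ref{branch points with direction} and Proposition \ref{branch points} --- limits of \emph{pairs of tilings}, recentered off the line --- and that fails for general sequences of pairs; it requires the rigid-convergence argument (as in Lemma \ref{finitely many patches}, via Lemma \ref{finitely many patch pairs}) applied to pairs stably related on a common ball. The clean route, and the paper's route, is to take subsequential limits of $(\Phi^k(T-y),\Phi^k(T-w-y))$ with $y\in l$: each such pair carries the full unstable line $\R v$ through the origin together with a one-sided stable strip of width $\lambda^k\epsilon$, rigidity transports both features to the limit, and the limit is forced to be a line pair with $\partial S=\{\pm v\}$. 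Your construction, by recentering to $U$ and invoking the two lemmas as black boxes, discards exactly the datum ($\nsim_s$ along all of $\R v$) that pins $v$ to a sector boundary.

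For the offsets, the step that would fail is the ``promotion'' step: there can be no inflation power giving $B_0$-agreement ``on a full neighborhood of the tiles met by $l$,'' because every such tile contains points $y\in l$, and $T-y\nsim_s T-w-y$ means $B_0[\Phi^n(T-y)]\ne B_0[\Phi^n(T-w-y)]$ for \emph{every} $n$; agreement across the line is impossible. Moreover, Lemma \ref{no slip} concerns a patch stably related on overlap with a translate of \emph{itself}, which is not the configuration here, and what must be excluded is not a one-parameter family but infinitely many distinct, accumulating perpendicular offsets. The missing ingredient --- the heart of the paper's proof --- is a collision argument: if two characteristics in the same translation class of pairs satisfy $(B_R[U_1],B_R[U'_1])=(B_R[U_2]+c,B_R[U'_2]+c)$ with $0<|\langle c,v^{\perp}\rangle|\le\epsilon$, then the unstable line of one pair is carried by $c$ into the $\epsilon$-wide stable strip of the other, contradicting the definition of $\epsilon$-characteristic; since the admissible $c$ within one class are bounded, infinitely many distinct cuts would force two offsets within $\epsilon$ of each other, finishing the proof. (The paper runs this same collision in slightly different coordinates: normalize $B_1[T-y_n]=B-z_n$ and $B_1[T-w_n-y_n]=B'-z'_n$ with $z_n,z'_n\to 0$, note that distinctness of cuts forces $\langle z_n-z_m,v^{\perp}\rangle\ne0$, use Lemma \ref{finitely many patch pairs} to make $\{z'_n-z_n\}$ finite, and pigeonhole to find two indices with the same relative displacement and $|z_{n+k}-z_n|<\epsilon$.) Without this step, your argument does not close; with it, Lemma \ref{no slip} is not needed at all in this part.
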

\begin{proof}
For each $\epsilon$-characteristic  $l$ there is a line pair $(T',T'')\in\mathcal{BP}(\Phi)$ with $\partial S(T',T'')=\{\pm v\}$ (let $(T',T'')$ be any subsequential limit of $(\Phi^k(T-y),\Phi^k(T-y-w))$, $k\in\Z, n\in\N,y\in l$), so there are only finitely many directions of $\epsilon$-characteristics by Theorem \ref{finitely many sectors}. It follows that there are only finitely many pairs $[(\tau,\tau\cap l)]$ with $\mathring{\tau}\cap l =\emptyset$ (tiles are connected). Thus, if there are infinitely many distinct $[(\tau,\tau\cap l)]$, there is a prototile $\rho$,  vectors $x_n$ with $\rho-x_n\in T$, and $\epsilon$-characteristics $l_n$ with $v^{\perp}(l_n)\equiv v^{\perp}$ so that $[(\rho+x_n, (\rho+x_n)\cap l_n)]$ are all distinct and $(\mathring{\rho}+x_n)\cap l_n\ne\emptyset$ for all $n\in\N$. Pick $y_n\in (\mathring{\rho}+x_n)\cap l_n$ and let $w_n=w(l_n)$ be as in the definition of $\epsilon$-characteristic. We may assume that the $B_1[T-y_n]$ are all of the same type,
say $B_1[T-y_n]=B-z_n$, and that the $B_1[T-w_n-y_n]$ are all of the same type, say $B_1[T-w_n-y_n]=
B'-z_n'$, with $z_n\to 0$ and $z_n'\to 0$. Note that since the $[(\rho+x_n, (\rho+x_n)\cap l_n)]$ are all distinct, $\langle z_n-z_m,v^{\perp}\rangle\ne 0$ for $n\ne m$.

Now $B-z_n$ is stably related with $B'-z_n'$ on $\{x:|x|<\epsilon$ and $\langle x,v^{\perp}\rangle>0\}$. It is a consequence of Lemma \ref{finitely many patch pairs} that $\{z_n'-z_n\}$ is finite. Pick a large $n$ and a $k>0$ with $|z_{n+k}-z_n|<\epsilon$ and $z'_{n+k}-z_{n+k}=z'_n-z_n$. Suppose $\langle z_{n+k}-z_n,v^{\perp}\rangle<0$. We have
$B-z_{n+k}\nsim_sB'-z'_{n+k}$ and $B-z_{n+k}+(z_{n+k}-z_n)\sim_sB'-z'_{n+k}+(z_{n+k}-z_n)$. But
$z_{n+k}-z_n=z'_{n+k}-z'_n$, so we have  $B-z_n\sim_sB'-z'_n$, that is, $T-y_n\sim T-w_n-y_n$, contradicting the assumption that $l_n$ is an $\epsilon$-characteristic for $T$. A similar contradiction is reached
if $\langle z_{n+k}-z_n,v^{\perp}\rangle>0$, hence there can be only finitely many  $[(\tau,\tau\cap l)]$.
\end{proof}

It follows from the above that there is a $\delta=\delta(\epsilon)>0$ so that if $l$ and $l'$ are two distinct, parallel $\epsilon$-characteristics for $T$, then the distance between $l$ and $l'$ is at least $\delta$.

\begin{lemma}\label{strips}
Suppose that $T\in\Omega_{\Phi}$ is fixed by $\Phi$ and that $\{l_n\}_{n\in\Z}$ is a family of parallel lines so that $l_n$ separates $l_{n-1}$ from $l_{n+1}$ for all $n$ and $\R^2=\cup_{n\in\Z} S_n$ where $S_n$ is the
open strip between $l_{n-1}$ and $l_{n+1}$. Suppose further that $\lambda\{l_n\}_{n\in\Z}\subset \{l_n\}_{n\in\Z}$ and there are nonzero $w_n$ parallel to the $l_n$,
 so that $T-y\sim_sT-w_n-y$ for all $y\in S_n$. Then $\{w_n\}$ is infinite.
\end{lemma}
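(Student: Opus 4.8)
The plan is to argue by contradiction: assume $\{w_n\}$ is finite and manufacture a nonzero translational period of $T$, contradicting non-periodicity. Throughout, call a \emph{$v$-strip} any open strip whose edges are parallel to the common direction $v$ of the $l_n$ (so it is invariant under translation by vectors parallel to $v$). For such a strip $O$ set $G(O):=\{w \text{ parallel to } v : T-y\sim_s T-w-y \text{ for all } y\in O\}$. First I would record three structural facts. (i) Since $O$ is invariant under translation parallel to $v$ and $\sim_s$ is an equivalence relation (Lemma \ref{stable mfd}), $G(O)$ is a subgroup of the line $\R v$. (ii) Since $\Phi(T)=T$ and $\Phi$ carries stable manifolds to stable manifolds, applying $\Phi$ to the defining condition gives the scaling relation $G(\lambda O)=\lambda\,G(O)$ (note $\lambda O$ is again a $v$-strip, as scaling fixes directions). (iii) $G$ is monotone: $O\subseteq O'$ implies $G(O')\subseteq G(O)$. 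The hypotheses say $w_n\in G(S_n)$, and for the nonempty overlap $U_j:=S_j\cap S_{j+1}$ (the $v$-strip between $l_j$ and $l_{j+1}$) monotonicity gives $w_j,w_{j+1}\in G(U_j)$.

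Next I would split into two cases according to whether the finitely many nonzero vectors $\{w_n\}$, all parallel to $v$, are pairwise \emph{commensurate}. In the commensurate case they lie in a common cyclic subgroup $\Z w_*$ of $\R v$, say $w_n=p_n w_*$ with $p_n\in\Z\setminus\{0\}$; here finiteness guarantees that $L:=\mathrm{lcm}_n|p_n|$ is finite. Then $Lw_*=(L/p_n)w_n\in\Z w_n\subseteq G(S_n)$ for \emph{every} $n$, so the single vector $Lw_*$ is a stable period valid on $\bigcup_n S_n=\R^2$. Lemma \ref{T=T'}, applied to $T$ and $T':=T-Lw_*$, then forces $T=T-Lw_*$ with $Lw_*\neq 0$, contradicting non-periodicity.

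In the incommensurate case some $w_a\not\sim w_b$. Because commensurability is an equivalence relation, running along the chain $a,a+1,\dots,b$ some \emph{consecutive} pair $w_j\not\sim w_{j+1}$ must be incommensurate, so $\langle w_j,w_{j+1}\rangle\subseteq G(U_j)$ is dense in $\R v$. I would then use self-similarity to fatten this thin overlap: by (ii), $G(\lambda^k U_j)=\lambda^k G(U_j)$ is again dense, while the width of $\lambda^k U_j$ is $\lambda^k\cdot\mathrm{width}(U_j)\to\infty$, so for large $k$ the strip $\lambda^k U_j$ contains a ball $B_R(c)$ with $R$ the constant of Lemma \ref{existence of R}. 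For an arbitrarily small nonzero $w\in G(\lambda^k U_j)$, Lemma \ref{existence of R} applied to the pair $(T-c,\,T-c-w)$ yields $B_0[T-c]=B_0[T-c-w]$; that is, $T$ admits arbitrarily small nonzero slips, which is impossible by the no-small-slip property underlying Lemma \ref{no slip}. This contradiction completes the incommensurate case, and hence shows $\{w_n\}$ must be infinite.

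I expect the main obstacle to be exactly the recognition that finiteness has to be exploited in two genuinely different ways, keyed to commensurability, and that the contradiction must be produced on an honest two-dimensional strip rather than on a single line. A tempting shortcut is to fix one line $l_{(d)}$ and use the relation $\lambda^{-k}w_{m(k)}\in G(l_{(d)})$ (with norms tending to $0$) to make the per-line period group dense; but per-line density does not localize to a $2$-dimensional neighborhood, so it does not feed into Lemma \ref{existence of R}. The two working mechanisms are therefore the heart of the argument: the lcm trick assembles a single globally valid period out of commensurate strip-periods, while scaling a thin incommensurate overlap up to a wide strip is what lets dense stable periods be converted, via Lemma \ref{existence of R}, into genuine small slips.
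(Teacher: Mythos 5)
Your proof is correct, but it is organized quite differently from the paper's, so it is worth comparing the two. The paper does not split into cases: it uses the hypothesis $\lambda\{l_n\}_{n\in\Z}\subset\{l_n\}_{n\in\Z}$ together with $\Phi(T)=T$ to produce a single reference vector $\lambda^N w_0$ that is a stable period on every strip $S_j$, $j\in\{-n_1,\ldots,n_2\}$, where $N$ is chosen (this is where finiteness of $\{w_n\}$ enters) so that all values of $w_n$ occur in that index range; it then considers the period module $M_j:=\{t\in\R: T-y\sim_s T-tw_j-y\ \forall y\in S_j\}$, shows each $M_j$ is cyclic via Lemma \ref{no slip} (your ``cyclic or dense'' dichotomy, applied strip by strip), concludes that every $w_j$ is commensurate with $\lambda^N w_0$, and assembles a common multiple $mw_0$ as a global stable period, contradicting non-periodicity via Lemma \ref{T=T'}. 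You instead dichotomize on whether the finitely many $w_n$ are pairwise commensurate: your commensurate case is exactly the paper's endgame (lcm in place of a product of denominators, then Lemma \ref{T=T'}), while your incommensurate case replaces the per-strip cyclicity argument by a density argument on a scaled overlap strip, closed with Lemma \ref{existence of R} plus the no-small-slip property. So both proofs run on the same two mechanisms---dense period groups are killed by no-slip, commensurate ones assemble into a translational period---but your decomposition buys something real: you never use the hypothesis $\lambda\{l_n\}\subset\{l_n\}$ at all (and use $\Phi(T)=T$ only in the incommensurate case), and blowing the overlap $U_j$ up to width greater than $2R$ before invoking Lemma \ref{existence of R} makes the no-slip step cleaner than the paper's application of Lemma \ref{no slip} on the fixed-width strips $S_j$, which tacitly requires a ball of definite size inside such a strip. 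What the paper's route buys is a single uniform argument, with an explicit period $mw_0$, and no case analysis. One point you should tighten: the closing appeal to ``the no-small-slip property underlying Lemma \ref{no slip}'' should be pinned to an actual statement---either Lemma 2.4 of \cite{sol}, quoted in the proof of Lemma \ref{no slip}, or, most directly, property (2) in the proof of Lemma \ref{asymptotic direction}, which says that $B_0[T_1]=B_0[T_1-x]$ with $0<|x|<\epsilon$ is impossible; that is precisely what your conclusion $B_0[T-c]=B_0[(T-c)-w]$ for arbitrarily small nonzero $w\in G(\lambda^k U_j)$ violates.
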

\begin{proof}
Suppose otherwise. Say $0\in S_0$. There is then $N$ large enough so that $\lambda^N\{l_{-1},l_1\}=\{l_{-n_1-1},l_{n_2+1}\}$
with  $\{w_n\}_{n\in\Z}=\{w_j\}_{j=-n_1\ldots,n_2}$. For $j\in\{-n_1,\ldots,n_2\}$ and $y\in S_j$ we have
$T-y\sim_sT-w_j-y$ and $T-y\sim_sT-\lambda^Nw_0$. Let $M_j:=\{t\in\R:T-y\sim_sT-tw_j-y$ for all $y\in S_j\}$. Then $M_j$ is a $\Z$-module. By Lemma \ref{no slip}, $M_j$ must have a smallest positive element, so $M_j$ is isomorphic with $\Z$. Thus, since $|\lambda^Nw_0|/|w_j|\in M_j$ for $j\in\{-n_1,\ldots,n_2\}$, there must be nonzero $k_j,m_j\in\Z$ with $k_jw_j=m_j\lambda^Nw_0$. Then $T-y\sim_sT-mw_0-y$ for all $y\in\R^2$ where $m=\lambda^N\prod_{j=-n_1}^{n_2}m_j$. It follows from Lemma \ref{T=T'} that $T-mw_0=T$, contradicting the non-periodicity of $\Omega_\Phi$.
\end{proof}

\begin{lemma} \label{corner}
Suppose that $T,T'\in\Omega_\Phi$, $u\in\mathbb{S}^1$, and $\delta>0$ are such that $T-y\sim_sT'-y$ for all $y$ with $0<\langle y,u\rangle<\delta$.
Suppose also that $T\nsim_sT'$ and there is $y_0$ with $\langle y_0,u\rangle=0$ so that $T-y_0\sim_sT'-y_0$. There is
then a pair $(\bar{T},\bar{T}')\in\mathcal{BP}(\Phi)$ and a vector $w$ not parallel with $u$ so that $\{v\in\mathbb{S}^1:\langle v,w\rangle \,>0\}\subset S(\bar{T},\bar{T}')$.
\end{lemma}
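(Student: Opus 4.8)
The plan is to locate the \emph{corner}, on the line $L_0:=\{y:\langle y,u\rangle=0\}$, at which stable agreement (present near $y_0$) gives way to disagreement (present at $0$), and then to grow a maximal ball of agreement whose first contact with the disagreement set points in a \emph{non-vertical} direction. Lemma \ref{branch points with direction} will then deliver the branch pair, with $w$ perpendicular-ish to $L_0$ rather than to the strip, so that $w\not\parallel u$.

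First I would pin down the corner. Write $N:=\{x\in\R^2:T-x\nsim_sT'-x\}$; by the remark following Lemma \ref{stable mfd}, $N$ is closed, so $A:=\R^2\setminus N$ is open. Parametrize the segment from $0$ to $y_0$ by $\gamma(\theta)=\theta y_0$, $\theta\in[0,1]$. Since $\gamma(0)=0\in N$ and $\gamma(1)=y_0\in A$, the number $\theta^*:=\sup\{\theta:\gamma(\theta)\in N\}$ is attained, $z:=\gamma(\theta^*)\in N$, $\theta^*<1$, and $\gamma(\theta)\in A$ for $\theta\in(\theta^*,1]$. Because $\langle z,u\rangle=0$, translation by $z$ preserves the strip $\{0<\langle y,u\rangle<\delta\}$; replacing $(T,T')$ by $(T-z,T'-z)$ and choosing orthonormal coordinates with $u=(0,1)$ and $y_0/|y_0|=(1,0)$, I may assume $0\in N$, that $A\supseteq\{(x,s):0<s<\delta\}$, and that $(t,0)\in A$ for all $t\in(0,b]$ for some $b>0$.

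Next comes the ball-growing. For $t\in(0,\min\{b,\delta\})$ set $r(t):=\mathrm{dist}((t,0),N)$. Then $0<r(t)\le t$ (positivity since $A$ is open, the bound since $0\in N$), the open ball $B_{r(t)}((t,0))$ lies in $A$, and it meets $N$ in a point $q_t$ on its bounding circle. Every point of the closed ball with positive second coordinate has height at most $r(t)\le t<\delta$, hence lies in the strip and so in $A$; therefore $q_t$ lies in the closed lower half-plane, and the unit contact direction $u'_t:=(q_t-(t,0))/r(t)$ is vertical only if $u'_t=(0,-1)$. Once I exhibit a $t^*$ with $u'_{t^*}$ non-vertical, Lemma \ref{branch points with direction}, applied to $(T-(t^*,0),T'-(t^*,0))$ with radius $r(t^*)$ and direction $u'_{t^*}$, yields $(\bar T,\bar T')\in\mathcal{BP}(\Phi)$ with $\{v:\langle v,w\rangle>0\}\subset S(\bar T,\bar T')$ for $w:=-u'_{t^*}$, and $w$ is then not parallel to $u$.

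The sole obstacle is the possibility that every contact is straight down, i.e. $q_t=(t,-r(t))$ for all $t$; this is where the real content lies, and I would dispose of it by a rigidity estimate. If $q_{t'}=(t',-r(t'))\in N$ for every $t'$, then since $B_{r(t)}((t,0))\cap N=\emptyset$ we must have $(t-t')^2+r(t')^2\ge r(t)^2$, that is $r(t)^2-r(t')^2\le(t-t')^2$; by symmetry $|r(t)^2-r(t')^2|\le(t-t')^2$ for all $t,t'$. Subdividing an interval into $n$ equal parts gives $|r(t)^2-r(t')^2|\le |t-t'|^2/n\to0$, so $r(\cdot)^2$ is constant; letting $t\to0^+$ and using $r(t)\le t$ forces that constant to be $0$, contradicting $r(t)>0$. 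Hence some $q_{t^*}$ is not straight down, and being in the lower half-plane it is non-vertical, which completes the argument. I expect the clean extraction of the corner point and this elementary but slightly delicate quadratic estimate to be the two steps requiring the most care.
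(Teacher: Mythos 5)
Your proof is correct. It shares its endgame with the paper's proof --- both produce a pair of tilings that are stably related on a small open ball but not at some point of its boundary circle, and then invoke Lemma \ref{branch points with direction}, the whole content being to make the contact direction non-parallel to $u$ --- but the mechanism by which you arrange that is genuinely different. The paper never isolates the corner point $z$: it takes a ball of agreement around $y_0$, pushes a one-parameter family of parabolic arcs (anchored at $y_0\pm\epsilon u$, vertex descending to $0$) through the strip, and takes the first arc that touches the disagreement set $N=\{x:T-x\nsim_s T'-x\}$. Since each arc is a graph over the $u$-direction, the normal at the point of first contact automatically has a nonzero component along $y_0$, so the degenerate ``vertical contact'' case you must worry about simply cannot occur; the price is the explicit curve family and the check that the swept region, and a small ball tangent to the critical arc, lie in the agreement region. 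Your maximal-ball construction $r(t)=\mathrm{dist}((t,0),N)$ is more naive and a priori admits the all-contacts-straight-down degeneracy; your quadratic estimate $|r(t)^2-r(t')^2|\le (t-t')^2$, forcing $r^2$ to be constant (the usual rigidity for a H\"older-type condition with exponent greater than $1$) and hence zero because $r(t)\le t$, disposes of it correctly --- and this is exactly where your recentering at the corner, which puts $0\in N$, is used. What your route buys is economy of machinery: nothing beyond closedness of $N$ (the remark after Lemma \ref{stable mfd}) and the distance function to a closed set; what the paper's route buys is a one-case argument needing no rigidity estimate. Both proofs use the hypotheses on $y_0$ and on the strip in the same essential way, and both applications of Lemma \ref{branch points with direction} are valid.
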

\begin{proof}
There is $\epsilon>0$ so that $T-y\sim_sT'-y$ for all $y\in cl(B_{\epsilon}(y_0))$. Consider the
curves $c_s:[-\epsilon,\epsilon]\to\R^2$ given by $c_s(t)=t(\frac{v}{|v|})+\{(\frac{\epsilon(1-s)+s|y_0|}{\epsilon^2})t^2-(1-s)\epsilon-s|y_0|\frac{y_0}{|y_0|}\}+y$. For each $s$, the image of $c_s$ is an arc of a parabola from $y_0-\epsilon\frac{v}{|v|}$ to $y_0+\epsilon\frac{v}{|v|}$ with vertex at $y_0-[(1-s)\epsilon+s|y_0|]\frac{y_0}{|y_0|}$. When $s=0$ this curve lies entirely in $cl(B_{\epsilon}(y_0))$ and when
$s=1$ the vertex is at $0$. Let $s_0$ be the infimum of those $s>0$ for which there is $t\in [-\epsilon,\epsilon]$ with $T-c_s(t)\nsim_sT'-c_s(t)$. Then there is $t_0\in (-\epsilon,0]$ with $T-c_{s_0}(t_o)\nsim_sT'-c_{s_0}(t_0)$. Let $w$ be the unit normal to the curve $c_{s_0}$ at $c_{s_0}(t_0)$ with
$\langle w,y_0\rangle>0$. For sufficiently small $r>0$ we have: $(T-c_{s_0}(t_0)-rw)-y\sim_s(T'-c_{s_0}(t_0)-rw)-y$ for all $y\in B_r(0))$ and  $(T-c_{s_0}(t_0)-rw)+rw\nsim_s(T'-c_{s_0}(t_0)-rw)+rw$. By Lemma \ref{branch points with direction} there is a branch pair $(\bar{T},\bar{T}')$ as desired.
\end{proof}

\noindent {\em Proof of Theorem \ref{two sectors}}. For the first part, we prove the equivalent statement: If $\Omega_\Phi$ is a 2-dimensional FLC self-similar substitution tiling space with connected tiles, then there are $(T_1,T'_1),(T_2,T'_2)\in\mathcal{BP}(\Phi)$ and non-parallel vectors $u_1,u_2$ so that $\{v\in\mathbb{S}^1:\langle v,u_i\rangle\,>0\}\subset S(T_i,T'_i)$ for $i=1,2$.

Let $T\in\Omega$ be fixed by $\Phi$. Let $\tau\in T$ and $x\ne0$ be such that $\tau-x\in T$. For $x_0\in\mathring{\tau}$ let $r>0$ be such that $T-x_0-y\sim_s T-x-x_0-y$ for all $y\in B_r(0)$ and  $T-x_0-y_0\nsim_sT-x-x_0-y_0$ for some $y_0$ with $|y_0|=r$. Then by Lemma \ref{branch points with direction} there is a branch pair, asymptotic in the positive half-plane determined by $z=z(\tau,x,x_0):=-y_0$. Hence we are done, unless all such $\tau,x,x_0$ produce parallel $z's$. For such $\tau,x,x_0,r$, and $y_0$, let $r_M=sup\{r':T-(x_0+(1-r'/r)y_0)-y\sim_s T-x-(x_0+(1-r'/r)y_0)-y$ for all $y\in B_{r'}(0)\}$.
It is easily checked that $r\le r'$. If $r'=\infty$, then $(T-x_0-y_0)-v\sim_s(T-x_0-y_0)-x-v$ for all $v$ with
$\langle v,-y_0\rangle\,>0$. By Lemma \ref{existence of R}, $(T-x_0-y_0)$ and $(T-x_0-y_0)-x$ would agree on
$\{v: \langle v,-y_0\rangle\ge R\}$ for some $R$. Pick $v_n$ with $\langle v_n,-y_0\rangle\to\infty$ and $(T-x_0-y_0)-v_n\to T'\in\Omega$. Then $(T-x_0-y_0)-x-v_n\to T'-x$ and $T'-x=T'$, contradicting non-periodicity of $\Omega$.
Thus $r\le r'<\infty$.

If it is not the case that $T-(x_0+(1-r_M/r)y_0)-y\sim_sT-x-(x_0+(1-r_M/r)y_0)-y$ for all $y\in cl(B_{r_M}(0))\setminus\{\pm (\frac{r_M}{r})y_0\}$, we are done, by Lemma \ref{branch points with direction}. Also, by maximality of $r_M$,  $T-(x_0+(1-r_M/r)y_0)-y\nsim_sT-x-(x_0+(1-r_M/r)y_0)-y$ for $y=\pm(\frac{r_M}{r})y_0$.

Let us suppose that all the $z(\tau,x,x_0)$ are parallel. Choose a unit vector $z$ parallel  to all these, and let $z^{\perp}$ be a unit vector perpendicular  to $z$. Let $S=S(\tau,x,x_0)$ be the infinite open strip $S:=\{sz^{\perp}+x_0+(1-t)y_0+t(\frac{r-2r_M}{r})y_0:s\in\R,0< t<1\}$. We claim that $T-y\sim_sT-x-y$ for all $y\in S$. For if this were not so, we could push parabolic curves out along $S$, as in the proof of Lemma \ref{corner}, until there occurs a first tangency with the set of those $y$'s such that $T-y\nsim_sT'-x-y$. The normal to the parabola at that point of tangency would not be parallel to $z$, leading to a $z(\tau',x,x'_0)$
not parallel with $z$. Furthermore, each boundary component of $cl(S)$ contains a point $y$ at which
$T-y\nsim_sT'-x-y$ (take $s=0$ and $t\in\{0,1\}$ in the definition of $S$). If not all points of $\partial S$ have this property, we are done by Lemma \ref{corner}. For any $x_0,x'_0\in\mathring{\tau}$, $S(\tau,x,x_0)$ and $S(\tau,x,x'_0)$ are either disjoint or equal; since tile interiors are connected, they must be equal. Hence $\mathring{\tau}\subset S(\tau,x,x_0)$ and the
boundary components of all the strips $S_{\tau,x}:=S(\tau,x,x_0)$ are $\epsilon$-characteristics for $T$
for any $\epsilon$ so that each tile contains an $\epsilon/2$-ball. Fix such an $\epsilon>0$.

For any fixed $x\ne 0$, the widths of the strips $S_{\tau,x}$ are uniformly bounded: otherwise, we could find $x_n$ so that $T-x_n-y \sim_sT-x-x_n-y$ for $y\in B_n(0)$, $T-x_n\to \bar{T}$, and $T-x-x_n\to\bar{T}-x$, and we would have $\bar{T}-y\sim_s\bar{T}-x-y$ for all $y$ so that $\bar{T}=\bar{T}-x$, by Lemma \ref{T=T'}, in contradiction to the non-periodicity of $\Omega$.
It follows that there must be an $r$ so that any strip of the form $S=\{sz+x_o-(1-t)rz^{\perp}+trz^{\perp}:s\in\R, 0<t<1\}$ contains a component of $\partial S_{\tau,x}$ for some $\tau$ and $x\ne 0$. Indeed, let
$P$ be a patch of $T$ that contains tiles $\tau$ and $\tau+x$ for some $x\ne0$. Then for each $w$
so that $P+w\subset T$, there is a strip $S_{\tau+w,x}$ that intersects $P+w$. Since the strips
$S_{\tau+w,x}$ have uniformly bounded widths and since there is an $R$ so that for every $y\in\R^2$ there is a $w$ with $spt(P+w)\subset B_R(y)$, we must have $r$ as desired.

Let $r$ be as above. It follows from Lemma \ref{finitely many cuts} (together with FLC) that $\{B_{2r}[T-y]:y\in \partial S_{\tau,x}$ for some $\tau\in T$ and $x\ne0$ with $\tau\in T-x\}$ is finite up to translation in the $z$ direction. Thus, for each boundary component $l$ of a strip $S_{\tau,x}$ there is a $y\in l$ and $w=w(l)\ne0$ parallel
with $z$ so that $B_{2r}[T-y]=B_{2r}[T-y+w]$. Furthermore, we can choose the $w(l)$ to have uniformly
bounded lengths (if $\{B_{2r}[T-y]:y\in \partial S_{\tau,x}$ for some $\tau\in T$ and $x\ne0$ with $\tau\in T-x\}$ has $m$ elements, up to translation in the $z$-direction, and the diameters of these patches are all less than d, then we can take $|w(l)|\le md$). It is then a consequence of finite local complexity that $\{w(l):l$ a component of $\partial S_{\tau,x},\tau\in T, \tau+x\in T,x\ne0\}$ is finite.

Now let $l_0$ be a boundary component of some $S_{\tau,x}$, let $y_0\in l_0$ be such that
$B_{2r}[T-y_0]=B_{2r}[T-y_0+w(l_0)]$ and let $P_0:=B_{2r}[T-y_0]+y_0$. Then $P_0$ and $P_0+w(l_0)$ are patches in $T$ and, as above for $\tau,x$, there is a strip $S'_0:=S_{P_0,w(l_0)}$ so that
$T-y\sim_sT-w(l_0)-y$ for all $y\in S'_0$. Also, $\{y:d(y,l_0)<2r\}\subset S'_0$. Now let $l_1$ be a boundary component of some $S_{\tau,x}$ with $y_1\in l_1$, $r<\langle y_1,z^{\perp}\rangle<2r$, and with
$B_{2r}[T-y_1]=B_{2r}[T-y_1+w(l_1)]$. Let  $P_1:=B_{2r}[T-y_1]+y_1$ and  let $S'_1:=S_{P_1,w(l_1)}$.
Continuing in this manner, we construct lines $l_n$ and strips $S'_n$, $n\in\Z$. Letting $S_n$ be the intersection of $S'_n$ and the strip between $l_{n-1}$ and $l_{n+1}$, and $w_n=w(l_n)$, the hypotheses of Lemma \ref{strips} are satisfied, but the conclusion of that lemma is contradicted, as $\{w_n\}$ finite.

Now suppose $\Omega$ has only branch line pairs and suppose that $(T,T')\in \mathcal{BP}(\Phi)$,
$v\in\partial S(T,T')$,  $t_0\in\R^+$, and $\epsilon>0$ are such that $T-t_0v\nsim_sT'-t_0v$ and $T-tv\sim_sT'-tv$ for $t_0<t<t_0+\epsilon$. There is a $k>0$ so that there are arbitrarily large $n$ such that
$(B_0[\Phi^n(T-t_0v)], B_0[\Phi^n(T-t_0v)])$ and $(B_0[\Phi^{n+k}(T-t_0v)], B_0[\Phi^{n+k}(T-t_0v)])$ are the same up to translation. As in previous arguments, this translation must be parallel to $v$ and, as soon as $\lambda^n\epsilon$ is greater than the diameter of any tile, this translation must be zero.
Fix such an $n$ and let $(\bar{T},\bar{T}'):=(\cup_{m\ge0}\Phi^{mk}(B_0[\Phi^n(T-t_0v)],\cup_{m\ge0}\Phi^{mk}(B_0[\Phi^n(T'-t_0v)])$. Then $(\bar{T},\bar{T'})$ is a periodic branch pair with $S(\bar{T},\bar{T}')\supset S(T,T')$. But $\bar{T}-tv\sim_s\bar{T}'-tv$ for $0<t<\epsilon$. It follows from the periodicity of
$(\bar{T},\bar{T}')$ that $\bar{T}-tv\sim_s\bar{T}'-tv$ for all $t>0$. Thus $v\in S(\bar{T},\bar{T}')$ so that $(\bar{T},\bar{T}')$ is an isolated or corner pair.
\hspace{\stretch{1}} $\square$
\\
\\
\noindent{\em Proof of Theorem \ref{structure of branch locus}}. Let us first suppose that the branch locus of $\Phi$ is non-collapsing. In this case $\mathcal{PBL}(\Phi)=\mathcal{BL}(\Phi)$ (Proposition \ref{PBL=BL}) and, if $(T,T')\in \mathcal{BP}(\Phi)$ and $v\in\tilde{\partial}S(T,T')$, then $(T-tv,T'-tv)\in\mathcal{BP}(\Phi)$ for all $t\ge 0$. Since $\mathcal{BP}(\Phi)$ is closed (Lemma \ref{branch pairs closed}), we see that $\mathcal{BL}(\Phi)=\{T:(T,T')\in\mathcal{BP}(\Phi)\}$ and $\mathcal{BL}(\Phi)$ is also closed. Fix  $v\in \tilde{\partial}S(T,T')$ and $u\in\mathbb{S}^1$ with $\{y\in\mathbb{S}^1:\langle y,u\rangle\,>0\}\subset S(T,T')$ for some $(T,T')\in\mathcal{BP}(\phi)$. Suppose that $\tau_1,\tau_2\in T$, $t_1,t_2\in\R^+$, and $x\in\R^2$ are such that $t_iv\in int(\tau_i)$ for $i=1,2$, $\tau_1+x=\tau_2$, and $B_0[T'-t_1v]+x=B_0[T'-t_2v]$. Since $B_0[T-t_iv]$ is stably related to $B_0[T'-t_iv]$ on $int(spt(B_0[T-t_iv])\cap spt(B_0[T'-t_iv]))\cap\{y\in\R^2:\langle y,u\rangle\,>0\}$, while  $B_0[T-t_iv]$ is  not stably related to $B_0[T'-t_iv]$ on $\{t_iv\}$ for $i=1,2$, it must be the case that $x$ is parallel with $v$. That is, the pairs $(\tau_1,l\cap\tau_1)$ and $(\tau_2,l\cap\tau_2)$ are the same up to translation, where $l=l_v:=\{tv:t\in\R\}$. Since there are only finitely many translationally inequivalent pairs of patches of the form $(B_0[\bar{T}],B_0[\bar{T}'])$ with $(\bar{T},\bar{T}')\in\mathcal{BP}({\Phi})$ (Lemma \ref{finitely many patches} - note that, in the above, $(T-t_iv,T'-t_iv)\in\mathcal{BP}({\Phi})$ from the non-collapsing assumption), and since the set $\{v:v\in\tilde{\partial} S(T,T'),(T,T')\in\mathcal{BP}({\Phi)}\}$ is finite (Theorem \ref{finitely many sectors}), the collection $\{[(\tau,\tau\cap l)]\}$ of translation equivalence classes of pairs $(\tau,\tau\cap l)$, where $0\in\tau\in\ T$ and $l=l_v$ for some $T$ with $(T,T')\in\mathcal{BP}({\Phi})$ and $v\in\tilde{\partial} S(T,T')$, is finite. By first collaring tiles if necessary, we may assume that $\Phi$ forces the border so that
the map $\hat{\pi}:\Omega_{\Phi}\to \hat{X}_{\Phi}:= \inv F_{\Phi}$ onto the inverse limit of the substitution map $F_{\Phi}$ on the Anderson-Putnam complex $X_{\Phi}$, is a homeomorphism (\cite{AP}). The 2-dimensional CW-complex $X_{\Phi}$ is the quotient of $\R^2$ formed from $T\in\Omega_{\Phi}$ by identifying $x_1$ and $x_2$ if there are $\tau_1$ and $\tau_2$ in $T$ with $x_i\in \tau_i$ and $\tau_2=\tau_1+(x_2-x_1)$. Equivalently, $X_{\Phi}:=\{[(\tau,x)]:x\in \tau,\tau\in\ T, T\in\Omega_{\Phi}\}/\sim$,
with $\sim$ the transitive closure of the relation defined by $[(\tau_1,x_1)]\sim[(\tau_2,x_2)]$ if there are $T\in\Omega_{\Phi}$,  $\bar{\tau}_1,\bar{\tau}_2\in T$, and $y_1,y_2$ so that $\bar{\tau}_i=\tau_i+y_i$, $i=1,2$, and $x_1+y_1=x_2+y_2$. With this description of $X_{\Phi}$, $\pi:\Omega_{\phi}\to X_{\Phi}$ is given by $\pi(T):=[[(\tau,0)]]$, where $0\in\tau\in T$ and $[[(*,*)]]$ denotes the $\sim$-equivalence class of the translational equivalence class of $(*,*)$. We see that $K:=\pi(\mathcal{BL}({\Phi}))$ is a 1-dimensional simplicial complex contained in the finite union of line segments $\{[[(\tau,x)]]:x\in\tau\cap l_v\}$, with $0\in\tau\in T$, $(T,T')\in\mathcal{BP}(\Phi)$, and $v\in \tilde{\partial} S(T,T')$. With $f:=F_{\Phi}\mid_{K}$, we have that $\hat{\pi}\mid_{\mathcal{BL}(\Phi)}$ maps $\mathcal{BL}(\Phi)$ is homeomorphically onto $\inv f$, conjugating $\Phi\mid_{\mathcal{BL}(\Phi)}$ with $\hat{f}$. The set $V$ of vertices of $K$ is defined as follows. Let $V_1:=\{\pi(T): (T,T')\in\mathcal{BP}(\Phi)$ is an isolated or corner pair$\}$: $V_1$ is finite by Theorem \ref{finitely many sectors}. Let $V_2:=\{[[(\tau,0)]]\in K:0$ is a vertex of $\tau\}$: $V_2$ is finite under the assumption that all tiles are polygons. Let $V_3:=\{[[(\tau,0)]]:0$ is in the interior of an edge $\eta$ of $\tau$, $\tau\in T, (T,T')\in\mathcal{BP}(\Phi)$, and there is $v\in\tilde{\partial} S(T,T')$ with $v$ not parallel with $\eta\}$: $V_3$ is the set of points where the finitely many line segments in $K$ meet an
edge of a polygonal face of $X_{\Phi}$ transversely, so $V_3$ is finite. Then $V:=V_1\cup V_2\cup V_3$
is finite and forward invariant under $f$. The conditions for $f:K\to K$ to be expanding Markov are easy to check.

Now assume that $\Phi$ is self-similar Pisot. We will argue that if $(T,T')\in\mathcal{PBP}(\Phi)$ and $v\in\tilde{\partial} S(T,T')$, then the ray $\{tv:t\ge0\}$ crosses the tiles of $T$ in only finitely many ways. The result then follows as above. Let $g:\Omega_{\Phi}\to\hat{\T}_A$ be geometric realization onto the maximal equi-continuous factor and let $\hat{F}_A:\hat{\T}_A\to\hat{\T}_A$ be the hyperbolic automorphism with $g\circ\Phi=\hat{F}_A\circ g$. (Here $A$ is the $2d\times 2d$ block diagonal matrix whose blocks are the companion matrix of the Pisot inflation factor $\lambda$, $d=deg(\lambda)$, $F_A$ is the associated hyperbolic toral endomorphism, and $\hat{F}_A$ is the shift on the inverse limit $\hat{\T}_A:=\inv F_A$.) The map $g$ also semiconjugates translation in $\Omega_{\Phi}$ with an $\R^2$ action on $\hat{\T}_A$ that consists of translation along the 2-dimensional unstable (under $\hat{F}_A$) leaves in $\hat{\T}_A$: we will use the notation $g(T)-x:=g(T-x)$ for this action. More explicitly in coordinates, the $\R^2$ action on $\hat{\T}_A$ is given by $(z_0,\ldots,z_n,\ldots)-x= (z_0-\tilde{x},\ldots,z_n-\lambda^{-n}\tilde{x},\ldots)$, where $x\mapsto \tilde{x}$ is an isomorphism of $\R^2$ with the unstable space $W^u_A$ of $F_A$. We will also use the key fact that $g$ is boundedly finite-to-one (\cite{BKS}).

Given $(T,T')\in\mathcal{PBP}(\Phi)$ and $v\in\tilde{\partial} S(T,T')$, we claim there is $t_0>0$ and $\bar{T}\in\Omega_{\Phi}$, with $\bar{T}$ $\Phi$-periodic, so that $T-t_0v\in W^s(\bar{T})$. Indeed, if there is $t_1>0$ so that $T-t_1v\sim_sT'-t_1v$, then let $t_0:=inf\{a: T-tv\sim_sT'-tv\, \forall t\in[a,t_1]\}$.
Then $t_0>0$ and $T-t_0v\nsim_sT'-t_0v$. Let $n\in\N$ be large and $k\in\N$ be such that $(B_0[\Phi^{n+k}(T-t_0v)], B_0[\Phi^{n+k}(T'-t_0v)])$ is the same as $(B_0[\Phi^{n}(T-t_0v)], B_0[\Phi^{n}(T'-t_0v)])$, up to translation. Such translation must be parallel with $v$ (as, for example, in the proof of Proposition \ref{periodic branch pairs exist}) and then must be 0, since $B_0[\Phi^{n+k}(T-t_0v)]$ and $B_0[\Phi^{n+k}(T'-t_0v)]$ as well as $B_0[\Phi^{n}(T-t_0v)]$ and $ B_0[\Phi^{n}(T'-t_0v)]$, are stably related on $\{tv:t\ge0\}$ intersected with the supports of the appropriate patches. That is, $(B_0[\Phi^{n+k}(T-t_0v)], B_0[\Phi^{n+k}(T'-t_0v)])=(B_0[\Phi^{n}(T-t_0v)], B_0[\Phi^{n}(T'-t_0v)])$ and $T-t_0v\in W^s(\bar{T})$, with $\bar{T}:=
\cup_{n\ge0}\Phi^{nk}(B_0[T-t_0v])$ $\Phi$-periodic of period $k$.
If, instead, $T-tv\nsim_sT'-tv$ for all $t\ge0$, there are only finitely many patches of the form $B_0[T-tv]$
for $t\ge0$, up to translation parallel to $v$ (as argued in the non-collapsing case above). Let $n$ be the
$\Phi$-period of $T$. There then are arbitrarily large $t_1$  and $s\in\R^+$, $m\in\N$ with $B_0[T-t_1v]+sv\subset\Phi^{mn}(B_0[T-t_1v])$. Let $t_0:=t_1+\frac{s}{\lambda^{mn}-1}$ and let $\bar{T}:=\cup_{l\ge0}\Phi^{lmn}(B_0[T-t_0v])$. Then $\bar{T}$ is $\Phi$-periodic and $T-t_0v\in W^s(\bar{T})$.

To simplify notation, assume that $T$ is fixed by $\Phi$. Let $t_0$ and $\bar{T}$ be as above with $\bar{T}$ periodic of period $k$ under $\Phi$. Let $L_v=L_v(T)$ denote the ray $L_v:=\{g(T-tv):t\ge0\}=\{g(T)-tv:t\ge0\}$ in $\hat{\T}_A$. Note that $g(T-\lambda^{nk}t_0v)=g(\Phi^{nk}(T-t_0v))=\hat{F}_A^{nk}(g(T-t_0v))\to g(\bar{T})$ as $n\to\infty$ so that $g(\bar{T})\in cl(L_v)$ and $L_v$ meets the $\epsilon$-stable manifold $W^s_{\epsilon}(g(\bar{T})):=\{z\in\hat{\T}:d(\hat{F}_A^n(z),\hat{F}_A^n(g(\bar{T})))\le\epsilon\,\forall n\ge0\}$ for each $\epsilon>0$. For each $n\in\N$, let $\pi_n:\hat{\T}\to \T^{2d}=\R^{2d}/\Z^{2d}$ be the projection of $\hat{T}_A=\inv F_A$ onto the $n$-th coordinate space. We will prove now that $\pi_0(cl(L_v))$ is a sub-torus of $\T^{2d}$ of dimension at most $d$. (In fact the dimension of $\pi_0
(cl(L_v))$ must then be exactly $d$, but we won't need that here.)

To see that this is the case, first note that the stable ($W^s_A$) and unstable ($W^u_A$) spaces of $A:\R^{2d}\to\R^{2d}$, at 0, have bases consisting of vectors with entries in the field $\Q(\lambda)$. ($W^u_A$ is spanned by a pair of eigenvectors for $A$ with eigenvalue $\lambda$, while $W^s_A$ is the space orthogonal to a space spanned by a pair of eigenvectors for $A^T$ with eigenvalue $\lambda$.)
Now, $g(T)-t_0v,g(T)-\lambda^kt_0v\in W^s(g(\bar{T}))$ means that the vector $w:=(\lambda^k-1)t_0\tilde{v}$ satisfies: (1)  $w-p\in W^s_A$ for some $p\in\Z^{2d}$. Also, since $\tilde{v}\in W^u_A$: (2) $w\in W^u_A$. Conditions (1) and (2) can be expressed as a system linear equations with coefficients in $\Q(\lambda)$; the unique solution $w=(w_1,\ldots,w_{2d})$ thus has all of its entries in $\Q(\lambda)$. The dimension of $\pi_0(cl(L_v))=cl(\{tw+\Z^{2d}:t\in\R\})$ equals the dimension over $\Q$ of the rational span of $\{w_1,\ldots,w_{2d}\}$. Since the dimension of $\Q(\lambda)$ over $\Q$ is $d$, the dimension of $\pi_0(cl(L_v))$ is at most $d$. The same argument clearly works for $\pi_n(cl(L_v))$ for all $n\ge0$, so $cl(L_v)$ is a $d'$-dimensional solenoid for some $d'\le d$.

We are going to show that, for small $\epsilon>0$, $W^s_{\epsilon}(g(\bar{T}))\cap cl(L_v)$ is a $d'-1$-dimensional cross-section to the $\R$-action $x\mapsto x-tv$ on the solenoid $cl(L_v)$, to which orbits must return with bounded gap. From this it will follow that there are $t'_i$ with $t'_{i+1}-t'_i$ bounded and with $t'_i\to\pm\infty$ as $i\to\pm\infty$ so that $\{B_0[T-t'_iv]\}$ is finite. An immediate consequence is that $\{tv:t\in\R\}$ intersects the tiles of $T$ in only finitely many ways.

There is $\delta=\delta(v)>0$ so that if $x+\Z^{2d},y+\Z^{2d}\in\pi_0(cl(L_v))$ and $|x-y|<\delta$, then $sx+(1-s)y+\Z^{2d}\in\pi_0(cl(L_v))$ for all $s\in\R$. We claim that for $\epsilon<\delta$, $B_{\epsilon/2}(\pi_0(g(\bar{T})))\cap\pi_0(cl(L_v))\subset\pi_0(W^s_\epsilon(g(\bar{T}))$. If this were not the case, then,
by the choice of $\delta$, $W^u_{\epsilon/2}(\pi_0(g(\bar{T})))\subset \pi_0(cl(L_v))$ and then $W^s(\pi_0(g(\bar{T})))\subset \pi_0(cl(L_v))$. But $W^s(\pi_0(g(\bar{T})))$ is dense in $\T^{2d}$ and  $\pi_0(cl(L_v))$ is a proper closed subset of $\T^{2d}$. Clearly, this $\delta$ works  for the corresponding statement in all coordinate projections $\pi_n$ simultaneously.

Let $\Delta_n(\epsilon)$ denote the local $\epsilon$ stable manifold at the point $\pi_n(g(\Phi^{-n}(\bar{T})))$ for the restriction of $F_A$ to the torus $\T:=\pi_0(cl(L_v))=\pi_n(cl(L_v))$. Then $\Delta_n(\epsilon/2)$ is contained in $W^s_{\epsilon}(\pi_n(g(\Phi^{-n}(\bar{T}))))$ for $\epsilon<\delta$ with $\delta$ depending only on $v$ (and not on $T$,  $\bar{T}$ or $n$). We saw above that the local unstable manifold $W^u_{\epsilon/2}(\pi_n(g(\Phi^{-n}(\bar{T}))))$ of $F_A$ meets $\T$ only along the arc $\pi_n(g(\Phi^{-n}(\bar{T})))-t\tilde{v}$, $|t|<\epsilon/2$, and it follows that $\Delta_n(\epsilon)$ is a co-dimension one disk in $\T$ transverse to the $\R$-action in the direction of $\tilde{v}$. Hence, orbits of the $\R$-action on $\T$ return to $\Delta_n(\epsilon/2)$ with bounded gap. (We really have a different $\R$-action on $\T=\pi_n(cl(L_v))$ for each $n$, namely $(y,t)\mapsto y-\lambda^{-n}t\tilde{v}$. We will take this into account.)

Now choose $\epsilon'$ small enough so that if $S,S'\in\Omega_{\Phi}$, $g^{-1}(\{g(S)\})=\{S_1,\ldots,S_m\}$, and $g(S')\in W^s_{\epsilon'}(g(S))$ then $B_0[S']=B_0[S_i]$ for some $i\in\{1,\ldots,m\}$. Let $n$ be large enough so that if $z,z'\in\hat{\T}_A$ are such that $\pi_n(z)=\pi_n(z')$ then $d(z,z')<\epsilon'/2$ and let $\epsilon>0$ be less that the $\delta$ above and also small enough so that if $d(\pi_n(z),\pi_n(z'))<\epsilon$ then $d(z,z')<\epsilon'$. There is then a $b$, depending only on $\epsilon$ and $v$, and $\{t_i':i\in\Z\}$ with $t'_i\to\pm\infty$ as $i\to\pm\infty$ and $0<t'_{i+1}-t'_i\le b$ for all $i\in\Z$, so that $\pi_n(g(T-t'_iv))=\pi_n(g(T))-t'_i\lambda^{-n}\tilde{v}\in\Delta_n(\epsilon/2)$ for all $i\in\Z$.
Let $g^{-1}(\{g(\bar{T})\})=\{\bar{T}_1,\ldots,\bar{T}_m\}$. We have that $B_0[T-t'_iv]\in\{B_0[\bar{T}_1],\ldots,B_0[\bar{T}_m]\}$ for all $i\in\Z$.

With $b$ as above, let $B$ be the maximum of the cardinalities of the collections $\{B_b[S']:S'\in\Omega_{\Phi}$ and $B_0[S']=B_0[S]\}$ over all $S\in\Omega_{\Phi}$. Finite local complexity insures that $B<\infty$. Let $M$ be the maximal cardinality of a fiber $g^{-1}(\{g(S)\})$ over all $S\in\Omega_{\Phi}$: $M<\infty$ by \cite{BKS}. The number of distinct translation equivalence classes of pairs
$(\tau,\tau\cap l)$ with $\tau\in T$, $l=\{tv:t\in\R\}$, and $l\cap\tau\ne\emptyset$, is bounded by $MB$ and
this bound depends only on $v$ and not on $T$ (provided, of course, that there is $T'$ with $(T,T')\in\mathcal{PBP}(\Phi)$ and $v\in\tilde{\partial} S(T,T')$).

We prove now that for a given $v\in\mathbb{S}^1$
there are only finitely many distinct sets $cl\{T-tv:t\in\R\}$ where $T$ varies over all members of periodic line pairs $(T,T')$ with $v\in\partial S(T,T')$. Indeed, there is a finite collection of pairs of patches
$\{(P_1,P_1'),\ldots,(P_n,P'_n)\}$ so that if $(T,T')$ is any line pair with $v\in \partial S(T,T')$ then $(B_0[T],B_0[T'])=(P_i-tv,P'_i-tv)$ for some $i\in\{1,\ldots,n\},t\in\R$. Suppose that $(T_1,T'_1)$ and $(T_2, T'_2)$ are periodic line pairs with $v\in\partial S(T_i,T'_i),i=1,2$ and $(B_0[T_1],B_0[T'_1])=(B_0[T_2]-sv,B_0[T'_2]-sv)$ for some $s\in\R$. Let $m$ be a common $\Phi$-period for $T_1$ and $T_2$.
Then $B_0[T_1]=B_0[T_2-sv]$ implies that $\Phi^{km}(T_2-sv)=T_2-\lambda^{km}sv\to T_1$ as $k\to\infty$. Thus, $T_1\in cl\{T_2-tv:t\in\R\}$. If $T_2-t_iv\to T_1$, then $T_2-(t_i+t)v\to T_1-tv$. Thus,
$cl\{T_1-tv:t\in\R\}\subset cl\{T_2-tv:t\in\R\}$. Interchanging 1 and 2 and replacing $s$ by $-s$ in the foregoing shows that $cl\{T_1-tv:t\in\R\}=cl\{T_2-tv:t\in\R\}$.

Note that if $(T,T')$ is a periodic line pair with $v\in\tilde{\partial} S(T,T')$ then $-v\in\tilde{\partial} S(T,T')$
so the image $\pi(cl(\{T-tv:t\in\R\}))$ in the Anderson-Putnam complex $X_{\Phi}$ equals $\cup_{w\in\tilde{\partial} S(T,T')} cl(\{T-tw:t\ge0\})$ and hence this latter set is a finite union of line segments in $X_{\Phi}$. As there are only finitely many sets $cl(\{T-tv:t\in\R\})$ for $(T,T')$ a periodic line pair and $v\in\tilde{\partial} S(T,T')$, there are only finitely many such $v$, and there are only finitely many corner and isolated pairs, $K:=\pi(\mathcal{PBP}(\Phi))$ is a finite union of line segments (some may be degenerate, if there are isolated pairs)
in $X_{\Phi}$. The description of $\mathcal{PBP}(\Phi)$ as $\inv (f:K\to K)$ proceeds as in the non-collapsing case.
\hspace{\stretch{1}} $\square$

\begin{lemma}\label{triple} Suppose that $\alpha_1$ and $\alpha_3$ are arcs on $\mathbb{S}^1$ with the property that $\alpha_1\cup\alpha_3$ contains a closed semi-circle in its interior. If there are distinct tilings $T_1,T_2,T_3\in\Omega_{\Phi}$ so that $T_i$ and $T_2$ are uniformly asymptotic in directions $\alpha_i$, $i=1,3$, then there is $w\in\R^2$ so that $T_2-w$ is $\Phi$-periodic.
\end{lemma}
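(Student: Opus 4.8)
The plan is to turn the uniform asymptoticity into genuine agreement on cones, and then to force $T_2$ to be substitution‑periodic by the same ``zoom in with $\Phi^{-n}$ and use recurrence'' mechanism that drives Propositions \ref{branch points} and \ref{periodic branch pairs exist}. The one genuinely new feature is that, as the direction varies over the semicircle, the asymptotic partner of $T_2$ may switch between $T_1$ and $T_3$; so the single‑pair arguments must be carried out for the triple $(T_1,T_2,T_3)$ at once. First I would record the geometry: by Lemma \ref{asymptotic directions} there is $R$ with $T_1,T_2$ agreeing on $\{tv:v\in\alpha_1,\ t\ge R\}$ and $T_3,T_2$ agreeing on $\{tv:v\in\alpha_3,\ t\ge R\}$, so all three agree on the overlap cone. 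Since a connected closed semicircle cannot sit in the interior of the union of two $\emph{disjoint}$ arcs, either (A) one of $\alpha_1,\alpha_3$ already contains a closed semicircle in its interior, or (B) $\alpha_1\cap\alpha_3\ne\emptyset$ and $\mathrm{int}(\alpha_1)\cup\mathrm{int}(\alpha_3)$ is a single arc containing a closed semicircle in its interior. Fix $u\in\mathbb{S}^1$ with $\{v:\langle v,u\rangle\ge0\}$ inside that interior.

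In Case (A), say $\alpha_1$ works. The agreement of $T_1,T_2$ on the cone over $\alpha_1$ contains, after translating by a suitable $-c_0u$, agreement of $T_2-c_0u$ and $T_1-c_0u$ on an honest half‑space $\{x:\langle x,u\rangle>-\delta\}$ with $\delta>0$ (once $\langle x,u\rangle+c_0>0$, the cone condition is automatic, which is what lets me take $-\delta<0$). Proposition \ref{branch points} with $\alpha=-\delta<0$ then yields a branch pair of the form $(T_2-c_0u-z,\,T_1-c_0u-z)$ whose asymptotic sector contains $\{v:\langle v,u\rangle>-\delta\}$, hence a closed semicircle in its interior; by Proposition \ref{periodic branch pairs exist} this pair is periodic, so $T_2-w$ with $w=c_0u+z$ is $\Phi$‑periodic, and Case (A) is finished.

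Case (B) is the substance, and here I would imitate Propositions \ref{branch points} and \ref{periodic branch pairs exist} for the triple. Put $T_i^{(n)}:=\Phi^{-n}(T_i)$. Because $\sim_s$ is $\Phi$‑invariant and $\Phi$ is self‑similar, the agreement cones persist as sets of directions while their apex radius shrinks like $\lambda^{-n}R$; thus for each $s>0$ and $v\in\mathrm{int}(\alpha_1)$ (resp.\ $\mathrm{int}(\alpha_3)$) one has $T_2^{(n)}-sv\sim_s T_1^{(n)}-sv$ (resp.\ $\sim_s T_3^{(n)}-sv$) for all large $n$. Since each of $(T_2^{(n)},T_1^{(n)})$ and $(T_2^{(n)},T_3^{(n)})$ is stably related on a fixed small ball lying inside the respective open cone, Lemma \ref{finitely many patch pairs} gives only finitely many translation types of the patch‑pairs $(B_r[T_2^{(n)}],B_r[T_1^{(n)}])$ and $(B_r[T_2^{(n)}],B_r[T_3^{(n)}])$, hence only finitely many types of patch‑triples $(B_r[T_1^{(n)}],B_r[T_2^{(n)}],B_r[T_3^{(n)}])$ (a triple is recovered from its two overlapping pairs). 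Passing to a convergent subsequence $n_i$ gives limits $\bar T_j=\lim T_j^{(n_i)}$, and running the contrapositive argument of Proposition \ref{branch points} verbatim — but reading ``$\sim_s\bar T_1$ $\emph{or}$ $\sim_s\bar T_3$'' for the partner, and using that $\nsim_s$ passes to limits (the remark after Lemma \ref{stable mfd}) together with Lemma \ref{T=T'} to keep $\bar T_1,\bar T_2,\bar T_3$ distinct — shows that $\bar T_2\nsim_s\bar T_1$ and $\bar T_2\nsim_s\bar T_3$ at the origin, while for $\emph{every}$ $x$ with $\langle x,u\rangle>0$ one has $\bar T_2-x\sim_s\bar T_1-x$ or $\bar T_2-x\sim_s\bar T_3-x$.

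Finally I would run the recurrence of Proposition \ref{periodic branch pairs exist} on the $\Phi$‑preimages of the limit triple. Finiteness of triple‑types produces, for any two exponents of the same type, a translation $x$ identifying them, and the crucial step — which I expect to be the main obstacle — is to show $x=0$. If $x\ne0$ then $x/|x|$ lies in the semicircle, hence in $\mathrm{int}(\alpha_1)\cup\mathrm{int}(\alpha_3)$, so in the direction of $x$ the limit triple is stably related on the half‑space side to whichever of $\bar T_1,\bar T_3$ is active there, whereas the translated triple carries the origin non‑relations $\bar T_2\nsim_s\bar T_1$, $\bar T_2\nsim_s\bar T_3$ to the point $\{x\}$; this forces $x\notin\{y:\langle y,u\rangle\ge0\}\setminus\{0\}$, and the symmetric statement (swapping the two exponents and replacing $x$ by $-x$) forces $-x$ out of the same set, so $x=0$. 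The difficulty is entirely in assembling the needed half‑space of $\emph{translation}$ directions from the two arcs and tracking the switching partner, since no single pair among $(T_2,T_1),(T_2,T_3)$ is itself a branch pair. With $x=0$, all like‑typed exponents give $\emph{equal}$ patches, so Lemma \ref{periodic tilings} makes $\bar T_1,\bar T_2,\bar T_3$ each $\Phi$‑periodic. As every subsequential limit of $(T_1^{(n)},T_2^{(n)},T_3^{(n)})$ is such a periodic triple and there are finitely many of each type, the $\alpha$‑limit set of $(T_1,T_2,T_3)$ under $\Phi\times\Phi\times\Phi$ is finite, hence a single periodic orbit; telescoping the patch‑translations exactly as in the ``moreover'' part of Proposition \ref{branch points} (where $x_n=\lambda x_{n+1}$) reconstructs $T_2=\Phi^{l}(\bar T_2)+w$ for some $l$ and $w$, whence $T_2-w$ is $\Phi$‑periodic.
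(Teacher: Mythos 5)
Your proposal is correct and takes essentially the same approach as the paper: pass to the $\Phi^{-n}$-preimages of the triple, use the agreement cones (Lemmas \ref{asymptotic directions} and \ref{existence of R}) together with FLC to get finitely many translation types of patch triples, take subsequential limits, and then run the recurrence/telescoping machinery of Propositions \ref{periodic branch pairs exist} and \ref{branch points} (the $\alpha<0$ case) adapted to the switching partner. Your Case (A)/(B) split and your explicit ``$x=0$'' argument for triples are just fleshed-out versions of what the paper compresses into the phrase ``just as in the proof of Proposition \ref{branch points} (the $\alpha<0$ case).''
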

\begin{proof} Let $T^n_i=\Phi^{-n}(T_i)$, $i=1,2,3$. By Lemmas \ref{asymptotic directions} and \ref{existence of R}, there is an $R$ so that $B_0[T^n_i-tv]=B_0[T^n_2-tv]$ for all $t\ge R$ and $v\in\alpha_i$, $i=1,3$. It follows that there are only finitely many triples of patches $(B_0[T^n_1],B_0[T^n_2], B_0[T^n_3])$, $n\ge0$, up to translation. We may then choose a subsequence and a $k\in\N$ so that the triples $(B_0[T^{n_j}_1],B_0[T^{n_j}_2], B_0[T^{n_j} _3])$ and $(B_0[T^{n_j+k}_1],B_0[T^{n_j+k}_2], B_0[T^{n_j+k}_3])$ are all translationally equivalent and (without loss of generality) all $n_i$ are divisible by $k$. By passing to a further subsequence, we may assume that $T^{n_j}_i\to\bar{T}_i\in\Omega_{\Phi}$ as $j\to\infty$ for $i=1,2,3$. Just as in the proof of Proposition \ref{branch points} (the ``$\alpha<0$" case) we may conclude that $\bar{T}_2$ is fixed by $\Phi^k$ and $\bar{T}_2=T_2-w$ for some $w$.
\end{proof}

\noindent{\em Proof of Theorem \ref{topological invariance 2}}. Let $\Omega=\Omega_{\Phi}$ and $\Omega'=\Omega_{\Psi}$. If $\Omega$ has an isolated branch pair or a corner branch pair, then by Lemma \ref{existence of R} such a pair is uniformly asymptotic in a set of directions containing a closed hemisphere in its interior and we're done by Theorem \ref{topological invariance 1}.

Let $h_0:\Omega_{\Phi}\to\Omega_{\Psi}$ be a homeomorphism for which there is a linear isomorphism $L$ of $\R^2$ so that $h_0(T-x)=h_0(T)-Lx$ for all $T\in\Omega_{\Phi}$ and all $x\in\R^2$ (Theorem 1.1 of \cite{Kwapisz}). Suppose that $(T_1,T_2)$ and $(T_2,T_3)$ are line branch pairs for $\Phi$ with
$\partial S(T_1,T_2)\ne\partial S(T_2,T_3)$. Then there is $w\in\R^2$ so that $T_2-w$ is $\Phi$-periodic
by Lemma \ref{triple} (in fact, it's not hard to show $w=0$ in this case). By the ``linearity" and uniform continuity of $h_0$, there are arcs $\alpha_i$ in $\mathbb{S}^1$, slightly smaller than the semi-circles
$\{Lv/|Lv|:v\in S(T_i,T_2)\}$, $i=1,3$ so that $\alpha_1\cup\alpha_3$ contains a closed semi-circle in its interior and so that $h_0(T_i)$ and $h_0(T_2)$ are uniformly asymptotic in directions $\alpha_i$, $i=1,3$. By Lemma \ref{triple}, there is $w'\in\R^2$ so that $h(T_2)-w'$ is $\Psi$-periodic. Let $h_1:\Omega_{\Phi}\to\Omega_{\Psi}$ by $h_1(T):=h_0(T+w)-w'$. Then $h_1$ is a homeomorphism that takes a $\Phi$-periodic orbit to a $\Psi$-periodic orbit and by Corollary 1.5 of \cite{Kwapisz}, there is a ``linear" homeomorphism $h:\Omega_{\Phi}\to\Omega_{\Psi}$ that conjugates some powers of $\Phi$ and $\Psi$. It follows easily that $h(\mathcal{BL}(\Phi))=\mathcal{BL}(\Psi)$ and $h(\mathcal{PBL}(\Phi))=\mathcal{PBL}(\Psi)$

Now suppose that $\Omega_{\Phi}$ is Pisot. We are done if $\Omega_{\Phi}$ has an isolated branch pair or a corner branch pair. Otherwise, there are are non-collapsing line branch pairs $(T_1,T'_1)$ and $(T_2,T'_2)$ with $\partial S(T_1,T'_1)\ne\partial S(T_2,T'_2)$ (Theorem \ref{two sectors}). Let $\partial
S(T_i,T'_i)=\{\pm v_i\}$, $i=1,2$, and let $g:\Omega_{\Phi}\to\hat{\T}_A$ be the map onto the maximal equicontinuous factor of the $\R^2$-action on $\Omega_{\Phi}$. Note that $g(T_i)=g(T'_i)$, $i=1,2$, since $T_i$ and $T'_i$ are proximal. Then $\cup_{t\in\R}W^s(g(T_i-tv_i))=
\cup_{t\in\R}W^s(g(T'_i-tv_i))$, $i=1,2$, are co-dimension one affine subspaces of $\hat{\T}_A$ that aren't
parallel (since $\{\pm v_1\}\ne\{\pm v_2\}$). Thus there are $t_1,t_2\in\R$ with $g(T_1-t_1v_1)=g(T'_1-t_1v_1)=g(T_2-t_2v_2)=g(T'_2-t_2v_2)$. It is proved in \cite{BKS} that, up to translation, there are only
finitely many sets of the form $\{B_0[T']:g(T')=g(T)\}$ for $T\in\Omega_{\Phi}$. Thus there are $n,k\in\N$
and $w\in\R^2$ so that $(B_0[\Phi^{n+k}(T_1-t_1v_1)],B_0[\Phi^{n+k}(T'_1-t_1v_1)],B_0[\Phi^{n+k}(T_2-t_2v_2)],B_0[\Phi^{n+k}(T'_2-t_2v_2)])=(B_0[\Phi^n(T_1-t_1v_1)]-w,B_0[\Phi^n(T'_1-t_1v_1)]-w,B_0[\Phi^n(T_2-t_2v_2)]-w,B_0[\Phi^n(T'_2-t_2v_2)-w])$. Equality of the first two coordinates of these quadruples implies that $w$ is parallel with $v_1$, while equality of the last two coordinates implies that
$w$ is parallel with $v_2$. Thus $w=0$ and the branch pairs $(\bar{T}_1,\bar{T}'_1)$ and $(\bar{T}_2,\bar{T}'_2)$ defined by $\bar{T}_i:=\cup_{m\ge0}\Phi^{mk}(B_0[\Phi^n(T_i-t_iv_i)]),\bar{T}'_i:=\cup_{m\ge0}\Phi^{mk}(B_0[\Phi^n(T'_i-t_iv_i)])$ are $\Phi$-periodic and $\partial S(\bar{T}_i,\bar{T}'_i)=\{\pm v_i\}$, $i=1,2$. Moreover, $g(\bar{T}_1)=g(\bar{T}'_1)=g(\bar{T}_2)=g(\bar{T}'_2)$ since $g$ is continuous,
$g\circ \hat{F}_A=\hat{F}_A\circ g$, and $\bar{T}_i=\lim_{m\to\infty}\Phi^{n+mk}(T_i-t_iv_i),\bar{T}'_i=\lim_{m\to\infty}\Phi^{n+mk}(T'_i-t_iv_i)$, $i=1,2$. Note that $\bar{T}_i\ne\bar{T}'_i$, $i=1,2$, by the non-collapsing condition satisfied by $(T_i,T'_i)$, $i=1,2$.

Now $\Omega_{\Psi}$ must also be Pisot. (For example, the linear nature of $h_0$ guarantees that if
$\phi$ is an eigenfuntion of the $\R^2$-action on $\Omega_{\Phi}$, then $\phi\circ h_0^{-1}$ is an eigenfunction of the $\R^2$-action on $\Omega_{\Psi}$, and these $\R^2$-actions have nontrivial eigenfunctions if and only if the inflations are Pisot, \cite{Sol}.) Also, the linearity of $h_0$ insures that $h_0$ preserves proximality. In particular, $g(S_1)=g(S'_1)=g(S_2)=g(S'_2)$ , where $g$ now denotes the map onto the maximal equi-continuous factor of the $\R^2$ action on $\Omega_{\Psi}$ and $S_i:=h_0(\bar{T}_i),S'_i:=h_0(\bar{T}'_i)$, $i=1,2$. Let $u_1\in S(\bar{T}_1,\bar{T}'_1)$ satisfy $\langle u_1,v_1\rangle=0$. Then for each $\delta>0$ there is $R=R(\delta)$ so that if $t>R$ then $d(\bar{T}_1-sv_1-tu_1,\bar{T}'_1-sv_1-tu_1)<\delta$ for all $s\in\R$ (Lemma \ref{existence of R}). Let $\epsilon>0$ be small enough so that if $S,S'\in\Omega_{\Psi}$ and $w\in\R^2$ are such that $d(S-tw,S'-tw)<\epsilon$ for all $t>0$ and $\lim_{t\to\infty} d(S-tw,S'-tw)=0$, then $B_0[S-tw]=B_0[S'-tw]$ for all $t>0$ (see the proof of Lemma \ref{asymptotic direction}). Now if $\delta>0$ is chosen so that $d(T,T')<\delta\Rightarrow d(h_0(T),h_0(T'))<\epsilon$, then, for $t>R(\delta)$, $B_0[S_1-sLv_1-tLu_1]=B_0[S'_1-sLv_1-tLu_1]$ for all $s\in\R$. Similarly for $S_2,S'_2$. Thus there is $w\in\R^2$ so that $S_i-w$ and $S'_i-w$ agree on
one of the components of $\R^2\setminus \{tLv_i:t\in\R\}$, $i=1,2$. For $n\in\N\cup \{0\}$, let $S_{n,i}:=\Psi^{-n}(S_i-w)$ and $S'_{n,i}:=\Psi^{-n}(S_i-w)$, $i=1,2$. Using the fact that there are only finitely many quadruples $(S_{n,1},S'_{n,1},S_{n,2},S'_{n,2})$, up to translation (for any $n$ the entries in the quadruple have the same image under $g$), and using that $Lv_1$ is not parallel with $Lv_2$ and $S_1\ne S'_1,S_2\ne S'_2$, we see,
exactly as in the proof of Proposition \ref{branch points}, that there is $w'\in\R^2$ so that $S_{0,i}-w'$ and $S'_{0,i}-w'$, $i=1,2$, are $\Psi$-periodic. Let $h_1:\Omega_{\Phi}\to\Omega_{\Psi}$ be given by $h_1(T):=h_0(T)-(w+w')$. Then $h_1$ is a homeomorphism that takes a $\Phi$-periodic orbit to a $\Psi$-periodic orbit. Corollary 1.5 of \cite{Kwapisz} supplies the desired $h:\Omega_{\Phi}\to\Omega_{\Psi}$, as above.
\hspace{\stretch{1}} $\square$

{\Small {\parindent=0pt Department of Mathematics, Montana State University,
Bozeman, MT 59717, USA \\
barge@math.montana.edu \\
\phantom{asdf} \\
Department of Mathematics, Southwest Minnesota State University, Marshall, MN 56258, USA \\
Carl.Olimb@smsu.edu
}}

\begin{thebibliography}{99}

\bibitem[A]{Aus} J.\ Auslander, Minimal flows and their extensions,
  {\em North-Holland Mathematical Studies}, vol. 153, North-Holland,
  Amsterdam, New York, Oxford, and Tokyo, (1988).

\bibitem[AP]{AP} J.E. Anderson and I.F. Putnam, Topological invariants
  for substitution tilings and their associated $C^*$-algebras, {\em
    Ergodic Theory \& Dynamical Systems} \textbf{18} (1998), 509--537.

\bibitem[BBK]{BBK} V.\ Baker, M.\ Barge and J.\ Kwapisz, Geometric
  realization and coincidence for reducible non-unimodular Pisot
  tiling spaces with an application to $\beta$-shifts,
  \textit{J. Instit. Fourier.}  {\bf56} (7) (2006), 2213-2248.

\bibitem[BD]{BD1} M.\ Barge and  B.\ Diamond, A complete invariant for the topology of one-demensional
	substitution tiling spaces, \textit{Ergod.
	Th. and Dynam. Sys.},{\bf 21} (2001), 1333-1358.

\bibitem[BD2]{bargediamond2} M. Barge and B. Diamond, Proximality In Pisot
  tiling spaces, \emph{Fund. Math.}, 194 (2007), 191-238.

\bibitem[BDH]{BDH} M.\ Barge, B.\ Diamond and C.\ Holton, Asymptotic orbits of primitive substitutions,
  \textit{Theoretical Computer Science}, {\bf301}(2003), 439-450.

\bibitem[BDHS]{BDHS} Cohomology of substitution tiling spaces, 2010 preprint.

\bibitem[BDS]{BDS} M.\ Barge, B.\ Diamond and R.\ Swanson, The branch locus in one-dimensional
  substitution tiling spaces, \textit{Fund. Math.},{\bf 204}(2009), 215-240.

\bibitem[BK]{BK} M.\ Barge and J.\ Kwapisz, Geometric theory of
  unimodular Pisot substitutions, \textit{Amer J. Math.} {\bf128}
  (2006), 1219-1282.
  
\bibitem[BKS]{BKS} M.\ Barge, J.\ Kellendonk and S.\ Shmieding, Maximal equicontinuous factors  
  for Pisot family tiling spaces, 2011 preprint.

\bibitem[BL]{BL} M.\ Boyle and D.\ Lind, Expansive subdynamics, \textit{Trans. Amer. Math. Soc.}
 {\bf  349}, no. 1, (1997), 55-102.

\bibitem[BS]{BS} M.\ Barge and R.\ Swanson, Rigidity in one-dimensional tiling spaces, \textit{Top. and Its Appl.},{\bf154}(17) (2007), 3095-3099.

\bibitem[CS]{CS} V.\ Canterini and A.\ Siegel, Geometric representation
  of substitutions of Pisot type, {\em Trans.\ Amer.\
    Math.\ Soc.} {\bf 353} (2001), 5121-5144.

\bibitem[FHK]{FHK} A.\ Forrest, J.\ Hunton and J.\ Kellendonk, Topological invariants for projection
  method patterns, with A.H. Forrest and J.R. Hunton,
  {\em Memoirs of the AMS}, {\bf159}(758), (2002).

\bibitem[G-S]{G-S} C.\ Goodman-Strauss, Matching Rules and Substitution Tilings,
   {\em Annals of Math.}, Second Series, Vol. 147, No. 1 (Jan., 1998), pp. 181-223.

\bibitem[KS]{KS} R. Kenyon and B. Solomyak, On the characterization of expansion maps for
 self-affine tilings, {\em Discrete Comp. Geom.} Online, 2009.

\bibitem[K]{Kwapisz} J.\ Kwapisz, Rigidity and mapping class group for abstract tiling spaces, 2010 preprint.

\bibitem[LS]{LS} J.-Y. Lee and B. Solomyak, Pisot family self-affine tilings, discrete spectrum,
  and the Meyer property, 2010 preprint.

\bibitem[M]{M} S.\ Mozes, Tilings, substitution systems and dynamical systems generated by them, {\em J. Analyse Math.} {\bf53}(1989), 139-186.

\bibitem[O]{O} C.\ Olimb, The branch locus for two dimensional substitution tiling spaces, Ph.D. Thesis,
  Montana State University (2010)

\bibitem[Sa]{Sa} L.\ Sadun, Topology of tiling spaces, {\em American Mathematical Society}, 2008.

\bibitem[So1]{Solo}Dynamics of self-similar tilings, {\em Ergod. Thy. and Dynam. Sys.} {\bf17} (1997), 695-738.

\bibitem[So2]{sol} B. Solomyak, Nonperiodicity implies unique
  composition for self-similar translationally finite tilings, {\em
    Discrete Comput. Geometry} {\bf 20} (1998), 265--279.
    
\bibitem[So3]{Sol} B.\ Solomyak, Eigenfunctions for substitution tiling systems,\emph{ Advanced Studies in Pure Mathematics}, \textbf{49} (2007), 433--454.

\bibitem[W]{W} R. F. Williams,  Classification of 1-dimensional
attractors, {\em Proc. Symp. Pure Math.} {\bf 14}(1970), 341-361.

\end{thebibliography}
\end{document}